\newcommand{\R}{\mathbb{R}}
\newcommand{\N}{\mathbb{N}}
\newcommand{\eexp}{\overrightarrow{\exp}}
\newcommand{\Lp}[2]{L^{#1}(#2)}
\newcommand{\gut}[2]{g_{#1}^{#2}}
\newcommand{\dgut}[2]{\dot g_{#1}^{#2}}
\newcommand{\Put}[2]{P_{#1}^{#2}}
\newcommand{\bt}{{\overline{t}}}
\newtheorem{thm}{Theorem}
\newtheorem{lemma}[thm]{Lemma}
\newtheorem{cor}[thm]{Corollary}
\newtheorem{prop}[thm]{Proposition}
\newtheorem*{thmi}{Theorem}
\newtheorem{defi}{Definition}
\theoremstyle{remark}
\newtheorem{remark}[]{Remark}
\newcommand{\be}{\begin{equation}}
\newcommand{\ee}{\end{equation}}
\title{Homotopically invisibile singular curves}
\author{Andrei A. Agrachev}
\address{SISSA (Trieste) and Steklov Institute (Moscow)}
\email{agrachev@sissa.it}
\author{Francesco Boarotto}
\address{SISSA (Trieste)}
\email{fboaro@sissa.it}
\author{Antonio Lerario}\address{SISSA (Trieste)}
\email{lerario@sissa.it}
\begin{document}
		
	\begin{abstract}
		Given a smooth manifold $M$ and a totally nonholonomic distribution $\Delta\subset TM$ of rank $d$, we study the effect of singular curves on the topology of the space of horizontal paths joining two points on $M$. Singular curves are critical points of the endpoint map $F:\gamma\mapsto\gamma(1)$ defined on the space $\Omega$ of horizontal paths starting at a fixed point $x$. We consider a subriemannian energy $J:\Omega(y)\to\mathbb R$, where $\Omega(y)=F^{-1}(y)$ is the space of horizontal paths connecting $x$ with $y$, and study those singular paths that do not influence the homotopy type of the Lebesgue sets $\{\gamma\in\Omega(y)\,|\,J(\gamma)\le E\}$. We call them {\it homotopically invisible}. It turns out that for $d\geq 3$ generic subriemannian structures in the sense of \cite{CJT} have only homotopically invisible singular curves.
		Our results can be seen as a first step for developing the calculus of variations on the singular space of horizontal curves (in this direction we prove a subriemannian Minimax principle and discuss some applications).
	\end{abstract}
	
	\maketitle
	
	\section{Introduction}\label{sec:Introduction}
	\subsection{Horizontal path spaces and singular curves}Let $M$ be a smooth manifold of dimension $m$  and $\Delta\subset TM$ be a smooth, totally nonholonomic distribution of rank $d$.
Given a point $x\in M$ (which we will assume fixed once and for all) the horizontal path space $\Omega$ of \emph{admissible} curves is defined\footnote{This definition requires the choice of an inner product in each fiber (a subriemannian structure on $\Delta$) in order to integrate the square of the norm of $\dot\gamma$, but the fact of being \emph{integrable} is independent of the chosen structure (we refer the reader to \cite{AgrachevBarilariBoscain, Montgomery} for more details).}  by:
\be \Omega=\{\gamma:[0,1]\to M\,|\, \gamma(0)=x, \,\textrm{$\gamma$ is absolutely continuous, $\dot{\gamma}\in \Delta$ a.e. and is $L^2$-integrable}\}.\ee
The $W^{1,2}$ topology endows $\Omega$ with a Hilbert manifold structure, locally modeled on $L^2(I, \R^d)$. 

The \emph{endpoint map} is the smooth map assigning to each curve its final point:
\be F:\Omega\to M,\quad F(\gamma)=\gamma(1).\ee
A \emph{singular} curve is a critical point of $F$.
Singular curves are at the core of nonholonomic geometry, but some natural questions about these curves remain open. 
In fact even the existence of a point $y \in M$ not joined by singular curves sorting from $x$ is still an open problem (the ``subriemannian Sard's conjecture'').

Given $y\in M$ we will denote by $\Omega(y)$ the set of horizontal curves joining $x$ and $y$:
\be \Omega(y)=F^{-1}(y), \quad y\in M.\ee
If $y$ is a regular value of $F$, then there are no singular curves between $x$ and $y$ and the space $\Omega(y)$ is a smooth Hilbert manifold. As we just said, the absence of singular curves can not be granted in general and the space $\Omega(y)$ might be very singular. Despite this fact, from a homotopy theory point of view $\Omega(y)$ still turns out to be a ``nice'' space.
\begin{center}
\begin{figure}
\scalebox{1} 
{
\begin{pspicture}(0,-2.6080468)(8.461895,2.6080468)
\psline[linewidth=0.04cm](0.0,-2.0303516)(8.24,-2.0103517)
\psdots[dotsize=0.12](1.8,-2.0303516)
\psdots[dotsize=0.12](5.38,-2.0303516)
\psbezier[linewidth=0.04](2.2875562,2.0696485)(2.16,0.70964843)(1.22,1.0000654)(1.24,0.12964843)(1.26,-0.7407686)(2.08,0.12964843)(2.46,-1.5830531)
\psbezier[linewidth=0.04](5.6,2.0496485)(5.82,0.53143615)(5.18402,-0.1021697)(4.78,-0.08353592)(4.37598,-0.06490214)(5.72,-0.025882289)(5.78,-1.3903515)
\usefont{T1}{ptm}{m}{n}
\rput(1.791455,-2.3453515){$y_1$}
\usefont{T1}{ptm}{m}{n}
\rput(5.411455,-2.3853517){$y_2$}
\usefont{T1}{ptm}{m}{n}
\rput(1.8214551,2.4146485){$\Omega(y_1)$}
\usefont{T1}{ptm}{m}{n}
\rput(5.401455,2.3946486){$\Omega(y_2)$}
\usefont{T1}{ptm}{m}{n}
\rput(7.681455,-1.6453515){$M$}
\psline[linewidth=0.04cm,arrowsize=0.05291667cm 2.0,arrowlength=1.4,arrowinset=0.4]{->}(7.66,1.9896485)(7.66,-0.19035156)
\usefont{T1}{ptm}{m}{n}
\rput(8.141455,1.0346484){$F$}
\end{pspicture}
}

\caption{A simplified picture of two fibers of the enpdoint map $F:\Omega\to M$. The path space $\Omega(y_2)$ is singular, but it is homotopy equivalent to $\Omega(y_1)$.}\label{fig:endpoint}
\end{figure}
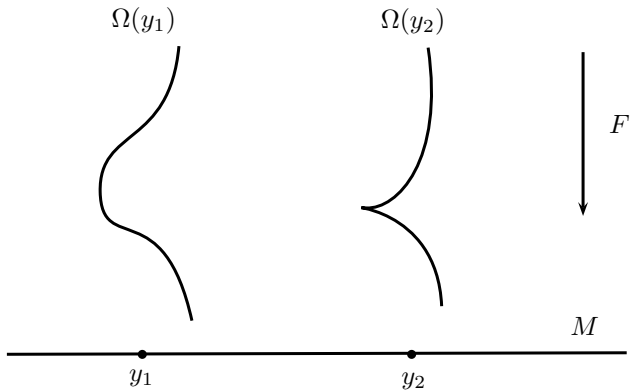
\end{center}
\begin{thmi}[Theorem 5, \cite{BoarottoLerario}] For every $y\in M$ the space $\Omega(y)$ has the homotopy type of a CW-complex and its inclusion in the standard path space (i.e. the space of $W^{1,2}$ curves on $M$ with no nonholonomic constraint on their velocity) is a homotopy equivalence. In particular all the spaces $\Omega(y)$ (regardless $y\in M$ being a regular value of $F$ or not) have the same homotopy type.
\end{thmi}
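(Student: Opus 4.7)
The plan is to compare the endpoint map $F:\Omega\to M$ with the classical evaluation $\mathrm{ev}_1:P(M,x)\to M$ on the full $W^{1,2}$ path space, exhibiting both as (quasi-)fibrations and then extracting the fiber-level statement from the long exact sequences of homotopy groups. As a starting observation, both $\Omega$ and $P(M,x)$ are contractible via the reparametrization $H_s(\gamma)(t)=\gamma(st)$, which is continuous in the $W^{1,2}$ topology and preserves horizontality since $\dot H_s(\gamma)(t)=s\dot\gamma(st)\in\Delta_{\gamma(st)}$. The classical evaluation $\mathrm{ev}_1$ is a Hurewicz fibration with fiber $P(M,x,y)$ homotopy equivalent to the loop space $\Omega M$, so the content of the theorem is to produce analogous local triviality structure for $F$ and transfer it across the inclusion $\iota:\Omega\hookrightarrow P(M,x)$.

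The main technical step is to show that around every point $z_0\in M$ there is a neighborhood $U$ together with a continuous family of horizontal curves $\{\beta_z\}_{z\in U}$ with $\beta_z(0)=z_0$, $\beta_z(1)=z$, and $\beta_{z_0}$ constant. This is a continuous local section of the endpoint map for paths based at $z_0$; the subriemannian structure enters here in an essential way because the linearization of the endpoint map at the constant curve covers only $\Delta_{z_0}\subsetneq T_{z_0}M$, so the implicit function theorem is unavailable. Instead one invokes Chow--Rashevskii: the family $\beta_z$ is constructed from iterated flows $e^{t_k X_{i_k}}\cdots e^{t_1 X_{i_1}}$ of a bracket-generating frame $X_1,\dots,X_d$ of $\Delta$, with the parameters $t_i(z)$ chosen as continuous functions of $z$ via a selection argument on a suitable finite-dimensional slice. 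With such a family in hand, the concatenation
\[
R:F^{-1}(U)\to\Omega(z_0),\qquad R(\gamma)=\gamma\ast\bar\beta_{\gamma(1)},
\]
where $\bar\beta_z$ is the reparametrized (still horizontal) reverse of $\beta_z$, together with the obvious homotopy $s\mapsto\gamma\ast\bar\beta^s_{\gamma(1)}$ that rescales the correction, exhibits the inclusion $\Omega(z_0)\hookrightarrow F^{-1}(U)$ as a homotopy equivalence. Hence $F$ is a quasi-fibration, and the same construction applied with arbitrary (not necessarily horizontal) families $\beta_z$ displays $\mathrm{ev}_1$ as a quasi-fibration compatibly with $\iota$.

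The conclusion then follows soft-analytically: $\iota$ is a weak equivalence on the contractible total spaces, so naturality of the two long exact sequences (equivalently, the five lemma applied to the resulting ladder of homotopy groups) forces the inclusion $\Omega(y)\hookrightarrow P(M,x,y)$ to induce isomorphisms on all $\pi_n$. For the CW-complex statement, $F^{-1}(U)$ is open in the Hilbert manifold $\Omega$ and hence has CW homotopy type by Palais; its retract $\Omega(y)$ is an ANR with the homotopy type of $P(M,x,y)\simeq\Omega M$, which is CW by Milnor, so $\Omega(y)$ is CW as well. Whitehead's theorem then promotes the weak equivalence on fibers to a genuine homotopy equivalence. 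The hard part is squarely the second paragraph, i.e.\ producing the continuous family $\{\beta_z\}$ of horizontal curves with prescribed endpoint near a $z_0$ through which the endpoint map may fail to be a submersion; this is precisely where the nonholonomic geometry must be exploited beyond the homotopy-theoretic formalism.
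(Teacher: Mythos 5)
Note first that the present paper does not prove this statement at all: it is imported verbatim as Theorem 5 of \cite{BoarottoLerario}, so the only meaningful comparison is with that reference. Your architecture is essentially the one used there: contractibility of both total spaces via $\gamma\mapsto\gamma(s\,\cdot)$, a fibration-type property of the two endpoint maps compatible with the inclusion $\Omega\hookrightarrow P(M,x)$, the five lemma applied to the resulting ladder of long exact sequences, and the upgrade from weak to genuine homotopy equivalence via CW homotopy type (domination by an open subset of the Hilbert manifold, Milnor/Palais) and Whitehead. Two small inaccuracies in the last step: $\Omega(y)$ is not a retract of $F^{-1}(U)$ but only a homotopy retract (the composite is concatenation with a short path, homotopic to the identity), and the ANR claim for the possibly singular set $\Omega(y)$ is unjustified --- but domination by a space of CW type already suffices, so this is cosmetic.

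The genuine gap is exactly where you place the weight: the continuous $W^{1,2}$ family $\{\beta_z\}_{z\in U}$ of horizontal curves with $\beta_{z_0}$ constant. Asserting that the flow-parameters $t_i(z)$ can be ``chosen continuously via a selection argument'' is not a proof: the composed-flows map is not a submersion at the origin, openness (Chow--Rashevskii) does not produce a continuous right inverse through a critical point, and Michael-type selection theorems fail because the fibers of that map are not convex; a continuous section degenerating to the constant curve at $z_0$ is essentially the hard analytic content of the theorem, so as written the argument is circularly incomplete at its core. The standard repair --- and in substance what \cite{BoarottoLerario} does --- is to drop the normalization $\beta_{z_0}=\mathrm{const}$: pick a control $\bar u$ steering $z_0$ to a nearby point $w$ at which the endpoint map is a submersion, apply the honest implicit function theorem to obtain a family of horizontal curves from $z$ to $w$ depending continuously on $z$, and use the concatenation $z_0\to w\to z$ as the correction; the superfluous loop appearing at $z=z_0$ is harmless because the cancellation homotopy shows the fiberwise comparison maps $\Omega(z)\to\Omega(z_0)$ are homotopy equivalences regardless. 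Relatedly, ``hence $F$ is a quasi-fibration'' does not follow from the local statement alone: the Dold--Thom criteria require quasifibration over finite intersections of the cover (or a global deformation with fiberwise equivalences), whereas the cleaner local-to-global passage is the one for Hurewicz/Dold fibrations over a paracompact base in the spirit of \cite{Hurewicz}, which is the route the cited proof takes.
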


Thus, globally the homotopy type of $\Omega(y)$, $y\in M$, is not influenced by the fact of being singular. In fact all fibers of the endpoint map can be continuously deformed one into another, but if an additional function $J:\Omega\to \R$ (an energy functional) is considered, during the deformation we cannot preserve the Lebesgue sets of $J$.
\subsection{Soft singular curves and homotopically invisible curves} In this framework, the main interest of calculus of variations is to determine the existence of critical points of a functional:
\be J:\Omega(y)\to \R.\ee
This is often done using the homotopy of the space $\Omega(y)$ (which in this case is known by the previous theorem) to force the existence of critical points.

In our case the role of $J$ will be played by a \emph{subriemannian Energy}. In other words, we fix an inner product on $\Delta$ smoothly depending on the base-point and define for $\gamma \in \Omega$:
\be J(\gamma)=\frac{1}{2}\int_{I}|\dot{\gamma}(t)|^2dt.\ee
If $y$ is a regular value of $F$, then $\Omega(y)$ is smooth and a critical point of $J$ is a curve $\gamma$ for which there exists a nonzero $\lambda\in T^*M$ such that:
\be \lambda d_{\gamma }F=d_\gamma J.\ee
In the language of subriemannian geometry these curves are called \emph{normal geodesics}; their short segments are length minimizers for the corresponding Carnot-Caratheodory distance on $M$ (not all length minimizers are normal geodesics though).

In the spirit of Morse theory, when $\Omega(y)$ is smooth, normal geodesics with energy in $[E_1, E_2]$ are precisely the obstruction to deform the Lebesgue set $\{J\leq E_2\}$ to $\{J\leq E_1\}$ following the gradient flow of $-J$ (if $y$ is a regular value of $F$, the function $J$ satisfies the Palais-Smale condition \cite[Proposition 10]{BoarottoLerario} and the classical theory can be used; we refer the reader to \cite{BoarottoLerario}).

If $\Omega(y)$ is singular, even if there are no normal geodesics with energy in $[E_1, E_2]$, the same deformation is in general not possible (in fact it is not even clear what a gradient flow should be on this singular space).  Neverthless, for a generic (in the sense of \cite{CJT}) subriemannian structure of rank $d\geq 3$ the absence of normal geodesics with energy in $[E_1, E_2]$ is enough to guarantee a \emph{weak} deformation. Denoting by $\Omega(y)^E$ the set $\{\gamma\in \Omega(y)\,|\,J(\gamma)\leq E\}$, the main result of this paper implies the following theorem.
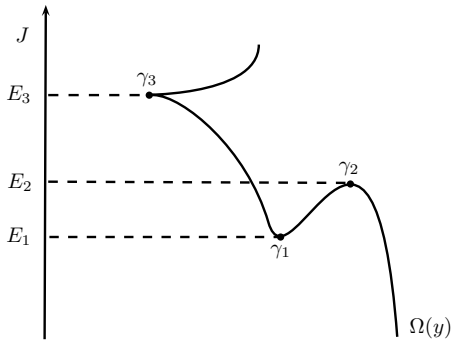
\begin{figure}
\scalebox{0.8} 
{
\begin{pspicture}(0,-2.8176954)(8.28291,2.8)
\psbezier[linewidth=0.04](4.4210157,2.12)(4.4210157,1.2)(2.2010157,1.3)(2.6810157,1.28)(3.1610155,1.26)(4.2154155,0.4)(4.5810156,-0.84)(4.9466157,-2.08)(6.3319345,2.4)(6.7210155,-2.74)
\psdots[dotsize=0.12](2.6010156,1.28)
\psdots[dotsize=0.12](4.7810154,-1.08)
\psdots[dotsize=0.12](5.9410157,-0.2)
\psline[linewidth=0.04cm,arrowsize=0.05291667cm 2.0,arrowlength=1.4,arrowinset=0.4]{->}(0.8610156,-2.78)(0.8810156,2.78)
\psline[linewidth=0.04cm,linestyle=dashed,dash=0.16cm 0.16cm](2.4410157,1.28)(0.9210156,1.28)
\psline[linewidth=0.04cm,linestyle=dashed,dash=0.16cm 0.16cm](5.861016,-0.18)(0.9210156,-0.16)
\psline[linewidth=0.04cm,linestyle=dashed,dash=0.16cm 0.16cm](4.821016,-1.08)(0.9410156,-1.08)
\usefont{T1}{ptm}{m}{n}
\rput(0.4924707,2.285){$J$}
\usefont{T1}{ptm}{m}{n}
\rput(2.5624707,1.545){$\gamma_3$}
\usefont{T1}{ptm}{m}{n}
\rput(5.9224706,0.065){$\gamma_2$}
\usefont{T1}{ptm}{m}{n}
\rput(4.7824707,-1.315){$\gamma_1$}
\usefont{T1}{ptm}{m}{n}
\rput(0.4324707,1.285){$E_3$}
\usefont{T1}{ptm}{m}{n}
\rput(0.4524707,-0.175){$E_2$}
\usefont{T1}{ptm}{m}{n}
\rput(0.4324707,-1.075){$E_1$}
\usefont{T1}{ptm}{m}{n}
\rput(7.2824707,-2.595){$\Omega(y)$}
\end{pspicture}
}

\caption{A simplified picture of a fiber $\Omega(y)$ and the Energy function (in the vertical direction). The curves $\gamma_1$ and $\gamma_2$ represent normal geodesics (they are obstructions to deform the Lebesgue sets). The curve $\gamma_3$ represents a soft singular curve: any cycle $X\subset \Omega(y)^{E_3}$ can be deformed a bit below the level $E_3.$  }\label{fig:critical}

\end{figure}

\begin{thm}[subriemannian Deformation Lemma]\label{thm:deformationintro}For a generic subriemannian structure of rank $d\geq 3$ on $M$ the following statement is true.  Assume there are no normal geodesics in $\Omega(y)$ with energy in $[E_1, E_2]$. Then for every compact manifold $X$, every continuous map $f:X\to \Omega(y)^{E_2}$ and every $\epsilon>0$ there exists a homotopy $f_t:X\to \Omega(y)^{E_2}$ such that:
\be f_0=f\quad \textrm{and}\quad f_1(X)\subset \Omega(y)^{E_1+\epsilon}.\ee
In particular $\Omega(y)^{E_2}$ and $\Omega(y)^{E_1+\epsilon}$ are weakly homotopy equivalent.
\end{thm}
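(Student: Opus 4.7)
The plan is to combine a classical Palais--Smale-type deformation on the smooth locus of $\Omega(y)$ with a preliminary perturbation that pushes the cycle $f(X)$ off the singular locus, using the homotopic invisibility of singular curves, which is presumably the paper's main technical result.

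First, let $\Sigma\subset\Omega(y)$ denote the set of singular curves ending at $y$. On the complement $\Omega(y)\setminus\Sigma$ the endpoint map is a submersion, so $\Omega(y)\setminus\Sigma$ is a smooth Hilbert submanifold of $\Omega$, on which the restriction of $J$ is smooth and satisfies the Palais--Smale condition by \cite[Proposition 10]{BoarottoLerario}. The hypothesis that there are no normal geodesics with energy in $[E_1,E_2]$ says exactly that $J|_{\Omega(y)\setminus\Sigma}$ has no critical points at these levels. A standard pseudo-gradient flow argument then produces, for any compact $K\subset(\Omega(y)\setminus\Sigma)^{E_2}$, a continuous homotopy pushing $K$ into $(\Omega(y)\setminus\Sigma)^{E_1+\epsilon}$ through sublevel sets of $J$.

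The heart of the matter is to deform $f$, \emph{before} running this flow, so that its image avoids $\Sigma$. This is where the hypothesis $d\geq 3$ and the genericity in the sense of \cite{CJT} must enter via the homotopic invisibility of singular curves. Concretely, for each singular curve $\gamma_*\in\Sigma\cap f(X)$ one should exhibit a local ``bypass'': a continuous family of horizontal curves, parametrised by a neighbourhood of $\gamma_*$ in $\Omega(y)$, landing in $\Omega(y)\setminus\Sigma$ with energy increased by at most $\epsilon/2$. The construction of the bypass should rest on a second-order analysis of $F$ at $\gamma_*$: under the generic rank assumptions the image of the second variation $d^2_{\gamma_*}F$ restricted to $\ker d_{\gamma_*}F$ should span the directions transverse to $\operatorname{im} d_{\gamma_*}F$ in $T_yM$, allowing an $L^2$-correction of order $\sqrt{\epsilon}$ that relands at $y$ while costing only $O(\epsilon)$ of energy. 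A compactness argument on $f(X)$ together with a partition of unity would then paste these local bypasses into a global homotopy $f'_t:X\to\Omega(y)^{E_2}$ with $f'_0=f$ and $f'_1(X)\cap\Sigma=\emptyset$. Composing $f'_t$ with the deformation of the previous paragraph yields the required $f_t$, and weak homotopy equivalence $\Omega(y)^{E_2}\simeq\Omega(y)^{E_1+\epsilon}$ follows by applying this construction to all continuous maps from compact manifolds.

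The main obstacle is plainly the construction of the bypass in a family. At a single $\gamma_*$ this is essentially a quantitative implicit function theorem for the smooth map $F$ near a critical point; but in our situation $\gamma_*$ varies in a compact family of possibly very different singularity types (varying corank of $d_{\gamma_*}F$, varying Goh data, varying order of abnormality), and the bypass must depend continuously on this data. I expect the rank condition $d\geq 3$ and the genericity of \cite{CJT} to be used precisely to stratify $\Sigma$ by corank and control the codimension of each stratum -- in particular to exclude strictly abnormal curves of corank $\geq 2$ generically -- so that the bypass can be built stratum by stratum and continuously patched. Without this, one is exposed to the rank-$2$ Martinet-type obstructions, where singular curves are honest length minimisers that genuinely obstruct the deformation and the conclusion fails.
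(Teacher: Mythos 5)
Your architecture --- (1) push $f(X)$ off the singular set $\Sigma$ once, (2) then run a Palais--Smale pseudo-gradient flow on the smooth locus $\Omega(y)\setminus\Sigma$ down to level $E_1+\epsilon$ --- has two genuine gaps, and it is not the route the paper takes. First, the Palais--Smale property you invoke is not available on $\Omega(y)\setminus\Sigma$: \cite[Proposition 10]{BoarottoLerario} concerns regular fibers, and on the open smooth locus of a singular fiber both PS sequences and the flow lines of $-\nabla(J|_{\Omega(y)\setminus\Sigma})$ can accumulate on $\Sigma$, where the constrained gradient degenerates even though there are no normal geodesics in $[E_1,E_2]$; the flow can stall or leave the smooth locus in finite time. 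The paper avoids exactly this by working in the global chart of controls, separating the abnormal set $\mathcal{A}^E$ from its complement by \emph{weak} neighborhoods (weak compactness of $\mathcal{A}^E$ plus Lemma \ref{lemma:wdist} give a uniform strong distance $\beta$), and only flowing for a quantified short time $t=\eta\beta/2$ on $\mathcal{W}(\mathcal{B}^E)$, where a uniform gradient bound holds (Corollaries \ref{cor:PSL} and \ref{cor:cordefuniform}). Second, a one-shot bypass followed by a global flow cannot work even granting the flow: singular curves occur at every energy level in the strip $[E_1+\epsilon,E_2]$ (not only at the level of $f(X)$), and the descending flow has no reason to avoid them at lower levels. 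One must interleave, at all levels, a local descent mechanism near singular curves with the flow away from them; the paper does this by proving that $J$ restricted to $\mathcal{U}(y)^{E_2}_{E_1+\epsilon}$ is a Serre fibration (Theorem \ref{thm:serre}) and lifting the downward homotopy cell by cell, with uniform step size controlled by the equicontinuous family of sections from Theorem \ref{thm:implicit}.

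Relatedly, what the soft hypotheses (corank one, non-Goh, strictly abnormal) buy in the paper is not a way to move \emph{off} $\Sigma$, but a nonsmooth implicit function theorem: near a soft abnormal control the extended endpoint map $(F,J)$ admits a continuous local section $\sigma_{u_0}$ (built from the cross-section of Section \ref{sec:cross}, where the non-Goh condition supplies the sign-definite quadratic directions $v^{\pm}$), so one can decrease the energy by a prescribed amount while staying \emph{inside} the fiber $F^{-1}(y)$, right through the singular curve. There is also no stratification by corank to patch: generically all singular curves are already corank one, and uniformity in the family is obtained by weak compactness of $\mathcal{A}^E$ and a finite subcover, not stratum by stratum. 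Finally, your last claim --- that the weak homotopy equivalence of $\Omega(y)^{E_2}$ and $\Omega(y)^{E_1+\epsilon}$ "follows by applying the construction to all maps from compact manifolds" --- only gives surjectivity on homotopy groups; injectivity needs a relative statement (deforming homotopies rel their ends), which is exactly what the Serre fibration property provides and what your plan does not.
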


The previous theorem says that, in the generic situation, singular curves with energy $E_1\leq J\leq E_2$ aren't obstacles for the deformation of continuous maps (see Figure \ref{fig:critical}). They are ``homotopically invisible''.

The technical condition that we need in order to guarantee the conclusion of the statement is that all singular curves with $J\leq E_2$ satisfy the following three properties: (a) they have corank one, (b) they are not Goh and (c) they are strictly abnormal; when $d\geq 3$ subriemannian structures whose all singular curves satisfy these conditions form an  open dense set in the $C^{\infty}$-Whitney topology by a result of Y. Chitour, F. Jean, and E. Tr\'elat \cite{CJT} (see Theorem \ref{thm:CJT} below for a more precise statement).

We call a singular curve satisfying conditions (a), (b) and (c) a \emph{soft} singular curve; 
the message of this paper is to show that 
soft singular curves are homotopically invisible.

\subsection{The calculus of variations on the horizontal path space}Theorem \ref{thm:deformationintro} above can be considered as the starting point for the variational analysis on the space of horizontal curves. As a matter of fact a \emph{strong} deformation retraction of Lebesgue sets is not needed even in the classical theory: all one needs is to be able to deform continuous maps (and more generally singular chains representing homology classes: we will prove that this is possible in Section \ref{sec:applications}). Neither one needs to deform up to the level $E_1$ included: the possibility of getting arbitrarily close (with the Energy) is still enough to use the results and predict the existence of critical points (i.e. normal geodesics).

If two functions $f, g:X\to Y$  between topological spaces are homotopic, we will write $f\sim g.$ The following statement is a subriemannian version of the Minimax principle.

\begin{thm}[subriemannian Minimax principle]\label{thm:minmax} 
For a generic subriemannian structure of rank $d\geq 3$ on $M$ the following statement is true. 
Let $X$ be a compact manifold and $f:X\to \Omega(y)$ be a continuous map. Consider:
\be c=\inf_{g\sim f}\sup_{\theta \in X} J(g(\theta)).\ee
Then for every $\epsilon>0$ there exists a normal geodesic $\gamma_\epsilon$ such that:
\be c-\epsilon\leq J(\gamma_\epsilon)\leq c+\epsilon.\ee

\end{thm}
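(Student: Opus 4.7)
The plan is the standard contradiction argument of Minimax theory, now using Theorem \ref{thm:deformationintro} in place of the classical deformation lemma. Suppose the conclusion fails: then there exists $\epsilon_0>0$ such that no normal geodesic $\gamma$ satisfies $c-\epsilon_0\leq J(\gamma)\leq c+\epsilon_0$. I will derive a contradiction by producing a map homotopic to $f$ whose maximum energy is strictly less than $c$.

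By the definition of $c$ as an infimum, I first pick a competitor $g\sim f$ with
\be
\sup_{\theta\in X} J(g(\theta))\leq c+\epsilon_0,
\ee
so that $g$ can be viewed as a continuous map $g:X\to \Omega(y)^{c+\epsilon_0}$. Since the subriemannian structure is generic of rank $d\geq 3$, and by our standing assumption there are no normal geodesics with energy in $[c-\epsilon_0,c+\epsilon_0]$, I can apply Theorem \ref{thm:deformationintro} with $E_1=c-\epsilon_0$, $E_2=c+\epsilon_0$ and with the parameter $\epsilon=\epsilon_0/2$. This yields a homotopy $g_t:X\to \Omega(y)^{E_2}$ with $g_0=g$ and
\be
g_1(X)\subset \Omega(y)^{E_1+\epsilon_0/2}=\Omega(y)^{c-\epsilon_0/2}.
\ee

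Since $g_t$ is a continuous family of maps into $\Omega(y)^{E_2}\subset \Omega(y)$, we have $g_1\sim g\sim f$ in $\Omega(y)$, so $g_1$ is admissible in the infimum defining $c$. But
\be
\sup_{\theta\in X} J(g_1(\theta))\leq c-\tfrac{\epsilon_0}{2}<c,
\ee
contradicting the definition of $c$. Hence the assumption must fail: for every $\epsilon>0$ there exists a normal geodesic $\gamma_\epsilon$ with $|J(\gamma_\epsilon)-c|\leq \epsilon$.

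The conceptual work of the theorem is entirely absorbed in Theorem \ref{thm:deformationintro}; the only point that requires any care is that the deformation supplied by that theorem takes place inside the singular space $\Omega(y)^{E_2}$ and not in the whole ambient path space, so one has to verify that the homotopy class $[f]\in [X,\Omega(y)]$ is preserved. This is immediate once we observe that $\Omega(y)^{E_2}$ is a subset of $\Omega(y)$ and the homotopy $g_t$ is continuous into $\Omega(y)$. No Palais–Smale-type compactness, no smoothness of $\Omega(y)$, and no pseudogradient flow are needed, because Theorem \ref{thm:deformationintro} already provides the only analytic ingredient in a form tailored to the singular setting.
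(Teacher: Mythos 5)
Your argument is correct and is essentially the paper's own proof: the paper likewise picks $g\sim f$ with $\sup_\theta J(g(\theta))\le c+\epsilon$ and applies its Deformation Lemma (Theorem \ref{thm:deformation}) to push $g$ below level $c-\epsilon/2$, contradicting the definition of $c$. The only cosmetic difference is that the paper's in-body statement adds the hypotheses $c>0$ and softness of singular curves up to level $c+\delta$ (handling the case $x=y$, where $c$ may vanish, in a separate proposition), while you invoke the generic form of Theorem \ref{thm:deformationintro} as a black box.
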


It should be clear at this point that in principle, using Theorem \ref{thm:deformationintro}, the powerful machinery of classical critical point theory can be adapted to the generic subriemannian framework -- to mention a specific example, we will show how to prove an analogue of Serre's theorem on the existence of infinitely many normal geodesics joining two points on a compact manifold (Corollary \ref{cor:serre} below).

\begin{remark}The study of the space of maps with some restrictions on their differential goes back to the works on immersions of S. Smale \cite{Smale1}; the case of trajectories of affine control systems were studied first by A. V.  Sarychev \cite{Sarychev} for the uniform convergence topology, by J. Dominy and H. Rabitz \cite{dynamic} for the $W^{1,1}$ topology and by the last two authors of this paper for the $W^{1,p}, p>1$ topology \cite{BoarottoLerario}. The case of \emph{closed} $W^{1,2}$ curves on nonholonomic distribution has been addressed by the last author and A. Mondino on \cite{LerarioMondino}. A subriemannian version of Serre's theorem was proved by the last two authors of the current paper under the assumption that $y\in M$ is a regular value of $F$ \cite{BoarottoLerario}. The paper \cite{LerarioMondino} also contains various related results on variational problems on the closed horizontal loop space in the contact case.
\end{remark}

\subsection{Structure of the paper}The paper is organized as follows. In Section \ref{sec:preliminaries} we recall the main definitions and properties of the objects we use. We note that Section \ref{sec:global} contains an interesting tool (the ``global chart'') which allows to switch from the space of curves to the space of controls (where the objects can be handled easier; we will switch back to the language of curves in the last section). Section \ref{sec:cross} contains the main technical idea, which is the construction of a \emph{cross-section} (a map ``parametrizing'' all possible values) for the pair $(F, J)$ near a singular soft curve. This is used in Section \ref{sec:implicit} to prove a quantitative \emph{non-smooth} implicit function theorem (Theorem \ref{thm:implicit}) near a soft singular curve. In Section \ref{sec:regular} we introduce the technical ingredients for handling the deformation on a singular space but away from singular curves. The subriemannian Deformation Lemma (Theorem \ref{thm:deformation}) is proved in Section \ref{sec:deformation}. Indeed the invisibility of soft singular curves is a corollary of the Serre fibration property (Theorem \ref{thm:serre}). In Section \ref{sec:applications} we discuss some applications and prove the subriemannian Minimax principle (Theorem \ref{thm:minmax}) and Serre's theorem on the existence of infinitely many normal geodesics on a compact subriemannian manifold (Corollary \ref{cor:serre}).
	
	\section{Preliminaries}\label{sec:preliminaries}
\subsection{The endpoint map}Recall that we are considering a smooth totally nonholonmic distribution $\Delta\subset TM$ and that we have denoted by $\Omega$ the space:
\be \Omega=\{\gamma:[0,1]\to M\,|\, \gamma(0)=x, \,\textrm{$\gamma$ is absolutely continuous, $\dot{\gamma}\in \Delta$ a.e. and is $L^2$-integrable}\}.\ee
This space endowed with the $W^{1,2}$ has a Hilbert manifold structure, locally modeled on $L^2(I, \R^d)$; we will call this topology the \emph{strong} topology -- the \emph{weak} topology can also be considered on $\Omega$ (see \cite{BoarottoLerario, LerarioMondino, Montgomery} for more details on these topologies).

The \emph{endpoint map} is the map that gives the final point of each curve:
\be F:\Omega\to M, \quad F(\gamma)=\gamma(1).\ee If $y\in M$, we indicate by: \be\Omega(y)=F^{-1}(y)=\{\textrm{horizontal curves joining $x$ and $y$}\}.\ee
We recall some properties of the Endpoint map, see \cite[Theorem 19 and Theorem 23]{BoarottoLerario}.
\begin{prop}\label{propo:contendp}The endpoint map $F:\Omega\to M$ is smooth (with respect to the Hilbert manifold structure on $\Omega$). Moreover if $\gamma_n\rightharpoonup\gamma$ weakly, then $\gamma_n\to \gamma$ uniformly (in particular $F$ is continuous for the weak topology) and $d_{\gamma_n}F\to d_{\gamma}F$ in the operator norm.\end{prop}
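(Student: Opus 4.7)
The plan is to work entirely in controls, using (for instance) the global chart announced in Section \ref{sec:global}: fix an orthonormal frame $X_1,\dots,X_d$ for $\Delta$ and identify each horizontal curve $\gamma\in\Omega$ with the control $u\in L^2(I,\R^d)$ such that $\dot\gamma=\sum_i u_iX_i(\gamma)$, $\gamma(0)=x$. Under this identification $F$ becomes the map $u\mapsto\gamma_u(1)$, and Proposition \ref{propo:contendp} decomposes into three statements about this evaluation map. For clarity I work locally: the target $M$ is replaced by an open set of $\R^m$ (a standard patching argument, together with the uniform convergence proved in the next step, handles the case where $\gamma$ leaves a single chart).

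\textbf{Smoothness.} This is a direct application of smooth dependence of ODE solutions on $L^2$-parameters. The solution map $u\mapsto\gamma_u$ from a bounded set of $L^2(I,\R^d)$ to $C(I,\R^m)$ is smooth whenever the vector fields $X_i$ are smooth, because the Picard iteration can be differentiated term by term and the remainder estimates are controlled by $\|u\|_{L^2}$ via Cauchy--Schwarz. Since evaluation at $t=1$ is bounded linear from $C(I,\R^m)$ to $\R^m$, smoothness of $F$ follows.

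\textbf{Weak convergence yields uniform convergence.} Let $u_n\rightharpoonup u$ in $L^2$. The $L^2$ norms $\|u_n\|_{L^2}$ are bounded, so the Cauchy--Schwarz estimate
\[
|\gamma_n(t)-\gamma_n(s)|\le \Big|\int_s^t\!\sum_i|u_i^n|\,|X_i(\gamma_n)|\Big|\le C\sqrt{|t-s|}
\]
shows that $(\gamma_n)$ is equicontinuous and uniformly bounded on some interval $[0,\tau]$; by Ascoli--Arzel\`a a subsequence converges uniformly to some $\tilde\gamma$. To identify $\tilde\gamma=\gamma_u$ we pass to the limit in
\[
\gamma_n(t)=x+\int_0^t\sum_i u_i^n(s)X_i(\gamma_n(s))\,ds,
\]
which is a pairing between the weakly convergent sequence $u_n$ and the uniformly (hence $L^2$-strongly) convergent sequence $X_i(\gamma_n)$. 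Uniqueness of the limit promotes the subsequential convergence to convergence of the whole sequence, and a standard bootstrap extends it to the full interval $[0,1]$. Continuity of $F$ for the weak topology is then the value at $t=1$.

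\textbf{Derivative convergence in operator norm.} For a control $u$, the differential $d_uF:L^2(I,\R^d)\to\R^m$ admits the explicit representation
\[
d_uF(v)=\int_0^1 \Phi_u(1,s)\,B_u(s)\,v(s)\,ds,
\]
where $B_u(s)$ is the matrix whose columns are $X_i(\gamma_u(s))$ and $\Phi_u(t,s)$ is the fundamental matrix of the linearized system $\dot y=A_u(t)y$ with $A_u(t)=\sum_i u_i(t)\,DX_i(\gamma_u(t))$. Because the target is finite-dimensional the operator norm of $d_uF$ coincides (up to a fixed constant) with the $L^2$-norm of the kernel $W_u(s)=\Phi_u(1,s)B_u(s)$. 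From Step 3, $\gamma_n\to\gamma$ uniformly, hence $B_n\to B$ uniformly. To obtain $\Phi_n\to\Phi$ uniformly one repeats the Ascoli--Arzel\`a plus weak-strong passage argument of Step 3 on the linear ODE defining $\Phi_n$ (the linearity makes this even easier: an $L^2$ bound on $u_n$ gives a uniform $L^1$ bound on $A_n$, hence a Gronwall-type equicontinuity of $\Phi_n$, and the ODE limit uses $u_n\rightharpoonup u$ paired with the uniformly convergent $DX_i(\gamma_n)\Phi_n$). Uniform convergence of $\Phi_n$ and $B_n$ yields uniform, hence $L^2$, convergence of the kernels $W_n\to W_u$, which is exactly operator norm convergence $d_{u_n}F\to d_uF$.

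The only genuinely delicate step is the last one: extracting operator norm (not just strong) convergence of the differentials from \emph{weak} convergence of controls. The crux is that the differential is an integral operator whose kernel is built from the uniformly convergent trajectory and its uniformly convergent linearized flow; weak convergence of $u_n$ enters only to justify the limit of the linearized equation, and once that is done the compactness of the target automatically upgrades the conclusion to operator norm convergence.
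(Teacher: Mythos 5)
Your argument is correct and follows essentially the same route as the source the paper relies on: Proposition \ref{propo:contendp} is not proved in this paper but quoted from \cite{BoarottoLerario} (Theorems 19 and 23), whose proofs likewise pass to controls, get uniform convergence from Ascoli--Arzel\`a plus the weak--strong pairing in the integral equation, and obtain operator-norm convergence of $d_{u_n}F$ from the fundamental-matrix representation of the differential, whose kernel converges uniformly (hence in $L^2$, which for these finite-rank operators is equivalent to operator-norm convergence). The only cosmetic caveat is that a global orthonormal rank-$d$ frame need not exist, so the reduction should be phrased via local frames or the generating family $X_1,\dots,X_l$, $l\ge d$, of Section \ref{sec:global}, exactly as your patching remark already indicates.
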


An horizontal (admissible) curve $\gamma$ is said to be \emph{singular} if it is a critical point of the endpoint map. The codimension of the singularity is called the \emph{corank} of the singular curve.

\subsection{Abnormal extremals}
Let us consider the cotangent bundle $T^*M$, and let us fix on it the standard symplectic structure $\omega$. It is possible to find a distinguished subspace $\Delta^\perp$ within $T^*M$, called the annihilator of the distribution $\Delta$, and accordingly we may restrict $\omega$ to a two-form $\overline{\omega}$ on $\Delta^\perp$, which may now have characteristics.

An absolutely continuous curve $\lambda:[0,1]\to \Delta^\perp$ is an \emph{abnormal} extremal of $\Delta$ if $\dot{\lambda}(t)\in\ker\overline{\omega}(\lambda(t))$ for any $t\in[0,1]$.

There exists a remarkable connection between singular curves (which are defined on the manifold $M$), and abnormal extremals (on $T^*M$), that is that an horizontal curve $\gamma$ is singular if and only if it is the projection of an abnormal extremal $\lambda$ on the cotangent space. If this is the case, we say that $\lambda$ is an \emph{abnormal extremal lift} of $\gamma$.

\subsection{The Energy and the extended endpoint map} A subriemannian structure
on $(M,\Delta)$ is a riemannian metric on $\Delta$, i.e. a scalar product on $\Delta$ which smoothly depends on
the base point. If $\Delta$ is endowed with a subriemannian structure, we can define the \emph{Energy} of horizontal paths: \be J:\Omega\to\R,\quad J(\gamma)=\int_0^1 |\dot{\gamma}(t)|^2dt.\ee

The Energy is a smooth (hence continuous) map on $\Omega$, but it is only lower semicontinous with respect to the weak topology. Throughout this paper we will use the shorthand notation: \be\Omega^E=\{\gamma\in\Omega\,|\,J(\gamma)\leq E\}.\ee

The \emph{extended} endpoint map  $\varphi:\Omega\to M\times \R$ is defined by (notice that it depends on the choice of the subriemannian structure on $\Delta$):
\be \gamma\mapsto (F(\gamma), J(\gamma)).\ee

\subsection{Normal extremals} Once we have fixed a riemannian metric $g$ on $\Delta$, we can define the subriemannian hamiltonian $H:T^*M\to\R$ as follows.  We require that for every $x\in M$, the restriction of $H$ to the fiber $T^*_xM$ coincides with the nonnegative quadratic form
\be
	\lambda\mapsto\frac12\max\left\{\frac{\langle\lambda,v\rangle^2}{g_q(v,v)}\,\bigg|\, v\in\Delta_x\setminus\{0\}\right\}.
\ee
We call \emph{normal extremal} any integral curve of the vector field\footnote{We recall that $\overrightarrow{H}$ is defined by the equation $\iota_{\overrightarrow{H}}\omega=-dH$} $\overrightarrow{H}$, that is any curve $\lambda:[0,1]\to T^*M$ such that $\dot{\lambda}(t)=\overrightarrow{H}(\lambda(t))$. The Pontryagin maximum principle \cite{AgrachevSarychev} states that a necessary condition for an horizontal curve to be locally minimizing is to be either the projection of a normal or an abnormal extremal (the two possibilities are not mutually exclusive in principle); accordingly, a singular curve $\gamma$ which is not the projection of a normal extremal will be said \emph{strictly abnormal}.
\subsection{The Goh condition} Let us consider a singular curve $\gamma:I\to M$, and let $\lambda:I\to T^*M$ an abnormal lift for $\gamma$. We say that $\gamma$ is a \emph{Goh} singular curve if $\lambda(t)\in \left(\Delta_{\gamma(t)}^2\right)^\perp$. A detailed discussion of the Goh condition can be found in \cite[Chapter 11]{AgrachevBarilariBoscain}; here we just briefly recall the salient facts. Since $\gamma$ is a critical point of the endpoint map, there is a well-defined map (the \emph{Hessian}) such that: \be \textrm{Hess}_\gamma F:\ker d_\gamma F\to \textrm{coker}\, d_\gamma F=T_{F(\gamma)}M/\textrm{im}\,d_\gamma F.\ee If we precompose the Hessian with $\lambda=\lambda(1)\in T^*_{F(\gamma)}M$ what we get is a real-valued quadratic form defined on $\ker d_\gamma F$. A necessary condition for this map to have finite negative inertia index, i.e.  $\textrm{ind}^-\lambda\textrm{Hess}_\gamma F<+\infty$, is that $\gamma$ has to be a Goh singular curve. In this sense we may think of the Goh condition as a necessary second-order optimality condition for singular curves.
\subsection{The global ``chart'' and the minimal control}\label{sec:global}We discuss in this section a useful construction introduced in \cite{LerarioMondino} in order to switch from curves to controls.

Assume $M$ is a compact manifold. Given a subriemannian structure on $\Delta\subset TM$, there exists a family of vector fields $X_1, \ldots, X_l$ with $l\geq d$ such that:
\be \Delta_x=\textrm{span}\{X_1(x), \ldots, X_l(x)\}, \qquad \forall x\in M.\ee
Moreover the previous family of vector fields can be chosen such that for all $x\in M$ and $u\in \Delta_x$ we have \cite[Corollary 3.26]{AgrachevBarilariBoscain}:
\be\label{eq:uminglob}
|u|^2=\inf \left\{u_1^2+\cdots + u_l^2\,\bigg|\, u=\sum_{i=1}^l u_i X_i(x)\right\},
\ee
where $|\cdot|$ denotes the modulus w.r.t. the fixed subriemannian structure.
Denoting by $\mathcal{U}=L^2(I, \R^l)$,  we define the map $A: {\mathcal U}\to \Omega$ by:
\be\label{eq:defA}
 A(u)=\textrm{the curve solving the Cauchy problem $\dot\gamma =\sum_{i=1}^lu_i(t)X_i(\gamma(t))$ and $\gamma(0)=x$}.
 \ee
(We can use the compactness of $M$ to guarantee that the solution to the Cauchy problem is defined for all $t\in [0,1]$, otherwise we need to define $\mathcal{U}$ as the open set of controls for which the solution $A(u)$ is defined up to time $t=1$).

We will consider this construction fixed once and for all,  and call it the ``global chart''. Abusing of notation, the endpoint map for this global chart will still be denoted by $F:\mathcal{U}\to M$:
\be\label{eq:globalendpoint}
 F:\mathcal{U}\to M, \quad F(u)=F(A(u)).
\ee

The map $A$ is continuous (both for the strong and the weak topologies on $L^{2}(I, \R^l)$) and has a right inverse $\mu:\Omega \to \mathcal{U}$ defined by:
\be \mu(\gamma)=u^*(\gamma)\ee
where $u^*(\gamma)$ is the control realizing the minimum of $\|\cdot\|^2$ on $A^{-1}(\gamma)$ (notice that this in particular implies $J(A(u))\leq J(u)$). This control is called the \emph{minimal control}  \cite[Remark 3.9]{AgrachevBarilariBoscain}. The minimal control exists and is unique by \cite[Lemma 2]{LerarioMondino}; it depends continuously on the curve $\gamma$ (for the strong topologies) by \cite[Proposition 4]{LerarioMondino}.
Moreover \cite[Lemma 3]{LerarioMondino} gurarantees that:
\be \label{eq:Ju*}
 J(\gamma)=\frac{1}{2}\|u^*(\gamma)\|^2.\\
\ee

In the sequel, given a point $y\in M$ we will fix a set of coordinates on a neighborhood $U\simeq \R^m$ of the point $y$, denote by $\mathcal{U}=F^{-1}(U)$ and simply write
\be \varphi:\mathcal{U}\to U\times \R\subset \R^{m+1}\ee
for the extended endpoint map already in coordinates.

A singular control $u$ is a critical point of $F:\mathcal{U}\to M$; its corank is the corank of $d_uF$ (notice that the corank of $u$ can also be defined as the corank of $F|_{\{J=J(u)\}}$).

\subsection{How to build a subriemannian manifold}
Following the notation of \cite{AgrachevBarilariBoscain}, we will assume that the distribution $\Delta\subset TM$ is defined as the image of a bundle map with constant rank:
\be f:M\times \mathbb{R}^l\to TM, \quad \Delta_x=f(U_x)\ee
In this way $\Delta$ can be endowed with a subriemannian metric by simply taking \eqref{eq:uminglob} as a definition.
By virtue of \cite[Corollary 3.26]{AgrachevBarilariBoscain}, this construction is indeed equivalent to the standard one. Denoting by $\{e_1, \ldots, e_l\}$ the standard basis of $\R^{l}$, this approach also has the advantage that the vector fields  generating the distribution are naturally defined as:
\be X_i(x)=f(x,e_i), \quad x\in M.\ee

\subsection{Generic properties and soft curves}\label{subsect:genericprop}
In addition to the totally nonholonomic condition on $\Delta$ (also called the H\"ormander condition \cite{AgrachevBarilariBoscain}), in this paper we will consider subriemannian structures whose singular curves satisfy the following properties.

\begin{defi}[Soft singular curves]\label{def:soft}
We will say that a singular curve is \emph{soft} if: \emph{(a)} it has corank one, \emph{(b)} it is not Goh and \emph{(c)} it is strictly abnormal. We will use the same terminology for singular controls.
\end{defi}

For \emph{generic} subriemannian structures all singular curves are soft, as it is clarified by the following result from \cite{CJT}. We denote by $\mathcal{D}_d$ the set of rank $d$ distributions on $M$ endowed with the Whitney $C^\infty$ topology and by $\mathcal{G}_d$ the set of of couples $(\Delta, g)$ where $\Delta$ is a distribution on $M$ and $g$ is a Riemannian metric on $\Delta$, endowed with the Whitney $C^{\infty}$ topology. We will say that a distribution $\Delta\subset TM$ satisfy a property from (a), (b), (c) if all its singular curves satisfy this property.
		\begin{thm}\label{thm:CJT}If $d\geq 2$ there exists an open dense set $O_{a,d}\subset \mathcal{D}_m$ where condition \emph{(a)} is satisfied \cite[Theorem 2.4]{CJT}; if $d\geq 3$ there exists an open dense set $O_{b,d}\subset \mathcal{D}_d$ where also condition \emph{(b)} is satisfied \cite[Corollary 2.5]{CJT}. Moreover, if $d\geq 2$ there exists an open dense set $O_{c,d}\subset \mathcal{G}_m$ where condition \emph{(c)} is satisfied \cite[Proposition 2.7]{CJT}. In particular for a generic subriemannian structure of rank $d\geq 3$ all singular curves are soft.
		\end{thm}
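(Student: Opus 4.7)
The plan is to prove each of the three conditions via a Thom transversality argument: one views a distribution (respectively a subriemannian structure) as a section of a natural bundle over $M$, isolates for each ``bad'' behavior a stratified subset of an appropriate jet bundle, and then appeals to the Whitney $C^\infty$ transversality theorem. Concretely, I would represent a rank-$d$ distribution locally as a bundle map $f:M\times\R^l\to TM$ of rank $d$ as in Section \ref{sec:global}, and a metric $g$ on $\Delta$ by the associated Gram data, so that the geometry of abnormal extremals along a candidate singular curve $\gamma$ depends only on finitely many derivatives of these tensors.

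For condition \emph{(a)}, a singular curve of corank at least two admits two linearly independent abnormal lifts, equivalently the two-form $\overline{\omega}$ on $\Delta^\perp$ has a kernel of dimension $\geq 2$ along the lift. Translating this into a condition on the $1$-jet of the defining forms carves out a stratified subset $\Sigma_a$ of the relevant jet bundle whose codimension exceeds $\dim M$ when $d\geq 2$, and Thom transversality then produces an open dense $O_{a,d}\subset \mathcal{D}_d$ whose $1$-jets miss $\Sigma_a$, ruling out corank-$\geq 2$ singular curves. For \emph{(b)}, the Goh condition $\lambda(t)\in(\Delta^2_{\gamma(t)})^\perp$ is encoded in the $2$-jet of the defining forms because $\Delta^2=\Delta+[\Delta,\Delta]$; since $\dim \Delta^2/\Delta$ is generically $\binom{d}{2}$ while a corank-one abnormal covector modulo scaling has only $d-1$ degrees of freedom, for $d\geq 3$ the inequality $\binom{d}{2}>d-1$ together with a codimension count delivers an open dense $O_{b,d}$. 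For \emph{(c)}, being the projection of a normal extremal requires $\lambda(t)$ to solve the Hamiltonian equation $\dot\lambda=\overrightarrow{H}(\lambda)$ simultaneously with the abnormal characteristic equation; perturbing the metric $g$ in $\mathcal{G}_d$ (which deforms $\overrightarrow{H}$ without touching the abnormal flow on $\Delta^\perp$) shows that this coincidence is non-generic, yielding $O_{c,d}$.

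The last clause is then formal: pulling back \emph{(a)} and \emph{(b)} from $\mathcal{D}_d$ to $\mathcal{G}_d$ via $(\Delta,g)\mapsto\Delta$ and intersecting with $O_{c,d}$ produces an open dense subset of $\mathcal{G}_d$ whose elements are, by Definition \ref{def:soft}, exactly those subriemannian structures all of whose singular curves are soft.

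The main obstacle is the transversality step itself, which is more delicate than a pointwise codimension count suggests: singular curves are one-parameter families $\gamma:[0,1]\to M$ rather than isolated points of a jet bundle, so one must apply a parametric (multi-jet) version of Thom transversality along the abnormal Hamiltonian flow and control the behavior of the strata uniformly in the time parameter. A secondary difficulty is the passage from local genericity to a global statement in the Whitney $C^\infty$ topology on a possibly non-compact $M$: this is handled by exhausting $M$ by compact subsets and taking a countable intersection of open dense sets, which remains dense because the Whitney $C^\infty$ topology makes $\mathcal{D}_d$ and $\mathcal{G}_d$ Baire spaces.
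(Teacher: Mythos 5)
There is a genuine gap, and it is worth noting first that the paper does not prove this statement at all: Theorem \ref{thm:CJT} is a quotation of results of Chitour, Jean and Tr\'elat (\cite{CJT}, Theorem 2.4, Corollary 2.5, Proposition 2.7, together with \cite{CJT2} for the control-system version), and the authors' ``proof'' consists of these citations. What you propose is therefore a re-proof from scratch of the main theorems of \cite{CJT}, and your sketch defers exactly the step that constitutes the content of that paper. The conditions (a), (b), (c) are not pointwise conditions on a jet of the distribution: they assert the non-existence of an entire curve (an abnormal extremal, i.e.\ a solution of a differential inclusion in $\Delta^\perp$) with certain degeneracies. It is not established in your proposal -- and it is not at all obvious -- that ``there exists a singular curve of corank $\geq 2$'' or ``there exists a Goh singular curve'' can be rewritten as non-transversality of a jet extension to a finite-codimension stratified subset $\Sigma_a$ or $\Sigma_b$ of a jet bundle; your codimension counts (e.g.\ $\binom{d}{2}>d-1$) reproduce the heuristic behind the results but are asserted, not derived, and you yourself flag the parametric/multi-jet transversality along the abnormal flow as ``the main obstacle'' without resolving it. The actual arguments in \cite{CJT} proceed differently, through a careful analysis of the equations for abnormal extremals (minimal-order singular curves, Goh matrices) rather than through an off-the-shelf Thom transversality statement.

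Two further concrete problems. First, your treatment of non-compact $M$ -- exhausting by compacta and intersecting countably many open dense sets -- only yields a residual (dense $G_\delta$) set, whereas the theorem claims an \emph{open} dense set in the Whitney $C^\infty$ topology; openness requires a separate compactness argument on the relevant families of singular curves/controls and does not follow from Baire category. Second, the argument for (c) (``perturbing $g$ deforms $\overrightarrow{H}$ without touching the abnormal flow, so the coincidence is non-generic'') is only a plausibility statement: one must show that the set of metrics for which \emph{some} singular curve of the fixed distribution admits a normal lift is contained in a closed set with empty interior, uniformly over the (infinite-dimensional) family of singular curves, which again is the substance of \cite[Proposition 2.7]{CJT}. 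As it stands, your text is an outline of a strategy, not a proof; for the purposes of this paper the correct route is simply to invoke \cite{CJT} and \cite{CJT2}, as the authors do, and to note (as in Section \ref{subsect:genericprop}) how the conditions transfer from curves to controls.
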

Using the approach of viewing the subriemannian structure as the image of a bundle map, the above conditions are still generically satisfied. Specifically, when working in the global chart, we consider the control system $\{X_1, \ldots, X_l\}$ whose \emph{trajectories} are solutions to:
\be \dot x=\sum_{i=1}^lu_i X_i, \quad x(0)=x.\ee
The fact that generically in the Whitney topology for $l$-ples of vector fields on $M$ all singular controls are soft is granted by \cite[Theorem 2.6, Corollary 2.7]{CJT2}.	
Moreover it is not difficult to verify that if a control is not soft then the same is true for the associated trajectory. In fact let $G:\mathcal U\to M$ be defined as $G=F\circ A$, and assume that $u$ is a critical point for $G$. Then the following chain of implications holds \be 0=\lambda d_u G\Leftrightarrow \lambda d_{A(u)}F\circ d_uA=0\Leftrightarrow \lambda d_{A(u)}F=0,\ee where we have used in the last implication that $A$ is surjective. Then any abnormal lift of $u$ is an abnormal lift for the (singular) curve $\gamma$, hence it must be unique, and it must annihilate $\Delta_{\gamma(t)}^2$ for every $t\in [0,1]$ whence (a) and (b) follows also for controls. To prove (c) observe that if the singular control $u$ admits a normal extremal lift, then we must have $\lambda d_uG=d_u J=u$, and in particular the curve $A(u)$ has to be a local minimizer of the length, parametrized with constant velocity. The Pontryagin maximum principle says that in this case $A(u)$ is either the projection of a normal or an abnormal extremal, but the first possibility is excluded by point (c) for curves. Hence $u$ cannot admit a normal extremal lift (it would also be a normal extremal lift of $A(u)$), and it is therefore strictly abnormal. (As a corollary, if all singular curves are soft, then the same is true for all singular controls.)
	
The set $\Omega$ can be decomposed as follows: \be\label{eq:decomp}\Omega=\mathcal R\cup \mathcal C\cup \mathcal A,\ee where $\mathcal R$ is the set of regular points for $\varphi$, $\mathcal{C}$ consists of \emph{strictly normal} curves, i.e.  regular points $\gamma$ of $F$ for which there exists $(\lambda_0, \lambda)$ with $\lambda_0\neq 0$ such that $\lambda d_\gamma F=\lambda_0d_\gamma J$, and $\mathcal A$ are the \emph{abnormal} curves, for which there exists $(0,\lambda)$ such that $\lambda d_\gamma F=0$. If properties (a) and (c) are verified, these three sets are indeed disjoint.

All the ``technical'' results in the sequel will be proved for the global chart, but we will go back to the general setting of horizontal curves for the main theorems.
Abusing of notations, we will still denote by $\mathcal{C}$ the set of strictly normal \emph{controls} and by $\mathcal{A}$ the set of abnormal \emph{controls}.

	\section{soft abnormal controls: the cross section}\label{sec:cross}
Let $u_0$ be a \emph{soft} abnormal control with Energy $J(u_0)\leq E$. Then the corank of $\varphi$ at a $u_0$ is one and there exist $e_1(u_0), \ldots, e_m(u_0)$ such that:
\be \textrm{im}d_{u_0}\varphi =\textrm{span}\{d_{u_0}\varphi \,e_1(u_0), \ldots, d_{u_0}\varphi \,e_m(u_0)\}.\ee
  Consider now the finite co-dimensional set: \be\label{eq:V} \mathcal P=\left\{v\in L^2(I,\R^l)\,\bigg |\,\|v\|_{L^2}\leq 1\,\:\text{and}\: \int_0^1v(t)dt=0\right\}.\ee For $v\in\mathcal P$,  $\bt\in[0,1]$ and $s\in \R$ small enough, we set: \be\label{eq:vs} v_s(t)=\left\{\begin{array}{cc}\frac{1}{|s|^{1/4}}v\left(\frac{t-\bt}{|s|^{3/4}}\right)&\overline t\leq t\leq \overline t+|s|^{3/4}\\
	&\\
	0&\textrm{otherwise}\end{array}\right.\ee
	An easy computation shows that for any $v\in\mathcal P$ we have:
	\be\label{eq:norm}\|v_s\|_{L^2}=|s|^{1/8}\|v\|_{L^2}\xrightarrow{s\to 0} 0\ee
	
	For any $u\in\Lp{2}{I,\R^l}$ we define the non autonomous horizontal vector field $f_{u}=\sum_{i=1}^{l}u^iX_i$; if $0\leq t_1\leq t_2\leq 1$, its flow is given by the diffeomorphism (we highlight from here on the explicit dependence on $u$): \be\label{put}\Put{t_1}{t_2,u}=\eexp\int_{t_1}^{t_2}f_{u(t)}dt.\ee

Since $u_0$ is not a Goh singular control, there exist $\bt\in[0,1/2]$\footnote{Even small pieces of singular curves cannot be Goh curves} and $a,b\in\R^m$ such that \be \label{eq:nonGoh}\langle(\Put{\bt}{1,u_0})^*\lambda,[f_{a},f_{b}]\rangle\neq 0.\ee This in turn yields \cite[Lemma 11.21]{AgrachevBarilariBoscain} that the map $Q^{u_0}:\mathcal P\to\R$, defined by \be\label{eq:Q}Q^{u_0}(v(\cdot))=\int_0^1\langle(\Put{\bt}{1,u_0})^*\lambda,[f_{w(\theta)},f_{v(\theta)}]\rangle d\theta,\ee with $w(\theta)=\int_0^\theta v(\zeta)d\zeta$, has infinite positive and negative index. It is then possible to choose $v^+$ and $v^-$ in $\mathcal P$ so that $\textrm{sign}(Q^{u_0}(v^{\pm}))=\pm 1$.

With this notation, we define:
		\be\label{eq:sigma}\alpha_{u_0}(x,y)=v_{|x|}^{\textrm{sgn}(x)}+y_1e_1(u_0)+\cdots +y_m e_m(u_0),\ee where in the definition of $v_{|x|}^{\textrm{sgn}(x)}$ (compare with \eqref{eq:vs}) we choose $\bt$ as a time for which \eqref{eq:nonGoh} holds.

The goal of this section is to prove the following key proposition.

\begin{prop}\label{prop:proof}For every soft $u_0\in \mathcal{A}^E$ consider the function:
\be\label{equation} G_{u_0}(u,x,y)=\varphi(u+\alpha_{u_0}(x,y))-\varphi(u)\ee
where $\alpha_{u_0}$ is defined as above.
There exist weak neighborhoods $\mathcal{V}\subset \mathcal{W}$ of $u_0$, positive constants $r_{1}, r_{2}, r_3>0$ and a function:
\be g:B(0, r_1)\times \mathcal{W}\to B(0, r_3)\ee
which is continuous for the strong topology and such that for every $(w, u)\in B(0, r_1)\times \mathcal{W}$
\be G_{u_0}(u,g(w,u))=w.\ee
Moreover for every $u\in \mathcal{V}\cap \{J\leq E\}$ we have $B(u, r_2)\subset \mathcal{W}$.
\end{prop}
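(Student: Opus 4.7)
The plan is to prove the statement as a quantitative non-smooth inverse mapping theorem, parametric in $u$ in a weak neighborhood of $u_0$. First I would expand $G_{u_0}(u_0,x,y)$ at $(0,0)$. Let $\lambda$ be the abnormal covector of $u_0$. Corank-one plus strict abnormality of $u_0$ give that $d_{u_0}\varphi$ has exactly codimension-one image in $\mathbb{R}^{m+1}$, with annihilator generated by $(\lambda,0)$; pick $n\in\mathbb{R}^{m+1}$ complementary to $\mathrm{im}\,d_{u_0}\varphi$. The $y$-term yields $d_{u_0}\varphi(y_1 e_1(u_0)+\dots+y_m e_m(u_0))=By$ with $B\colon\mathbb{R}^m\to\mathbb{R}^{m+1}$ a linear isomorphism onto $\mathrm{im}\,d_{u_0}\varphi$. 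The $x$-term $\varphi(u_0+v^{\mathrm{sgn}(x)}_{|x|})-\varphi(u_0)$ has no first-order contribution along $n$ (because $\lambda\,d_{u_0}F=0$ by abnormality), and its second-order contribution along $n$ is exactly $Q^{u_0}(v^\pm)|x|^{1/2}=\pm|x|^{1/2}+o(|x|^{1/2})$ by \cite[Lemma 11.21]{AgrachevBarilariBoscain} and the choice of $v^\pm$. Transverse to $n$, the $x$-term has a first-order contribution of size $\|v^{\mathrm{sgn}(x)}_{|x|}\|_{L^2}=|x|^{1/8}$, but this lies in $\mathrm{im}\,B$ and will be absorbed into the $y$-correction.

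To make the inversion work I would combine the scaling $s:=\mathrm{sgn}(x)|x|^{1/2}$ (so $x=\mathrm{sgn}(s)s^2$) with a $y$-correction absorbing the tangential first-order contribution, namely $\tilde y:=y+B^{-1}\Pi_B(d_{u_0}\varphi\cdot v^{\mathrm{sgn}(s)}_{s^2})$, where $\Pi_B$ denotes the projection $\mathbb{R}^{m+1}\to\mathrm{im}\,B$ along $n$. In the $(s,\tilde y)$ variables the map becomes $L(s,\tilde y)+R(s,\tilde y)$ with $L(s,\tilde y):=sn+B\tilde y$ a linear isomorphism of $\mathbb{R}^{m+1}$ and $R(s,\tilde y)=o(|s|+|\tilde y|)$: that is, it is continuous and strictly differentiable at $(0,0)$ with an isomorphism as differential. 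A quantitative Brouwer/Graves open mapping argument (apply Brouwer's theorem to the iteration $(s,\tilde y)\mapsto L^{-1}(w-R(s,\tilde y))$ on a ball of radius $C|w|$) then gives, for every $w\in B(0,r_1)$, a solution $(s,\tilde y)$ with $|s|+|\tilde y|\leq C|w|$, which pulls back through the change of variables to $(x,y)\in B(0,r_3)$ satisfying $G_{u_0}(u_0,x,y)=w$.

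I would then make this inversion uniform in $u$ in a weak neighborhood of $u_0$. The key estimate is on the remainder
\begin{equation*}
G_{u_0}(u,x,y)-G_{u_0}(u_0,x,y)=\int_0^1\bigl(d_{u+t\alpha}\varphi-d_{u_0+t\alpha}\varphi\bigr)\alpha\,dt, \qquad \alpha=\alpha_{u_0}(x,y).
\end{equation*}
For the $F$-component, $d_uF\to d_{u_0}F$ in operator norm by Proposition \ref{propo:contendp}, uniformly on the weakly precompact set $\{J\leq E\}$. For the $J$-component one has $(d_uJ-d_{u_0}J)\alpha=\langle u-u_0,\alpha\rangle$; since the family $\{\alpha_{u_0}(x,y):(x,y)\in \overline{B(0,r_3)}\}$ is a compact subset of $L^2$, weak convergence $u\to u_0$ implies uniform convergence on this family by Banach--Steinhaus. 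Hence on a weak neighborhood $\mathcal{W}$ of $u_0$ the remainder $R$ acquires a uniformly small additive summand, and the Brouwer argument from the previous paragraph still delivers $g(w,u)\in B(0,r_3)$ with $G_{u_0}(u,g(w,u))=w$. The smaller neighborhood $\mathcal{V}\subset\mathcal{W}$ and the radius $r_2$ are then chosen so that the strong ball $B(u,r_2)$ remains inside $\mathcal{W}$ for every $u\in\mathcal{V}\cap\{J\leq E\}$. Strong continuity of $g$ follows by a standard selection argument: if $(w_n,u_n)\to(w,u)$ strongly, any subsequential limit of $g(w_n,u_n)\in\overline{B(0,r_3)}$ solves $G_{u_0}(u,\cdot)=w$, and the $O(|w|)$ quantitative bound together with the smoothness of $\varphi$ forces this limit to be unique.

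The main obstacle will be matching the H\"older scales: the $L^2$-norm $|x|^{1/8}$ of $v^\pm_{|x|}$ dominates the useful $|x|^{1/2}$ second-order displacement it produces along $n$, so the first-order tangential contribution must be cancelled by the $y$-correction before any contraction argument can close. Once the change of variables $(x,y)\mapsto(s,\tilde y)$ is in place the rest reduces to a parametric version of the classical open mapping theorem, but one needs careful control of the weak-to-strong interaction to guarantee that the $u$-dependent remainder stays smaller than the smallest relevant scale $|s|$ throughout the ball $\|(s,\tilde y)\|\leq C|w|$.
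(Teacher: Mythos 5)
There is a genuine gap, and it sits at the heart of your argument: the expansion of the cross-section along the cokernel direction. With the concentration scaling \eqref{eq:vs} (which you are using, since you quote $\|v^{\pm}_{|x|}\|_{L^2}=|x|^{1/8}$), the Goh quadratic term is \emph{linear} in the parameter, not of order $|x|^{1/2}$: substituting $\tau=\bt+|x|^{3/4}\sigma$, $t=\bt+|x|^{3/4}\theta$ in the double integral produces the factor $|x|^{3/2}\cdot|x|^{-1/2}=|x|$, so the displacement of $\varphi_0$ along the cokernel direction is comparable to $|x|\,Q^{u_0}(v^{\mathrm{sgn}(x)})$. This is exactly what Proposition \ref{propo:estimate} records: the one-sided $x$-derivatives of $G_0(u,\cdot,0)$ converge to nonzero multiples of $\int_0^1\langle(\Put{\bt}{1,u_0})^*\lambda,[f_{w^{\pm}(\theta)},f_{v^{\pm}(\theta)}]\rangle d\theta$, i.e.\ the map is Lipschitz in $x$ with slope bounded away from zero, not H\"older of exponent $1/2$. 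Consequently the change of variables $x=\mathrm{sgn}(s)s^{2}$ built on your claim destroys the structure you need: in the $(s,\tilde y)$ variables the cokernel component behaves like $c\,\mathrm{sgn}(s)s^{2}=o(|s|)$, so $L(s,\tilde y)=sn+B\tilde y$ is \emph{not} the differential at the origin, the remainder is not $o(|s|+|\tilde y|)$, and the Brouwer/Graves iteration has no invariant ball of radius $C|w|$ (for $w$ pointing along $n$ the actual solution satisfies $|s|\asymp|w|^{1/2}$, which leaves $B(0,C|w|)$ for small $w$). The $O(|w|)$ bound you later invoke for uniqueness and for the continuity of $g$ collapses with it.

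Even after correcting the exponent (in which case no change of variables is needed), the scheme cannot run through strict differentiability: the one-sided slopes coming from $v^{+}$ and $v^{-}$ generally differ and vary with $u$, so $G_0(u,\cdot,\psi)$ has a corner at $x=0$ and there is no single isomorphism with an $o$-remainder. This is precisely why the paper's proof is organized around nonsmooth tools: for each $x$ it inverts the tangential block in $y$ uniformly over a weak neighborhood (uniform lower bound on the $y$-derivatives via the function $H$, invariance of domain, and \cite[Lemma 5]{Clarke} for a uniform radius, Lemma \ref{lemma:coord}), and then inverts the remaining scalar equation in $x$ using the positive Clarke subdifferential, the Lipschitz mean value theorem and Clarke's implicit function theorem (Lemma \ref{lemma:inverse}). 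Finally, the points you merely assert also require work: the existence of $\mathcal{V}$ and $r_2$ with $B(u,r_2)\subset\mathcal{W}$ for $u\in\mathcal{V}\cap\{J\leq E\}$ rests on the weakly continuous functions $\alpha_1,\alpha_2$ defining $\mathcal{W},\mathcal{V}$ with an $\epsilon$-gap together with Lemma \ref{lemma:wdist}, and the uniqueness needed in your subsequence argument for the continuity of $g$ comes from the uniform injectivity (cf.\ \eqref{eq:clos}), not from ``smoothness of $\varphi$'': the section $x\mapsto v^{\mathrm{sgn}(x)}_{|x|}$ is only continuous, and the composed map is merely Lipschitz in $x$.
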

We postpone the proof to Section \ref{sec:proof}, because it will require some preliminary results.
\subsection{A change of coordinates}

Let us start by writing:
\be G_{u_0}(u, x, y)=(\varphi_0(u, x,y), \varphi_1(u,x,y), \ldots, \varphi_{m}(u, x, y))-\varphi(u)\ee
and for $i=1, \ldots, m$ let us denote by $\psi_i^u:\R^{m+1}\to \R$ the function:
\be \psi_i^u(x,y)=\varphi_i(u, x, y)-\varphi_i(u).\ee
Notice that the map $G_{u_0}$ is weak continuous: the first components are continuous because the Endpoint map itself is continuous; for the last component we have:
\begin{align} \psi_m^u(x,y)&=\frac{1}{2}\left(\|u+\alpha_{u_0}(x,y)\|^2-\|u\|^2\right)\\
&=\frac{1}{2}\left(2\langle u, \alpha_{u_0}(x,y)\rangle +\|\alpha_{u_0}(x,y)\|^2\right)\end{align}
which is weak-continuous as a function of $(u,x,y)$.
We will consider local coordinates on a neighborhood $W\simeq \R^{m+1}$ of zero induced by the splitting:
		\be \label{eq:coc}\R^{m+1}\simeq\textrm{coker} d_{u_0}\varphi\oplus  \textrm{im} d_{u_0}\varphi.\ee
\begin{lemma}\label{lemma:coord}For every soft $u_0\in \mathcal{A}^E$ there exist a weak neighborhood $\mathcal{W}_1$ of $u_0$, a ball $B_1\subset \R^m$ centered at zero and an interval $I_1\subset \R$ centered at zero such that for every $u\in \mathcal{W}_1$ the map:
\begin{align} \phi^u=(x, \psi_1^u, \ldots, \psi_m^u)&:I_1\times B_1\to \R^{m+1}\\
&(x,y)\mapsto(x, \psi_1^u(x,y), \ldots, \psi_m^u(x,y))
\end{align}
is a coordinate chart (i.e. a homeomorphism onto its image).

Moreover there exist positive numbers $r_1', r_1''>0$ and a weak neighborhood $\mathcal{V}_1\subset \mathcal{W}_1$ such that $\phi^u(I_1\times B_1)\supset I_1\times B(0, r_1')$  and $B(u, r_1'')\subset \mathcal{W}_1$ for every $u\in \mathcal{V}_1\cap \{J\leq E\}$.
\end{lemma}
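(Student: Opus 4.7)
The plan is to exploit the triangular structure $\phi^u(x,y) = (x, \Psi^u_x(y))$ with $\Psi^u_x(y) := (\psi_1^u(x,y), \dots, \psi_m^u(x,y))$, which reduces the problem to a quantitative inverse function theorem in the $y$-variable only (where $\phi^u$ is smooth). First I would identify the Jacobian: by the chain rule $\partial_{y_j}\psi_i^u(x,y) = (d_{u+\alpha_{u_0}(x,y)}\varphi\cdot e_j(u_0))_i$. By construction the vectors $d_{u_0}\varphi\cdot e_j(u_0)$ form a basis of $\mathrm{im}\,d_{u_0}\varphi$, and the coordinates on $\R^{m+1}$ induced by the splitting \eqref{eq:coc} identify $\mathrm{im}\,d_{u_0}\varphi$ with the last $m$ coordinates; choosing the $e_j(u_0)$'s so that $d_{u_0}\varphi\cdot e_j(u_0)$ is the $j$-th standard basis vector of this image, the matrix $\partial_y \Psi^{u_0}_0(0)$ is the $m\times m$ identity.

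The next step is to upgrade this to uniform invertibility over a weak neighborhood in $u$. Two continuity ingredients enter: (i) by Proposition~\ref{propo:contendp}, $u\mapsto d_uF$ is continuous from the weak topology to the operator norm, and since the $J$-component of $\varphi$ has linear differential $u\mapsto\langle u,\cdot\rangle$, the same continuity holds for $d_u\varphi$ on strongly bounded sets; (ii) from \eqref{eq:norm} and linearity, $\|\alpha_{u_0}(x,y)\|_{L^2}\to 0$ as $(x,y)\to(0,0)$. Combining these, one chooses a weak neighborhood $\mathcal{W}_1$ of $u_0$ and small $I_1, B_1$ so that $\|\partial_y \Psi^u_x(y)-I\|_{\mathrm{op}}\le 1/2$ for every $(u,x,y)\in \mathcal{W}_1\times I_1\times B_1$.

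For each such $(u,x)$, the smooth map $y\mapsto \Psi^u_x(y)$ is a $\frac12$-perturbation of the identity on $B_1$, so a standard contraction-mapping argument shows it is a diffeomorphism of $B_1$ onto an open subset of $\R^m$ containing $B(\Psi^u_x(0),\rho)$ for some $\rho>0$ uniform in $(u,x)$. The triangular form $\phi^u(x,y)=(x,\Psi^u_x(y))$ then yields injectivity, and the continuous inverse $(X,Y)\mapsto(X,(\Psi^u_X)^{-1}(Y))$ gives the coordinate chart statement. To obtain the image containment $\phi^u(I_1\times B_1)\supset I_1\times B(0,r_1')$, restrict to $\mathcal{W}_1\cap\{J\le E\}$, which is strongly bounded since controls there satisfy $\|u\|\le\sqrt{2E}$; on such a bounded set $\varphi$ is Lipschitz with some uniform constant $L$ (a consequence of the weak-to-operator-norm continuity of $d\varphi$ and weak compactness of bounded sets in $L^2$). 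Hence $|\Psi^u_x(0)|\le L\|v_{|x|}^{\mathrm{sgn}(x)}\|_{L^2}\le L|x|^{1/8}\max\{\|v^+\|,\|v^-\|\}\to 0$ uniformly in $u$ as $|x|\to 0$; shrinking $I_1$ makes $|\Psi^u_x(0)|<\rho/2$ and therefore $\Psi^u_x(B_1)\supset B(0,\rho/2)=:B(0,r_1')$.

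Finally, the pair $\mathcal{V}_1$, $r_1''$ comes from a routine topological argument: writing $\mathcal{W}_1=\{u:|\langle u-u_0,\xi_k\rangle|<\epsilon_k,\ k=1,\dots,N\}$, take $\mathcal{V}_1$ to use thresholds $\epsilon_k/2$ and set $r_1''<\min_k \epsilon_k/(2\|\xi_k\|)$; then $|\langle u+w-u_0,\xi_k\rangle|<\epsilon_k$ for every $u\in\mathcal{V}_1$ and $\|w\|<r_1''$, placing $u+w$ in $\mathcal{W}_1$. The main difficulty is the lack of smoothness of $\phi^u$ in the variable $x$: by \eqref{eq:norm}, $\alpha_{u_0}(x,0)$ is only $|x|^{1/8}$-H\"older in $x$, which forbids a direct smooth IFT on $\R^{m+1}$. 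The triangular structure is exactly the device that isolates the smooth dependence in $y$ and allows the quantitative IFT to run there, with $x$ playing only the role of a continuous parameter.
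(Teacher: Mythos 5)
Your argument is essentially correct, but it takes a different technical route from the paper. You exploit the triangular structure $\phi^u(x,y)=(x,\Psi^u_x(y))$ and the fact that $\psi^u$ is genuinely $C^1$ in $y$ (with $u$ weak and $x$ as continuous parameters), and then run a uniform contraction-mapping inverse function theorem in the $y$-variable: after normalizing the $e_j(u_0)$ so that $\partial_y\Psi^{u_0}_0(0)=\mathrm{Id}$, joint (weak$\times$strong) continuity of $(u,x,y)\mapsto d_{u+\alpha_{u_0}(x,y)}\varphi\,e_j(u_0)$ (Proposition \ref{propo:contendp} plus the fixed linear functionals for the energy row) gives the $1/2$-perturbation bound on a product box, and injectivity, continuity of the inverse, and the uniform covering radius follow from the standard quantitative estimates. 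The paper instead stays in a ``nonsmooth'' framework parallel to Lemma \ref{lemma:inverse}: it introduces the function $H$, uses compactness of $S^{m-1}$ to get a uniform lower bound, proves injectivity by a mean-value estimate, invokes the Invariance of Domain theorem, and uses Clarke's covering lemma for the image containment; your smooth IFT argument is more elementary and buys shorter, more standard estimates, at the cost of a normalization of the $e_j$'s that the paper does not need. Your treatment of $\mathcal{V}_1$ and $r_1''$ (shrink $\mathcal{W}_1$ to a basic weak neighborhood and perturb the defining functionals) is also simpler than the paper's route through the weakly continuous functionals $\alpha_1,\alpha_2$ and the separation Lemma \ref{lemma:wdist}, and it is valid. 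One caution: your proof of the containment $\phi^u(I_1\times B_1)\supset I_1\times B(0,r_1')$ relies on a uniform Lipschitz constant for $\varphi$ on the bounded set $\{J\leq E\}$, so you obtain it only for $u\in\mathcal{W}_1\cap\{J\leq E\}$. This satisfies the statement as literally quantified over $\mathcal{V}_1\cap\{J\leq E\}$, but the paper arranges (via the functional $\alpha_2(u)=\max_x\|\psi^u(x,0)\|$ built into the definition of $\mathcal{W}_1$) that the containment holds for \emph{every} $u\in\mathcal{W}_1$, which is what Corollary \ref{cor:impl1} actually uses, since $g_1$ is defined on $I_1\times B(0,r_1')\times\mathcal{W}_1$; to support that corollary you would replace your Lipschitz step by the observation that $\psi^u(0,0)=0$ for all $u$ and that $u\mapsto\max_{x\in\overline I_1}\|\psi^u(x,0)\|$ is weakly continuous, and cut $\mathcal{W}_1$ down by a smallness condition on it, exactly as in the paper.
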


\begin{proof} Denoting by $(G_0, G_1, \ldots, G_m)$ the components of $G_{u_0}$, notice that the partial derivatives $\partial_{y_j}G_i|_{(u, x, y)}$ exist for $i, j=1, \ldots, m$ and are continuous functions for the \emph{weak} topology on the $u$-variable. In fact there exists a matrix $A$ (because we have performed the change of coordinates \eqref{eq:coc} in the target space) such that:
\be\label{G} \left(\frac{\partial G_i}{\partial y_j}\bigg|_{(u, x, y)}\right)_{i, j=1, \ldots, m}=A\cdot \left(\begin{array}{ccccc}\frac{1}{2}\langle u+\alpha_{u_0}(x,y), e_1(u_0)\rangle &  & \cdots &  & \frac{1}{2}\langle u+\alpha_{u_0}(x,y), e_m(u_0)\rangle \\ d_{u+\alpha_{u_0}(x,y)}F\,e_1(u_0) &  & \cdots &  & d_{u+\alpha_{u_0}(x,y)}F\, e_m(u_0) \\ &  &  &  &  \\\vdots &  & \vdots &  & \vdots \\ &  &  &  &  \\d_{u+\alpha_{u_0}(x,y)}F\, e_1(u_0) &  & \cdots &  & d_{u+\alpha_{u_0}(x,y)}F\, e_m(u_0)\end{array}\right).\ee
The elements of the first row in the above matrix are fixed linear functional evaluated on $u+\alpha_{u_0}(x,y)$ and are continuous in $(u,x,y)$ even when considering the weak topology for the $u$-variable; for all the other elements, the weak-continuity follows from the fact that the map $u\mapsto d_u F$ is weak-strong continuous.
We denote by:
\be D(u,x, y)=\left(\frac{\partial G_i}{\partial y_j}\bigg|_{(u, x, y)}\right)_{i, j=1, \ldots, m}.\ee
By assumption the matrix $D(u_0, 0,0)$ is of rank $m$ and there exists a ball $W$ centered at zero in $\R^m$ such that $\psi^{u_0}(0, \cdot):W\to \R^m$ is a diffeomorphism onto its image.

Consider the function $H:L^2\times \R\times \overline W\times S^{m-1}\to \R$ defined by:
\be H(u, x, y, v)=\max_{i=1,\ldots, m}\left\{|\langle \nabla_y\varphi_i|_{(u, x, y)}, v\rangle|\right\}\ee
where $ \nabla_y\varphi_i|_{(u, x, y)}$ denotes the $i$-th row of $D(u, x, y)$. Note that this function is continuous with respect to the weak topology in the $u$-variable.

We claim that there exists $a>0$ such that $H(u_0, 0, 0, v)>a$ for every $v\in S^{m-1}$. Assume that this is false. Then, for every $n\in \mathbb{N}$ there exists $v_n\in S^{m-1}$ such that $|\langle \nabla_y\varphi_{i}|_{(u_0, 0, 0)}, v_n\rangle|\leq 1/n$ for every $i=1, \ldots, m$. By compactness of $S^{m-1}$ this gives the existence of a nonzero vector $\lim_k v_{n_k}=\overline v\in S^{m-1}$ such that $|\langle \nabla \varphi_{i}|_{(u_0, 0, 0)}, \overline v\rangle|=0$, which means that $\overline v$ is orthogonal to all the rows of $D(u_0, 0, 0)$ which is impossible since this matrix is invertible.

This implies that there exists a weak-open neighborhood $U^{\textrm{weak}}=\mathcal{U}_1^{\textrm{weak}}\times U_1\times W_1$ of $(u_0, 0, 0)$ (here we take $W_1$ to be a ball for simplicity)
such that for every $v\in S^{m-1}$ there exists $i=i(v)\in\{1, \ldots, m\}$ such that  $|\langle \nabla_y\varphi_i|_{(u, x, y)}, v\rangle|>a$ for all $(u, x,y)\in U^{\textrm{weak}}$. In fact, by weak-continuity of $H$, for every $v\in S^{m-1}$ there exists a weak-open neighborhood $W(v)^{\textrm{weak}}=\mathcal{U}_1(v)^{\textrm{weak}}\times U_1(v)\times W_1(v)\times U_{S^{m-1}}(v)$ with the property that there exists  $i\in \{1, \ldots, m\}$ such that for every  $(u,x,y,w)\in W(v)^{\textrm{weak}}$  we have
$|\langle \nabla_y\varphi_i|_{(u, x, y)}, w\rangle|>a$. By compactness of $S^{m-1}$ there exist $v_1, \ldots, v_N$ such that $\{U_{S^{m-1}}(v_j)\}_{j=1,\ldots, N}$ is an open cover of $S^{m-1}$. The open set $U^{\textrm{weak}}$ is defined as:
\be U^{\textrm{weak}}=\bigcap_{j=1}^N \left(\mathcal{U}_1(v_j)^{\textrm{weak}}\times U_1(v_j)\times W_1(v_j)\right).\ee
We use this to prove that for every $(u,x)\in \mathcal{U}_{1}^{\textrm{weak}}\times U_1$ the following map is injective:
\be \psi^u(x, \cdot)=(\psi_1(u, x, \cdot), \ldots, \psi_m(u, x, \cdot)):W_1\to\R^m.\ee
To this end consider:
\begin{align}
\|\psi^{u}(x, y_1)-\psi^{u}(x, y_2)\|_1&=\sum_{i=1}^m\left|\varphi_i(u, x, y_1)-\varphi_i(u, x, y_2)\right|\\
&=\sum_{i=1}^m\left|\int_{0}^1\partial_t f_i(u,x,t)dt\right|=(*)
\end{align}
where $f_i(u,x,t)=\varphi_i(u, x, y_2+t(y_1-y_2))$. Consequently:
\be \partial_tf_i(u, x, t)=\langle \nabla_y\varphi_i|_{(u, x, y_2+t(y_1-y_2))}, y_1-y_2\rangle=\|y_1-y_2\|\langle \nabla_y\varphi_i|_{(u, x, y(t))}, v\rangle\ee
where we have set $y(t)=y_2+t(y_1-y_2)\in W_1$ and $v=\frac{y_1-y_2}{\|y_1-y_2\|}\in S^{m-1}.$ Thus we can write:
\begin{align}(*)&=\sum_{i=1}^m\left|\int_{0}^1\|y_1-y_2\|\langle \nabla_y\varphi_i|_{(u, x, y(t))}, v\rangle dt\right|\\
&\geq \left|\int_{0}^1\|y_1-y_2\|\langle \nabla_y\varphi_{i(v)}|_{(u, x, y(t))}, v\rangle dt\right|\\
&\geq  \|y_1-y_2\| a
\end{align}
which proves the injectivity of $\psi^u(x, \cdot).$

As a consequence for every $u\in \mathcal{U}_1^{\textrm{weak}}$ the map:
\be \phi^u=(x, \psi^u):U_1\times W_1\to \R^{m+1}\ee
is continuous and injective; by the Invariance of Domain Theorem this map is a homeomorphism onto its image.

We prove now that (up to restricting the open set $U^{\textrm{weak}}$) the image of $(x, \psi^u)$ contains a ball of uniform radius.

First, notice that the above chain of inequalities implies that for every $u\in \mathcal{U}_1^{\textrm{weak}}$ and $x\in U_1$ we have:
\be \|\psi^{u}(x, y_1)-\psi^{u}(x, y_2)\|\geq c a \|y_1-y_2\|\ee
where a constant $c>0$ appears, but it only depends on $m$ (because all norms on a finite dimensional space are quivalent).
We are now in the position of using \cite[Lemma 5]{Clarke}, which guarantees that if $rB\subset W_1$ (here $rB$ is a shorthand notation for $B(0, r)$), then:
\be \psi^u(x, \cdot)(rB)\supset \psi^{u}(x, 0)+rcaB.\ee
The map $G_{u_0}$ is weak continuous and $\psi^u(0,0)=0$. Hence for every $r>0$ there exists a weak open set $\mathcal{O}_1^{\textrm{weak}}(r)\times O_1(r)$ such that for every $(u,x)\in \mathcal{O}_1^{\textrm{weak}}(r)\times O_1(r)$ we have $\|\psi^u(x,0)\|<\frac{1}{3}acr.$
In particular for every $u\in\mathcal{U}^{\textrm{weak}}_1\cap \mathcal{O}_1^{\textrm{weak}}(r) $ and for every $x\in U_1\cap O_1(r)$, if $rB\subset W_1$ then:
\be \psi^u(x, \cdot)(rB)\supset \psi^{u}(x, 0)+racB\supset \frac{2}{3}racB.\ee
Let now $\tilde r>0$ be such that\footnote{Here we chose $\tilde r $ such that $2\tilde r\subset W_1$ in order to guarantee that:
\be\label{eq:clos} \textrm{clos}\left((\phi^u)^{-1}(U_3\times 2\tilde rac/3B)\right)\subset \textrm{clos}\left(U_3\times \tilde r B\right).\ee
We will need this property in the proof of Corollary \ref{cor:impl1}.} $2\tilde rB\subset W_1$ and denote by $\mathcal{U}_3^{\textrm{weak}}= \mathcal{U}_1^{\textrm{weak}} \cap \mathcal{O}_1^{\textrm{weak}}(\tilde r) $ and by $U_3= U_1\cap O_1(\tilde r).$
Then we have just proved that, for every $u\in\mathcal{U}_3^{\textrm{weak}}$, the map:
\be \phi^u:U_3\times \tilde rB\to \R^{m+1}\ee
is a homeomorphism onto its image, and this image contains $U_3\times \frac{2}{3}\tilde racB.$

Consider now the two functions:
\be \alpha_1(u)=\min_{(x, y, v)\in \overline U_3\times\tilde r\overline B\times S^{m-1}}H(u, x, y, v)\quad \textrm{and}\quad \alpha_2(u)=\max_{x\in \overline {U}_3}\|\psi^u(x, 0)\|.\ee
These functions are well defined (the max and the min are taken over compacts) and are continuous for the weak topology\footnote{This follows from this elementary fact. Let $F:P\times K\to \R$ be a continuous function, where $P$ and $K$ are topological spaces and $K$ is compact. Define $f(p)=\max_{k\in K}F(p,k)$. Then $f$ is continuous (and the analogue statement  with $\max$ replaced with $\min$ is also true). The proof is easy and left to the reader.}. Moreover:
\be \mathcal{U}_1^{\textrm{weak}}\supset\{\alpha_1>a\}\quad \textrm{and}\quad \mathcal{O}_{1}^{\textrm{weak}}(\tilde r)\supset\{\alpha_2<ac\tilde r/3\}.\ee
Define finally, for $\epsilon>0$ small enough, the open sets:
\be \mathcal{W}_1=\{\alpha_1>a, \alpha_2<ac\tilde r/3\},\quad \mathcal{V}_1=\{\alpha_1>a+\epsilon, \alpha_2<ac\tilde r/3-\epsilon\}, \quad I_1=U_3\quad B_1=\tilde rB. \ee
The two open sets $\mathcal{W}_1, \mathcal{V}_1$ are weakly open because $\alpha_1, \alpha_2$ are weak continuous; $\epsilon>0$ is taken small enough in order to guarantee that $u_0\in \mathcal{W}_1, \mathcal{V}_1$ (such an $\epsilon$ exists because $\alpha_1(u_0)>a$ and $\alpha_2(u_0)<ac\tilde r/3$).

Then $r_1'=\frac{2}{3}\tilde rac$ satisfies the requirements from the statement. For the existence of $r_1''>0$ we argue as follows. We consider the weak closed set $C=(\mathcal{W}_1)^c$ and the weak compact:
\be K=  \{\alpha_1\geq a+\epsilon, \alpha_2\leq a c \tilde r/3-\epsilon\}\cap\{J\leq E\}.\ee
These two sets are disjoint and by Lemma \ref{lemma:wdist} below there exists $r_1''>0$ such that each ball of radius $r_1''$ centered on some $u\in K$ is entirely disjoint from $C$, which means it is contained in $\mathcal{W}_1$. This concludes the proof.
\end{proof}
\begin{lemma}
	\label{lemma:wdist}
	Let $X$ be a normed space, $C\subset X$ be weakly closed and $K\subset X$ weakly compact, and assume that $C\cap K= \emptyset$. Then there exists $\nu>0$ such that
	\be
	\label{eq:wdist}
	\textrm{dist}(C,K)=\inf\left\{\|u-v\|_X\,|\,u\in C,\, v\in K\right\}>\nu.
	\ee
\end{lemma}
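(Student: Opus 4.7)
The plan is to argue by contradiction. Suppose that $\textrm{dist}(C,K)=0$. Then by the definition of the infimum there exist sequences $u_n\in C$ and $v_n\in K$ with $\|u_n-v_n\|_X\to 0$. The strategy is to produce a common weak limit of subsequences of $\{u_n\}$ and $\{v_n\}$ that lies simultaneously in $K$ and in $C$, contradicting the disjointness hypothesis.

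The first step is to extract a weakly convergent subsequence from $\{v_n\}$. Since $K$ is weakly compact, the Eberlein--\v{S}mulian theorem ensures that $K$ is also weakly sequentially compact, so there exists a subsequence $\{v_{n_k}\}$ and $v\in K$ with $v_{n_k}\rightharpoonup v$. In the concrete applications of this lemma within the paper $X=L^2(I,\R^l)$ is a Hilbert space, where the sequential reduction is completely standard; for a general normed space one can pass to the completion $\hat X$, apply Eberlein--\v{S}mulian there, and observe that the limit must still belong to $K\subset X$ because $K$ is weakly compact (hence weakly closed in $\hat X$).

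The second step is to transfer this weak convergence to $\{u_{n_k}\}$. For every $\ell\in X^*$,
\[
|\ell(u_{n_k})-\ell(v)|\leq \|\ell\|_{X^*}\,\|u_{n_k}-v_{n_k}\|_X+|\ell(v_{n_k})-\ell(v)|,
\]
and both summands vanish in the limit (the first by the construction of the sequences, the second by $v_{n_k}\rightharpoonup v$). Hence $u_{n_k}\rightharpoonup v$, and since $C$ is weakly closed we conclude $v\in C$, contradicting $C\cap K=\emptyset$. Therefore $\textrm{dist}(C,K)>0$, and any $\nu\in(0,\textrm{dist}(C,K))$ fulfils the conclusion.

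The only delicate ingredient is the appeal to Eberlein--\v{S}mulian in order to turn the abstract topological compactness of $K$ into the sequential compactness used in the contradiction; everything else is a routine manipulation of weak limits and continuous linear functionals. Since in all uses of this lemma in the paper $X$ is a separable Hilbert space, even this step is classical.
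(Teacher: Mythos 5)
Your proposal is correct and follows essentially the same route as the paper: contradiction via sequences $u_n\in C$, $v_n\in K$ with $\|u_n-v_n\|_X\to 0$, Eberlein--\v{S}mulian to extract $v_{n_k}\rightharpoonup v\in K$, and the functional estimate to conclude $u_{n_k}\rightharpoonup v\in C$, contradicting disjointness. The extra remark about passing to the completion in a general normed space is a harmless refinement not present in (and not needed by) the paper's argument.
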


\begin{proof}
	Let us suppose, on the contrary, that $\textrm{dist}(C,K)=0$. Then we can find sequences $\{u_n\}_{n\in\N}\subset C$ and $\{v_n\}_{n\in\N}\subset K$ such that, for every $n\in \N$, we have
	\be
	\|u_n-v_n\|_X<\frac{1}{n}.
	\ee
	Since $K$ is weakly compact, by the Eberlein-Smulian theorem it is also sequentially weakly compact, and there exists $v\in K$ such that $v_n\rightharpoonup v$. We claim that actually $v$ is a weak limit also for the sequence $\{u_n\}_{n\in\N}$, and we will have the absurd since then $v$ would be forced to be also an element of $C$. But this easily follows from the fact that, if $\Lambda$ is any norm-one linear functional on $X$, the following line holds
	\be
	\Lambda(u_n-v)=\Lambda(u_n-v_n)+\Lambda(v_n-v)\leq \|\Lambda\|_{X^*}\|u_n-v_n\|_X+\Lambda(v_n-v)\leq \frac{1}{n}+\Lambda(v_n-v)\to 0.
	\ee
\end{proof}
\begin{cor}\label{cor:impl1}Keeping the notation of Lemma \ref{lemma:coord}, for every soft $u_0\in \mathcal{A}^E$ the function:
\be g_1:I_1\times B(0, r'_1)\times \mathcal{W}_1\to \overline I_1\times \overline B_1\ee
giving for every $(x,y, u)\in I_1\times B(0, r'_1)\times \mathcal{W}_1$ the unique solution to:
\be\label{eq:sol}\phi^u(g_1(x,y, u))=(x,y),\ee
is well defined and continuous (for the strong topologies).

\end{cor}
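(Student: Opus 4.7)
The plan is to prove well-definedness as an immediate consequence of Lemma \ref{lemma:coord} and then address continuity via a sequential compactness argument that exploits the key closure property \eqref{eq:clos} baked into the choice of $\tilde{r}$ in the proof of that lemma.

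For well-definedness, I would fix $u \in \mathcal{W}_1$ and invoke Lemma \ref{lemma:coord}: the map $\phi^u$ is a homeomorphism from $I_1 \times B_1$ onto its image, and this image contains $I_1 \times B(0, r_1')$. Hence for every $(x,y) \in I_1 \times B(0, r_1')$ there is a unique preimage $g_1(x, y, u) \in I_1 \times B_1 \subset \overline{I}_1 \times \overline{B}_1$, and the function $g_1$ is well defined.

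For continuity, I would argue sequentially. Let $(x_n, y_n, u_n) \to (x, y, u)$ with the strong topology on the $u$-variable. Property \eqref{eq:clos} ensures that $g_1(x_n, y_n, u_n) \in \textrm{clos}((\phi^{u_n})^{-1}(I_1 \times B(0,r_1'))) \subset \overline{I}_1 \times \overline{B}_1$, a compact subset of $\R^{m+1}$. So a subsequence $g_1(x_{n_k}, y_{n_k}, u_{n_k})$ converges to some $(\xi, \eta) \in \overline{I}_1 \times \overline{B}_1$. The map $(u, x, y) \mapsto \phi^u(x, y) = (x, \varphi(u + \alpha_{u_0}(x,y)) - \varphi(u))$ is jointly continuous for the strong topologies: $\varphi$ is strongly continuous (by Proposition \ref{propo:contendp} and the strong continuity of the energy), and $\alpha_{u_0}$ depends continuously on $(x, y)$ in $L^2$, with the vanishing in \eqref{eq:norm} absorbing the sign ambiguity at $x = 0$. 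Passing to the limit in $\phi^{u_{n_k}}(g_1(x_{n_k}, y_{n_k}, u_{n_k})) = (x_{n_k}, y_{n_k})$ yields $\phi^u(\xi, \eta) = (x, y)$.

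The main (mild) obstacle is to conclude that $(\xi, \eta) = g_1(x, y, u)$, since a priori the limit could lie on the boundary of $I_1 \times B_1$ where Lemma \ref{lemma:coord} only asserts homeomorphism on the open set. The remedy is to revisit the proof of Lemma \ref{lemma:coord}: the Lipschitz lower bound $\|\psi^u(x, y_1) - \psi^u(x, y_2)\| \geq ca\|y_1 - y_2\|$ holds for all $y_1, y_2$ in the open $B_1$, and extends by continuity of $\psi^u$ to the closure $\overline{B}_1$. This gives injectivity of $\phi^u$ on the entire compact set $\overline{I}_1 \times \overline{B}_1$, so $(\xi, \eta)$ must coincide with the unique preimage $g_1(x, y, u)$. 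Since every subsequential limit is forced to equal $g_1(x, y, u)$, the full sequence converges, which proves the desired strong continuity.
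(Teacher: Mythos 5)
Your proof is correct and follows essentially the same route as the paper's: well-definedness from Lemma \ref{lemma:coord}, then compactness of $\overline I_1\times\overline B_1$, extraction of a convergent subsequence, passage to the limit in $\phi^{u_{n_k}}(g^{n_k})=(x_{n_k},y_{n_k})$ by joint strong continuity, and uniqueness of the solution to force convergence of the full sequence. The only cosmetic difference is how uniqueness on the closed set is justified: you extend the Lipschitz lower bound $\|\psi^u(x,y_1)-\psi^u(x,y_2)\|\geq ca\|y_1-y_2\|$ to $\overline I_1\times\overline B_1$ by continuity, whereas the paper invokes the closure property \eqref{eq:clos} built into the choice of $\tilde r$; both settle the same point.
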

\begin{proof}For every $u\in \mathcal{W}_1$ the inverse of $\phi^u$ is defined on $\overline I_1\times \overline B_1$ and continuous, hence $g_1(x,y, u)$ is well defined.
To prove that it is continuous for the strong topology, consider a sequence $\{(x_n, y_n, u_n)\}_{n}\subset I_1\times B(0, r'_1)\times \mathcal{W}_1$ converging to $(\overline x, \overline y, \overline u)\in I_1\times B(0, r'_1)\times \mathcal{W}_1 $. Then:
\be \phi^{u_n}(g_1(x_n, y_n, u_n))=(x_n, y_n).\ee
Denote by $g^n=g_1(x_n, y_n, u_n)$; since $\overline I_1\times \overline B_1$ is compact, let $g^{n_k}\to \overline g\in \overline I_1\times \overline B_1$. Then, by continuity of $\phi$ we have:
\be (\overline x, \overline y)=\lim_{k\to\infty}\phi^{u_{n_k}}(g^{n_k})=\lim_{k\to \infty}\phi^{\overline u}(\overline g)\ee
which proves $\overline g$ is a solution to \eqref{eq:sol}; this solution is unique (because of \eqref{eq:clos}), hence:
\be \overline g=g_1(\overline x, \overline y, \overline u).\ee
This proves that the bounded sequence $\{g^n\}_n$ has only one accumulation point $\overline g$, hence the all sequence converges itself to $\overline g$.
\end{proof}
\subsection{Lipschitz inverses}
The previous section provided a convenient change of coordinates to express the last $m$ components of $G_{u_0}(u,x,y)$; indeed these were linearized after we changed the differentiable structure, and the equation $G_{u_0}(u,x,y)=w$ can be reinterpreted as
\be
	\label{eqnewcoord}
	G_{u_0}(u,x,\psi)=(\varphi_0(u,x,\psi)-\varphi_0(u,0,0),\psi^u_1,\dotso,\psi^u_m)=w.
\ee
It is evident how to choose the $\psi$ coordinates in order to solve \eqref{eqnewcoord}: this means that the problem is reduced at this point in finding a continuous solution to the single real equation:
\be
	x\mapsto \varphi_0(u,x,\psi^u)-\varphi_0(u,0,0).
\ee
Observe at first that if we want to study the $x$-derivative of $G_{u_0}(u,x,\psi)$, we may reduce ourselves to the case $\psi^u=0$. Indeed, if we switch back for a moment to the $(x,y)$-coordinates, we see that
\begin{align}
	\varphi_0(u,x,y)&=\overline{\lambda}\varphi(u+\alpha_{u_0}(x,y))\\&=\lambda F(u+\alpha_{u_0}(x,y))\\&=\lambda F(u+v_{|x|}^{\textrm{sgn}(x)}+y_1e_1(u_0)+\dotso+y_me_m(u_0)),
\end{align}
where the equality in the second line follows from the fact that $\overline{\lambda}=(\lambda,0)$, by the corank one assumption on singular curves. Then, the identity
\be
	\frac{\partial \varphi_0}{\partial x}\bigg|_{(u,x,y)}=\frac{\partial \varphi_0}{\partial x}\bigg|_{(\widetilde{u},x,0)},\quad\textrm{with }\widetilde{u}=u+y_1e_1(u_0)+\dotso+y_me_m(u_0),
\ee
shows that the $x$-derivative of $G_{u_0}$ at the point $(u,x,y)$ coincides with the $x$-derivative of $G_{u_0}$ evaluated at the point $(\widetilde{u},x,0)$. Finally, notice that equation \eqref{G} implicitly defines $\psi^u(x,0)=0$.

Let us fix $v\in\mathcal P$ and $s_0\in\R$; our objective is to evaluate the limit
\be \label{limit}
	\lim_{s\to 0}\frac{F(u+v_{s+s_0})-F(u+v_{s_0})}{s}.
\ee
By the results in \cite[Section 2.6]{AgrachevSarychev}, the previous limit exists if and only if \be\lim_{s\to 0}\frac{G(u+v_{s+s_0})-G(u+v_{s_0})}{s}\ee exists, with (observe that we give the inline definition of $\gut{v}{t,u}$)
\be
G(v_q)=\eexp\int_{\bt}^{\bt+|q|^{3/4}}(\Put{t}{0,u})_*f_{v_q(t)}dt=\eexp\int_{\bt}^{\bt+|q|^{3/4}}\gut{v_q(t)}{t,u}dt;
\ee
moreover, the following identity holds:
\be
\lim_{s\to 0}\frac{F(u+v_{s+s_0})-F(u+v_{s_0})}{s}=\left(\Put{0}{1,u}\right)_*\left(\lim_{s\to 0}\frac{G(v_{s+s_0})-G(v_{s_0})}{s}\right).
\ee
The Volterra series \cite[Section 2.4]{AgrachevSarychev} provides the expansion:
\be
G(v_q)=Id+\int_{\bt}^{\bt+|q|^{3/4}}\gut{v_q(t)}{t,u}dt+\iint\limits_{\bt\leq \tau\leq t\leq \bt+|q|^{3/4}}\gut{v_q(\tau)}{\tau,u}\circ\gut{v_q(t)}{t,u}d\tau dt+|q|^{3/2}O(1),
\ee
where the last term is a consequence of the fact that $\|v\|_{L^2}\leq 1$, and the change of variables:
\be
	\theta=\frac{t-\bt}{|q|^{3/4}},
\ee
(it weakly depends on $u$, since so does the flow $u\mapsto \Put{t_1}{t_2,u}$). The behavior of \eqref{limit} will be thus determined by a careful analysis, as $s\to 0$, of the expression
\begin{align} \label{expansiong}
	\frac{G(v_{s+s_0})-G(v_{s_0})}{s}&=\frac{1}{s}\bigg(\int_{\bt}^{\bt+|s+s_0|^{3/4}}\gut{v_{s+s_0}(t)}{t,u}dt-\int_{\bt}^{\bt+|s_0|^{3/4}}\gut{v_{s_0}(t)}{t,u}dt\\&+\iint\limits_{\bt\leq \tau\leq t\leq \bt+|s+s_0|^{3/4}}\gut{v_{s+s_0}(\tau)}{\tau,u}\circ\gut{v_{s+s_0}(t)}{t,u}d\tau dt-\iint\limits_{\bt\leq \tau\leq t\leq \bt+|s_0|^{3/4}}\gut{v_{s_0}(\tau)}{\tau,u}\circ\gut{v_{s_0}(t)}{t,u}d\tau dt\\&+(|s+s_0|^{3/2}-|s_0|^{3/2})O(1)\bigg)\\&=\frac{1}{s}\left(I'(s)+I''(s)+(|s+s_0|^{3/2}-|s_0|^{3/2})O(1)\right)
\end{align}

\subsubsection{Estimate on the remainders} In subsequent calculations, we will frequently use the fact that, for any fixed $u,v\in \Lp{2}{I,\R^l}$, the map $t\mapsto \gut{v}{t,u}$ is Lipschitzian. The following lemma gives a quantitative version of this statement.
\begin{lemma}
	\label{lemma:lipschitz}
	Let $u,v\in\Lp{2}{I,\R^l}$, and let $g_1(s,s_0)$ and $g_2(s,s_0)$ be any two real numbers depending on $s$ and $s_0$. Then we have the estimate
	\be
	\int_0^1\left|\gut{v(\theta)}{\bt+g_1(s,s_0)\theta}-\gut{v(\theta)}{\bt+g_2(s,s_0)\theta}\right|d\theta\leq \sqrt{l}a(u)|g_1(s,s_0)-g_2(s,s_0)|\|v\|_{L^2},
	\ee
	where $a$ is some weakly continuous function of $u$ satisfying $a(0)=0$.
\end{lemma}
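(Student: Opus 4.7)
The starting point is the chronological-calculus identity
\be
\frac{d}{dt}\gut{w}{t,u} \;=\; \frac{d}{dt}(\Put{t}{0,u})_* f_w \;=\; (\Put{t}{0,u})_*[f_{u(t)},f_w],
\ee
valid for any fixed vector $w\in\R^l$ (in the sense of Carathéodory, since $u\in L^2$). Integrating this ODE yields the pointwise representation
\be
\gut{w}{\bt+g_1\theta,u} - \gut{w}{\bt+g_2\theta,u} \;=\; \int_{\bt+g_2\theta}^{\bt+g_1\theta}(\Put{s}{0,u})_*[f_{u(s)},f_w]\,ds,
\ee
which converts the $t$-difference into an integral that is linear in $u$.

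\textbf{Step 1: pointwise bound on the integrand.} Working in coordinates on a compact neighborhood of the base point of $u_0$, the vector fields $X_1,\dotsc,X_l$ and all their first-order brackets are bounded, so expanding $f_w=\sum_i w^i X_i$ and $f_{u(s)}=\sum_j u^j(s)X_j$ gives $\|[f_{u(s)},f_w]\|\leq C|u(s)|\,|w|$, with a factor of $\sqrt l$ arising when switching between the $\R^l$-norm and the norms of the individual bracket fields. The operator norm of $(\Put{s}{0,u})_*$ on the tangent bundle is controlled by a Gronwall estimate depending only on $\int_0^1|u|\leq \|u\|_{L^2}$; call this bound $K(u)$. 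Thus the integrand is dominated by $\sqrt l\,K(u)\,|u(s)|\,|w|$.

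\textbf{Step 2: integrate over $\theta$.} Substitute $w=v(\theta)$, take norms, and integrate:
\be
\int_0^1\!\big\|\gut{v(\theta)}{\bt+g_1\theta,u}-\gut{v(\theta)}{\bt+g_2\theta,u}\big\|\,d\theta
\;\leq\; \sqrt l\,K(u)\int_0^1 |v(\theta)| \int_{\bt+g_2\theta}^{\bt+g_1\theta}|u(s)|\,ds\,d\theta.
\ee
A single Cauchy--Schwarz in $\theta$ against $\|v\|_{L^2}$, followed by Cauchy--Schwarz in $s$ against $\|u\|_{L^2}$ (using that the interval of integration has length $|g_1-g_2|\theta$), reduces the double integral to the desired expression. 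Defining $a(u):=K(u)\,\|u\|_{L^2}$ (up to an absolute constant) gives the claimed shape of the bound, and the factor $\|u\|_{L^2}$ ensures $a(0)=0$, consistent with the fact that for $u\equiv 0$ the flow is the identity and $s\mapsto\gut{w}{s,0}=f_w$ is constant in $s$.

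\textbf{Main obstacle and weak continuity of $a$.} The subtle point is the \emph{weak} continuity of $a(u)$, since $\|u\|_{L^2}$ is only weakly lower semicontinuous and $|u(s)|$ is not weakly continuous. The honest way to handle this is to observe, as in Proposition~\ref{propo:contendp} and the discussion of the global chart, that $u\mapsto \Put{s}{0,u}$ is weakly continuous into $C^0$ on compact sets (weak $L^2$-convergence of controls implies uniform convergence of the corresponding trajectories and their Jacobians), so $K(u)$ is weakly continuous. The prefactor $\|u\|_{L^2}$ can be absorbed into a weakly continuous majorant on any weakly compact set (e.g.\ a closed ball in $L^2$, on which the $L^2$-norm is bounded by a constant) -- it is on such neighborhoods that the lemma will be applied in the next section. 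This is the only place where one must be careful; the rest of the argument is a routine two-fold application of Cauchy--Schwarz.
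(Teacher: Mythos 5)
Your strategy is the same in spirit as the paper's (control the time-regularity of $t\mapsto(\Put{t}{0,u})_*$ and then apply Cauchy--Schwarz in $\theta$), but the quantitative step does not produce the stated estimate. After writing the difference as $\int_{\bt+g_2\theta}^{\bt+g_1\theta}(\Put{s}{0,u})_*[f_{u(s)},f_{v(\theta)}]\,ds$ and bounding the integrand by $\sqrt l\,K(u)\,|u(s)|\,|v(\theta)|$, you estimate $\int_{I_\theta}|u(s)|\,ds$, over the interval $I_\theta$ of length $|g_1-g_2|\theta$, by Cauchy--Schwarz against $\|u\|_{L^2}$: this yields $\|u\|_{L^2}(|g_1-g_2|\theta)^{1/2}$, not something linear in $|g_1-g_2|$. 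In whichever order you perform the two Cauchy--Schwarz inequalities, the double integral $\int_0^1|v(\theta)|\int_{I_\theta}|u|\,ds\,d\theta$ is only bounded by $C\,\|u\|_{L^2}\|v\|_{L^2}|g_1-g_2|^{1/2}$, and the linear factor cannot be recovered from the $L^2$ norm of $u$ alone: testing with $u$ an $L^2$-normalized spike of width $\epsilon$ at $\bt$ and $v\equiv\mathrm{const}$ gives a double integral of order $\epsilon^{-1/2}|g_1-g_2|$ whenever $|g_1-g_2|\leq\epsilon$, so no bound of the form $C(\|u\|_{L^2})\,|g_1-g_2|$ holds uniformly. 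Hence ``reduces the double integral to the desired expression'' is exactly where the argument fails, and the loss of exponent is not harmless downstream: the lemma is applied with $|g_1-g_2|\sim|s+s_0|^{3/4}$ and then divided by $s$, so with your exponent the term $A$ in the first-order expansion would give a remainder of size $a(u)|s_0|^{-1/8}$ instead of $a(u)|s_0|^{1/4}$, destroying the estimate on $R(u,s_0)$ in Proposition \ref{propo:estimate}.

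The paper's proof never puts $u$ into an $L^2$ estimate over the short time window: it bounds $\left\|\left(\Put{\bt+g_1(s,s_0)\theta}{0,u}-\Put{\bt+g_2(s,s_0)\theta}{0,u}\right)_*\right\|_\infty\leq a(u)\,|g_1(s,s_0)-g_2(s,s_0)|$ pointwise in $\theta$ (the asserted ``Lipschitz continuity of the flow'', with all the $u$-dependence hidden in the Lipschitz constant), and then uses a single Cauchy--Schwarz in $\theta$ to extract $\sqrt l\,\|v\|_{L^2}$. To repair your argument you would need this sup-type Lipschitz-in-$t$ bound -- which is precisely the delicate point, since your derivative formula only shows that the flow is absolutely continuous in $t$ with $L^2$ derivative, i.e.\ H\"older-$1/2$, for a merely $L^2$ control -- or else accept the $|g_1-g_2|^{1/2}$ bound and redo the subsequent asymptotics. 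Finally, your treatment of the constant does not deliver the stated properties either: $a(u)=K(u)\|u\|_{L^2}$ is not weakly continuous (the $L^2$ norm is only weakly lower semicontinuous), and replacing $\|u\|_{L^2}$ by a constant majorant on a weakly compact ball sacrifices $a(0)=0$, which is part of the statement and tied to the fact that the constant must vanish with the control.
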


\begin{proof}
	We have
	\begin{align}
		&\int_0^1\left|\gut{v(\theta)}{\bt+g_1(s,s_0)\theta}-\gut{v(\theta)}{\bt+g_2(s,s_0)\theta}\right|d\theta\\&=\int_0^1\left|(\Put{\bt+g_1(s,s_0)\theta}{0,u}-\Put{\bt+g_2(s,s_0)\theta}{0,u})_*f_{v(\theta)}\right|d\theta\\&\leq\int_0^1\sum_{i=1}^l\left|v_i(\theta)\right|\left|(\Put{\bt+g_1(s,s_0)\theta}{0,u}-\Put{\bt+g_2(s,s_0)\theta}{0,u})_*X_i\right|d\theta\\&\leq\max_{i=1,\dotso,l}|X_i|\int_0^1\sum_{i=1}^l\left|v_i(\theta)\right|\left\|(\Put{\bt+g_1(s,s_0)\theta}{0,u}-\Put{\bt+g_2(s,s_0)\theta}{0,u})_*\right\|_\infty d\theta\\&\leq a(u)|g_1(s,s_0)-g_2(s,s_0)|\left(\int_0^1\left(\sum_{i=1}^l|v_i(\theta)|\right)^2d\theta\right)^{1/2}\\&\leq \sqrt{l}a(u)|g_1(s,s_0)-g_2(s,s_0)|\|v\|_{L^2},
	\end{align}
	where the second-last line follows by the Lipschitz continuity of the flow and the Cauchy-Schwartz inequality.
\end{proof}

\subsubsection{Expansion of the first-order term} We consider here the asymptotic expansion of the first-order term in \eqref{expansiong}. We have:
\begin{align}
	I'(s)=&|s+s_0|^{1/2}\int_0^1\gut{v(\theta)}{\bt+|s+s_0|^{3/4}\theta,u}d\theta-|s_0|^{1/2}\int_0^1\gut{v(\theta)}{\bt+|s_0|^{3/4}\theta,u}d\theta\\&=\underbrace{\left(|s+s_0|^{1/2}-|s_0|^{1/2}\right)\int_0^1\gut{v(\theta)}{\bt+|s+s_0|^{3/4}\theta,u}d\theta}_{A}-\underbrace{|s_0|^{1/2}\int_0^1\gut{v(\theta)}{\bt+|s_0|^{3/4}\theta,u}-\gut{v(\theta)}{\bt+|s+s_0|^{3/4}\theta,u}d\theta}_{B}.
\end{align}
If we now apply the conclusions of Lemma \ref{lemma:lipschitz}, to both the terms denoted by $A$ and $B$ (observe that in the first case we are implicitly using the fact that, as $v\in\mathcal{P}$, we have $\int_0^1\gut{v(\theta)}{\bt}dt=0$), we deduce that:
\begin{align}
	 \lim_{s\to 0}\frac{|A|}{s}&\leq a(u)\lim_{s\to 0}\frac{\left(|s+s_0|^{1/2}-|s_0|^{1/2}\right)}{s}|s+s_0|^{3/4}=\frac{1}{2}a(u)|s_0|^{1/4},\\
	 \lim_{s\to 0}\frac{|B|}{s}&\leq a(u)\lim_{s\to 0}\frac{\left(|s+s_0|^{3/4}-|s_0|^{3/4}\right)}{s}|s_0|^{1/2}=\frac{3}{4}a(u)|s_0|^{1/4}.
\end{align}

\subsubsection{Expansion of the second-order term} We turn now to the more complicated second order term in \eqref{expansiong}. Let us first recall the equality $w(t)=\int_0^t v(\tau)d\tau$; we have by construction that $w$ is a Lipschitz function from $I$ into $\R^l$. Then we recall two useful formulas \cite[Theorem 11.13]{AgrachevBarilariBoscain}:
\begin{align}
	\label{eq:intbyparts}
	\int_{t_1}^{t_2} \gut{v(t)}{t,u}dt&=\gut{w(t_2)}{t_2,u}-\gut{w(t_1)}{t_1,u}-\int_{t_1}^{t_2}\dgut{w(t)}{t,u}dt,\\
	\iint\limits_{t_1\leq\tau\leq t\leq t_2}[\gut{v(\tau)}{\tau,u},\gut{v(t)}{t,u}]d\tau dt&=\int_{t_1}^{t_2}[\int_{t_1}^{t}\gut{v(\tau)}{\tau,u}d\tau,\gut{v(t)}{t,u}]dt\\&=\int_{t_1}^{t_2}[\gut{v(\tau)}{\tau,u},\int_{\tau}^{t_2}\gut{v(t)}{t,u}dt]d\tau.
\end{align}

We start with the observation that:
\begin{align*} \iint\limits_{\bt\leq\tau\leq t\leq\bt+|q|^{3/4}}\gut{v_q(\tau)}{\tau,u}\circ \gut{v_q(t)}{t,u} d\tau dt&=\frac{1}{2}\int_{\bt}^{\bt+|q|^{3/4}}\gut{v_q(\tau)}{\tau,u} d\tau\circ\int_{\bt}^{\bt+|q|^{3/4}}\gut{v_q(t)}{t,u} dt\\&+\frac{1}{2}\iint\limits_{\bt\leq\tau\leq t\leq\bt+|q|^{3/4}}[\gut{v_q(\tau)}{\tau,u}, \gut{v_q(t)}{t,u}] d\tau dt.\end{align*}
Then (let us call for a moment $s'=s+s_0$)
\begin{align}
	I''(s)&=\frac{1}{2}\left(\int_{\bt}^{\bt+|s'|^{3/4}}\gut{v_{s'}(\tau)}{\tau,u} d\tau\circ\int_{\bt}^{\bt+|s'|^{3/4}}\gut{v_{s'}(t)}{t,u} dt+\iint\limits_{\bt\leq\tau\leq t\leq\bt+|s'|^{3/4}}[\gut{v_{s'}(\tau)}{\tau,u}, \gut{v_{s'}(t)}{t,u}] d\tau dt\right)\\&-\frac{1}{2}\left(\int_{\bt}^{\bt+|s_0|^{3/4}}\gut{v_{s_0}(\tau)}{\tau,u} d\tau\circ\int_{\bt}^{\bt+|s_0|^{3/4}}\gut{v_{s_0}(t)}{t,u} dt+\iint\limits_{\bt\leq\tau\leq t\leq\bt+|s_0|^{3/4}}[\gut{v_{s_0}(\tau)}{\tau,u}, \gut{v_{s_0}(t)}{t,u}] d\tau dt\right)
\end{align}
We take into considerations the terms without the commutator: if we add and subtract the term
\be
	\int_{\bt}^{\bt+|s'|^{3/4}}\gut{v_{s'}(\tau)}{\tau,u}d\tau\circ \int_{\bt}^{\bt+|s_0|^{3/4}}\gut{v_{s_0}(t)}{t,u}dt
\ee
we obtain the expression
\begin{align}
 C=\int_{\bt}^{\bt+|s'|^{3/4}}\gut{v_{s'}(\tau)}{\tau,u}d\tau&\left(\int_{\bt}^{\bt+|s'|^{3/4}}\gut{v_{s'}(t)}{t,u}dt-\int_{\bt}^{\bt+|s_0|^{3/4}}\gut{v_{s_0}(t)}{t,u}dt\right)\\&+\int_{\bt}^{\bt+|s_0|^{3/4}}\gut{v_{s_0}(t)}{t,u}dt\left(\int_{\bt}^{\bt+|s'|^{3/4}}\gut{v_{s'}(\tau)}{\tau,u}d\tau-\int_{\bt}^{\bt+|s_0|^{3/4}}\gut{v_{s_0}(\tau)}{\tau,u}dt\right);
\end{align}
using again Lemma \ref{lemma:lipschitz} and the same arguments as in the expansion of the first-order terms, we easily deduce that:
\be
	\lim_{s\to 0}\frac{|C|}{s}\leq \frac{3}{4}a(u)|s_0|^{1/2}.
\ee

We move now to the commutator term: again we begin with a general computation, namely using \eqref{eq:intbyparts} and the fact that $w_q(\bt)=w_q(\bt+|q|^{3/4})=0$, we can write:
\begin{align}
	\int_{\bt}^{\bt+|q|^{3/4}}[\int_{\bt}^{t}\gut{v_q(\tau)}{\tau,u}d\tau,\gut{v_q(t)}{t,u}]dt&=\int_{\bt}^{\bt+|q|^{3/4}}[\gut{w_q(t)}{t,u},\gut{v_q(t)}{t,u}]dt-\int_{\bt}^{\bt+|q|^{3/4}}[\int_{\bt}^{t}\dgut{w_q(\tau)}{\tau,u} d\tau,\gut{v_q(t)}{t,u}]dt\\&=\int_{\bt}^{\bt+|q|^{3/4}}[\gut{w_q(t)}{t,u},\gut{v_q(t)}{t,u}]dt-\int_{\bt}^{\bt+|q|^{3/4}}[\dgut{w_q(\tau)}{\tau,u},\int_{\tau}^{\bt+|q|^{3/4}}\gut{v_q(t)}{t,u}dt]d\tau\\&=\int_{\bt}^{\bt+|q|^{3/4}}[\gut{w_q(t)}{t,u},\gut{v_q(t)}{t,u}]dt+\int_{\bt}^{\bt+|q|^{3/4}}[\dgut{w_q(\tau)}{\tau,u},\gut{w_q(\tau)}{\tau,u}]d\tau\\&+\int_{\bt}^{\bt+|q|^{3/4}}[\dgut{w_q(\tau)}{\tau,u},\int_{\tau}^{\bt+|q|^{3/4}}\dgut{w_q(t)}{t,u}dt]d\tau.
\end{align}
We immediately realize that just the first summand matters for our purposes. Indeed in both the other summands there is at least a double change of variables $\theta=\frac{t-\bt}{|q|^{3/4}}$ plus a differentiation, which yields a power of $|q|$ not smaller than $7/4$. By virtue of the equalities
\begin{align}
	v_q(\bt+|q|^{3/4}\theta)&=\frac{1}{|q|^{1/4}} v(\theta),\\
	w_q(\bt+|q|^{3/4}\theta)&=\int_{\bt}^{\bt+|q|^{3/4}\theta}\frac{1}{|q|^{1/4}} v\left(\frac{\tau-\bt}{|q|^{3/4}}\right)d\tau\\&=|q|^{1/2}\int_0^\theta v(\zeta)d\zeta=|q|^{1/2}w(\theta),
\end{align}
we have
\begin{align}
	\int_{\bt}^{\bt+|q|^{3/4}}[\gut{w_q(t)}{t,u},&\gut{v_q(t)}{t,u}]dt=\\&=|q|^{3/4}\int_0^1[\gut{w_q(\bt+|q|^{3/4}\theta)}{\bt+|q|^{3/4}\theta,u},\gut{v_q(\bt+|q|^{3/4}\theta)}{\bt+|q|^{3/4}\theta,u}]d\theta=|q|\int_0^1[\gut{w(\theta)}{\bt+|q|^{3/4}\theta,u},\gut{v(\theta)}{\bt+|q|^{3/4}\theta,u}].
\end{align}
Call
\be
	D=|s'|\int_0^1[\gut{w(\theta)}{\bt+|s'|^{3/4}\theta,u},\gut{v(\theta)}{\bt+|s'|^{3/4}\theta,u}]d\theta-|s_0|\int_0^1[\gut{w(\theta)}{\bt+|s_0|^{3/4}\theta,u},\gut{v(\theta)}{\bt+|s_0|^{3/4}\theta,u}]d\theta.
\ee
Adding and subtracing the common term
\be
	|s'|\int_0^1[\gut{w(\theta)}{\bt+|s_0|^{3/4}\theta,u},\gut{v(\theta)}{\bt+|s_0|^{3/4}\theta,u}]d\theta,
\ee
we end up with
\begin{align}
	D&=\underbrace{|s'|\int_0^1[\gut{w(\theta)}{\bt+|s'|^{3/4}\theta,u},\gut{v(\theta)}{\bt+|s'|^{3/4}\theta,u}]-[\gut{w(\theta)}{\bt+|s_0|^{3/4}\theta,u},\gut{v(\theta)}{\bt+|s_0|^{3/4}\theta,u}]d\theta}_{L}\\&+\underbrace{(|s'|-|s_0|)\int_0^1[\gut{w(\theta)}{\bt+|s_0|^{3/4}\theta,u},\gut{v(\theta)}{\bt+|s_0|^{3/4}\theta,u}]d\theta}_{H}.
\end{align}
On the one hand, by Lemma \ref{lemma:lipschitz}, there holds
\begin{align}
	L&=|s'|\int_0^1[\gut{w(\theta)}{\bt+|s'|^{3/4}\theta,u},\gut{v(\theta)}{\bt+|s'|^{3/4}\theta,u}-\gut{v(\theta)}{\bt+|s_0|^{3/4}\theta,u}]+[\gut{w(\theta)}{\bt+|s'|^{3/4}\theta,u}-\gut{w(\theta)}{\bt+|s_0|^{3/4}\theta,u},\gut{v(\theta)}{\bt+|s_0|^{3/4}\theta,u}]d\theta\\&\leq a(u)|s'|(|s'|^{3/4}-|s_0|^{3/4}),
\end{align}
which implies that
\be
	\lim_{s\to 0}\frac{|L|}{s}\leq \frac{3}{4}a(u)|s_0|^{3/4};
\ee
on the other hand, whenever $s_0\neq 0$, we have by similar reasonings (notice that $\|v\|_{L^2}\leq 1$ implies that also $\|w\|_{L^2}\leq 1$):
\be
	\lim_{s\to 0}\frac{H}{s}=\textrm{sgn}(s_0)\int_0^1[\gut{w(\theta)}{\bt,u},\gut{v(\theta)}{\bt,u}]d\theta+R(u,s_0),\quad\textrm{with }|R(u,s_0)|\leq a(u)|s_0|^{3/4},
\ee
while if $s_0=0$ the following is true:
\be
		\limsup_{s\to 0}\frac{H}{s}=\int_0^1[\gut{w(\theta)}{\bt,u},\gut{v(\theta)}{\bt,u}]d\theta,\quad \liminf_{s\to 0}\frac{H}{s}=-\int_0^1[\gut{w(\theta)}{\bt,u},\gut{v(\theta)}{\bt,u}]d\theta
\ee
Collecting all the above calculations we finally obtain
\begin{prop}\label{propo:estimate}
For every $v\in\mathcal P$, the following estimates holds
	\begin{align}
		&\lim_{s\to 0}\frac{F(u+v_{s+s_0})-F(u+v_{s_0})}{s}= \frac{\textrm{sgn}(s_0)}{2}\int_0^1(\Put{0}{1,u})_*[\gut{w(\theta)}{\bt,u},\gut{v(\theta)}{\bt,u}]d\theta+R(u,s_0),\\&\limsup_{s\to 0}\frac{F(u+v_{s})-F(u)}{s}=\int_0^1(\Put{0}{1,u})_*[\gut{w(\theta)}{\bt,u},\gut{v(\theta)}{\bt,u}]d\theta,\\&\liminf_{s\to 0}\frac{F(u+v_{s})-F(u)}{s}=-\int_0^1(\Put{0}{1,u})_*[\gut{w(\theta)}{\bt,u},\gut{v(\theta)}{\bt,u}]d\theta.
	\end{align}
	Moreover, the map $(u,s_0)\mapsto R(u,s_0)$ is weakly continuous with respect to $u$, satisfies the equality $R(u,0)=0$ for every $u$ and, for $|s_0|$ sufficiently small, there holds the estimate
	\be
		|R(u,s_0)|\leq a(u)(|s_0|^{1/4}).
	\ee
\end{prop}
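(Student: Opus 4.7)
The plan is essentially to collect the pieces already assembled in the preceding three sub-subsections and pass them through the flow $\Put{0}{1,u}$. By the identity
\be
\lim_{s\to 0}\frac{F(u+v_{s+s_0})-F(u+v_{s_0})}{s}=\left(\Put{0}{1,u}\right)_*\lim_{s\to 0}\frac{G(v_{s+s_0})-G(v_{s_0})}{s},
\ee
it suffices to compute the limit on the right. I would substitute the Volterra expansion of $G(v_q)$ into the numerator, obtaining the decomposition $I'(s)+I''(s)+(|s+s_0|^{3/2}-|s_0|^{3/2})O(1)$ already written down, and then process the three pieces separately. The last piece, divided by $s$, tends to $\tfrac{3}{2}\mathrm{sgn}(s_0)|s_0|^{1/2}O(1)$ if $s_0\neq 0$ (and vanishes if $s_0=0$), so it is absorbed into $R(u,s_0)$ with the required $|s_0|^{1/4}$ bound.

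For the first-order piece, the estimates already obtained give $|A|/s,|B|/s\le \tfrac{3}{4}a(u)|s_0|^{1/4}$ in the limit, so $I'(s)/s$ contributes only to $R(u,s_0)$. For the second-order piece I would split off the commutator-free part as in the text: the term $C$ satisfies $|C|/s\le \tfrac{3}{4}a(u)|s_0|^{1/2}$, again contributing only to $R$. The commutator part, after the integration-by-parts identity \eqref{eq:intbyparts} and the change of variable $\theta=(t-\bt)/|q|^{3/4}$, reduces to the single relevant integral
\be
|q|\int_0^1\bigl[\gut{w(\theta)}{\bt+|q|^{3/4}\theta,u},\gut{v(\theta)}{\bt+|q|^{3/4}\theta,u}\bigr]\,d\theta,
\ee
since the other two summands generated by \eqref{eq:intbyparts} carry an extra factor $|q|^{3/4}$ from the differentiation of the flow. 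Writing this as $D=L+H$ and invoking Lemma \ref{lemma:lipschitz}, $L/s$ is bounded by $\tfrac{3}{4}a(u)|s_0|^{3/4}$ and hence goes into $R$; the dominant contribution comes from $H=(|s'|-|s_0|)\int_0^1[\gut{w(\theta)}{\bt+|s_0|^{3/4}\theta,u},\gut{v(\theta)}{\bt+|s_0|^{3/4}\theta,u}]\,d\theta$, for which $(|s'|-|s_0|)/s\to\mathrm{sgn}(s_0)$. Evaluating the integrand at $\bt$ instead of $\bt+|s_0|^{3/4}\theta$ costs another $O(|s_0|^{3/4})$ error (again by Lemma \ref{lemma:lipschitz}), which is absorbed into $R$. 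The factor $\tfrac12$ in front of the stated limit comes from the identity
\be
\iint\limits_{\bt\le\tau\le t\le\bt+|q|^{3/4}}\!\!\gut{v_q(\tau)}{\tau,u}\circ\gut{v_q(t)}{t,u}\,d\tau\,dt=\tfrac12\!\int\!\!\int+\tfrac12\!\iint\!\![\cdot,\cdot]
\ee
that was recorded at the beginning of the second-order expansion.

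Finally, to verify the advertised properties of $R$: weak continuity in $u$ follows because each error term in the above splittings is built from integrals involving $\gut{\cdot}{\cdot,u}$ and $\Put{\cdot}{\cdot,u}$, which are weak-strong continuous in $u$; the equality $R(u,0)=0$ is immediate since every bound above has the form $a(u)|s_0|^\alpha$ with $\alpha>0$; and the combined estimate gives $|R(u,s_0)|\le a(u)|s_0|^{1/4}$ for small $|s_0|$, the $|s_0|^{1/4}$ being the slowest decay among $\{|s_0|^{1/4},|s_0|^{1/2},|s_0|^{3/4}\}$. For $s_0=0$ the term $H/s$ has no well-defined sign (the factor $(|s|-0)/s=\mathrm{sgn}(s)$), while every other term still vanishes, so the upper and lower limits differ exactly by the sign of the commutator integral, proving the last two displays of the proposition.

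The main technical obstacle is not a single hard estimate but bookkeeping: one must keep track of the correct power of $|s_0|$ produced by each splitting and make sure that the $|s_0|^{1/2}O(1)$ tail coming from the Volterra remainder, the $|s_0|^{1/4}$ tail from $I'$, and the $|s_0|^{3/4}$ tails from $C$ and $L$ all genuinely absorb into a single weakly continuous function $R(u,s_0)$ with $R(u,0)=0$, rather than producing hidden contributions to the leading-order term. Once this accounting is in place, the statement follows by pushing forward through $(\Put{0}{1,u})_*$.
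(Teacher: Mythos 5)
Your proposal follows essentially the same route as the paper: the proposition is obtained there by "collecting" exactly the computations you reproduce — the Volterra expansion split into $I'(s)$, $I''(s)$ and the $(|s+s_0|^{3/2}-|s_0|^{3/2})O(1)$ remainder, the Lipschitz estimates of Lemma \ref{lemma:lipschitz} applied to $A$, $B$, $C$ and $L$, the integration-by-parts identity \eqref{eq:intbyparts} isolating the commutator term $|q|\int_0^1[\gut{w(\theta)}{\bt+|q|^{3/4}\theta,u},\gut{v(\theta)}{\bt+|q|^{3/4}\theta,u}]d\theta$, the factor $\tfrac12$ from symmetrizing the double integral, and the dominant term $H$ giving $\mathrm{sgn}(s_0)$ (resp.\ the limsup/liminf at $s_0=0$), with all lower-order tails absorbed into $R(u,s_0)$ with the $a(u)|s_0|^{1/4}$ bound. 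The bookkeeping and conclusions match the paper's, so the proposal is correct and not a genuinely different argument.
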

\begin{lemma}\label{lemma:inverse}For every soft $u_0\in \mathcal{A}^E$ there exists a weak neighborhood $\mathcal{W}_2$ of $u_0$, a neighborhood $I_2\subset \R$ of zero, a neighborhood $W_2\subset \R^m$ of zero and a positive number $r_2'>0$ such that for every $(u,\psi)\in \mathcal{W}_2\times W_2$ the function:
\be x\mapsto \varphi_0\left(u,x,\psi\right)-\varphi_0(u,0,0), \quad x\in I_2\ee
is invertible with inverse defined on $(-r_2', r_2')$ and which depends continuously on $u$ and $\psi$. More precisely, there exists:
\be g_2:\mathcal{W}_2\times (-r_2', r_2')\times W_2\to I_2\ee
which is continuous for the strong topology and such that:
\be \textrm{$g_2(u,s,\psi)$ is the unique solution to  $\varphi_0\left(u,g_2(u,s,\psi),\psi\right)-\varphi_0(u,0,0)=s$}\ee
Moreover there exist $r_2''>0$ and a weak neighborhood $\mathcal{V}_2\subset \mathcal{W}_2$ such that $B(u, r_2'')\subset \mathcal{W}_2$ for every $u\in \mathcal{V}_2\cap \{J\leq E\}$.
\end{lemma}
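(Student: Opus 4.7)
The goal is to combine the one-sided derivative estimates from Proposition \ref{propo:estimate} with the weak--strong continuity of $u\mapsto d_uF$ so as to obtain a uniform lower bound on the difference quotient of $x\mapsto \varphi_0(u,x,\psi)$; invertibility follows from strict monotonicity plus continuity at $x=0$, and the ball of radius $r_2''$ is produced exactly as at the end of the proof of Lemma \ref{lemma:coord}.

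\textbf{Derivative bound at fixed $y$.} Using the reduction $\varphi_0(u,x,y)=\lambda F(\widetilde u+v^{\textrm{sgn}(x)}_{|x|})$ with $\widetilde u = u+y_1 e_1(u_0)+\cdots+y_m e_m(u_0)$, Proposition \ref{propo:estimate} gives, for $x_0>0$,
\be
\frac{d}{dx}\bigg|_{x=x_0}\lambda F(\widetilde u + v^+_x) = \tfrac{1}{2}Q^{\widetilde u}(v^+)+\lambda R(\widetilde u,x_0), \qquad |R(\widetilde u,x_0)|\leq a(\widetilde u)\,x_0^{1/4}.
\ee
Since $Q^{u_0}(v^+)>0$ and $u\mapsto Q^u(v^+)$ is weakly continuous (the flow $u\mapsto P^{\cdot,u}_{\bt}$ is so), there exist a weak neighborhood $\mathcal N$ of $u_0$ and constants $c,\delta>0$ such that this derivative is $\geq c$ for $(x_0,\widetilde u)\in (0,\delta)\times \mathcal N$. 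The case $x_0<0$ is handled via $v^-$: since $Q^{u_0}(v^-)<0$ and $\partial_x=-\partial_{|x|}$ on $\{x<0\}$, the two sign flips cancel. Together with continuity at zero (from $\|v^\pm_x\|_{L^2}=|x|^{1/8}\|v^\pm\|_{L^2}\to 0$), integration yields
\be
\varphi_0(\widetilde u,x_2,0)-\varphi_0(\widetilde u,x_1,0)\geq c(x_2-x_1), \qquad -\delta<x_1<x_2<\delta,\ \widetilde u\in \mathcal N.
\ee

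\textbf{Transfer to $\psi$-coordinates and conclusion.} For $(u,\psi)$ close to $(u_0,0)$, set $y(x)=g_1(x,\psi,u)$ and $\widetilde u(x)=u+y(x)\cdot e$. For $x_0, x_0+h$ on the same side of zero, decompose $A(u,x_0+h,\psi)-A(u,x_0,\psi)=I+II$, where $II=\lambda F(\widetilde u(x_0)+v^\pm_{|x_0+h|})-\lambda F(\widetilde u(x_0)+v^\pm_{|x_0|})\geq ch$ by the previous step, and
\be
I = \int_0^1 \lambda\, d_{\widetilde u_t + v^\pm_{|x_0+h|}} F\cdot\bigl[(y(x_0+h)-y(x_0))\cdot e\bigr]\,dt
\ee
along an affine segment $\widetilde u_t$ in $\widetilde u$-space. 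Since $\lambda\, d_{u_0}F\,e_i(u_0)=0$ (the $e_i(u_0)$ span a complement of $\ker d_{u_0}\varphi$ inside $\textrm{im}\,d_{u_0}\varphi$, which $(\lambda,0)$ annihilates), the weak--strong continuity of $u\mapsto d_uF$ from Proposition \ref{propo:contendp} gives $|\lambda\, d_u F\,e_i|<\epsilon$ on a small weak neighborhood; combined with the Lipschitz estimate $|y(x_0+h)-y(x_0)|\leq C|h|$, which follows from the lower bound $\|\psi^u(x,y_1)-\psi^u(x,y_2)\|\geq ca\|y_1-y_2\|$ proved inside Lemma \ref{lemma:coord} and from the linear-in-$s$ control on $F(\widetilde u+v^\pm_s)-F(\widetilde u)$ implied by Proposition \ref{propo:estimate}, one obtains $|I|\leq \epsilon C'|h|$, hence $A(u,x_0+h,\psi)-A(u,x_0,\psi)\geq (c/2)h$ for $\epsilon$ small enough. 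Strict monotonicity of $x\mapsto A(u,x,\psi)$ on each side of zero, together with continuity at zero, produces a homeomorphism of $I_2=(-\delta,\delta)$ onto an interval containing $(-r_2',r_2')$ (after possibly shrinking $\mathcal W_2$ and $W_2$ so that $|A(u,0,\psi)|$ is uniformly small); joint continuity of $g_2$ in $(u,s,\psi)$ for the strong topology follows by the same sequential compactness argument as in Corollary \ref{cor:impl1}. The pair $(\mathcal V_2,r_2'')$ is obtained as in Lemma \ref{lemma:coord}, by shrinking $\mathcal W_2$ with a positive margin on each weakly continuous quantity above and invoking Lemma \ref{lemma:wdist} on the weakly closed $(\mathcal W_2)^c$ and the weakly compact $\overline{\mathcal V_2}\cap\{J\leq E\}$. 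The main obstacle is the decomposition $I+II$: since the $\psi$-coordinates entangle $\widetilde u$ and $x$ through $g_1$, Proposition \ref{propo:estimate} does not apply directly, and one must use the abnormality identity $\lambda\, d_{u_0}F\,e_i=0$ to kill the correction term $I$ to first order.
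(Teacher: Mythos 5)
Your frozen-$y$ monotonicity step, the identity $\lambda\, d_{u_0}F\,e_i(u_0)=0$ (indeed $\lambda\, d_{u_0}F\equiv 0$ since $u_0$ is abnormal with lift $\lambda$), and the endgame (weakly continuous quantities, Lemma \ref{lemma:wdist} for $(\mathcal V_2,r_2'')$, the Corollary \ref{cor:impl1}-type argument for continuity of $g_2$) are all sound and consistent with the paper. The genuine gap is the Lipschitz estimate $|y(x_0+h)-y(x_0)|\le C|h|$ for the fixed-$\psi$ section $y(x)=g_1(x,\psi,u)$, on which your domination of $I$ by $II$ rests. From the lower bound $\|\psi^u(x,y_1)-\psi^u(x,y_2)\|\ge ca\|y_1-y_2\|$ you only get
\be
\|y(x_0+h)-y(x_0)\|\le (ca)^{-1}\,\|\psi^u(x_0+h,y(x_0))-\psi^u(x_0,y(x_0))\|,
\ee
so you need $x\mapsto\psi^u(x,y)$ to be Lipschitz at fixed $y$. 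But the components $\psi_i^u$ are, through the matrix $A$ in \eqref{G}, combinations of the $F$-components, which are indeed Lipschitz in $x$ by Proposition \ref{propo:estimate}, \emph{and} of the energy component, which is not: it contains $\tfrac12\|v^{\pm}_{|x|}\|^2=\tfrac12|x|^{1/4}\|v^{\pm}\|^2$ and $\langle u,v^{\pm}_{|x|}\rangle=O(|x|^{1/2})$, hence is only H\"older of exponent $1/4$ near $x=0$. The best available bound is therefore $\|y(x_0+h)-y(x_0)\|\le C\left(|h|+\bigl||x_0+h|^{1/4}-|x_0|^{1/4}\bigr|\right)$, and for $x_0$ near $0$ this yields only $|I|\le C'\epsilon|h|^{1/4}$, which dominates $II\ge c|h|$ for small $|h|$ no matter how small $\epsilon$ (i.e.\ the weak neighborhood) is taken. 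Thus strict monotonicity, and with it injectivity, of $x\mapsto \varphi_0(u,x,\psi)$ is not established exactly in the critical region $x\approx 0$, which is the whole point of the lemma.

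For comparison, the paper never estimates the motion of $y(x,\psi)$: it uses the identity $\varphi_0(u,x,y)=\lambda F(\widetilde u+v^{\textrm{sgn}(x)}_{|x|})=\varphi_0(\widetilde u,x,0)$ to reduce the $x$-derivative to the frozen-$y$ case, transfers the resulting lower bound on the Clarke $x$-subdifferential $H_0$ to a neighborhood in $(u,\psi)$ by continuity of $(u,y)\mapsto u+y\cdot e$ and of $\psi\mapsto y$, and then gets injectivity from the Lipschitz mean value theorem \cite[Theorem 2.6.5]{Clarke2} and a uniform interval in the image from \cite[Lemma 5]{Clarke} -- i.e.\ Clarke's nonsmooth inverse function package instead of an explicit $I+II$ decomposition with integration. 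To salvage your route you would either have to show that the non-Lipschitz part of the chart enters only through directions whose contribution to $\lambda F$ is of higher order, or reformulate the comparison so that it is always made at frozen $y$, as the paper does.
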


\begin{proof}
	Consider the map $G_0(u,x,\psi)=\varphi_0(u,x,\psi)-\varphi_0(u,0,0)$. We know from Proposition \ref{propo:estimate} that $G_0(u,x,0)$ is both weakly continuous in the $u$-variable and Lipschitz continuous in the $x$-variable. Let us define: \be H_0(u,x,\psi)=\min \left|\frac{\partial G_0}{\partial x}\bigg|_{(u,x,\psi)}\right|\ee where the min is taken over all the elements in the Clarke $x$-subderivative of $G_0.$ Our choice of $v^{\pm}$ ensures that both
	\be
		\liminf_{x\to 0^{\pm}}\frac{\partial G_0(u,x,0)}{\partial x}=\pm\int_0^1\langle(\Put{\bt}{1,u_0})^*\lambda,[f_{w^{\pm}(\theta)},f_{v^{\pm}(\theta)}]\rangle d\theta>0;
	\ee
	in particular the subdifferential $H(u,x,0)$ is not zero. By the Clarke's Implicit Function Theorem \cite{Clarke} and the weak continuity of $u\mapsto H_0(u,x,0)$, we deduce that there exist a weak neighborhood $\mathcal{W}_2'$ of $u_0$ and $r>0$ such that $G_0(u,x,0):(-r,r)\to \R$ is an homeomorphism onto its image, for every $u\in\mathcal{W}_2'$.
	
	Consider the map $(y_1,\dotso,y_m)\mapsto y_1e_1(u_0)+\cdots+y_me_m(u_0)$: being continuous, there exist a neighborhood $W_2'\subset\R^m$ of zero and a weak neighborhood $\mathcal{W}_2''$ of $u_0$ such that $u+y_1e_1(u_0)+\cdots +y_me_m(u_0)$ belongs to $\mathcal{W}_2'$ for every $(u,y)\in \mathcal{W}_2''\times W_2'$. Finally, since the map $(\psi_1,\dotso,\psi_m)\mapsto (y_1,\dotso,y_m)$ is continuous as well, we conclude that there exists also a neighborhood $W_2''\subset\R^m$ of zero such that $H_0(u,x,\psi)$ is of maximal rank whenever $(u,\psi)\in\mathcal{W}_2''\times W_2''$.
	
	We take advantage of this fact to show that, for every $(u,\psi)\in\mathcal{W}_2''\times W_2''$, the map $G_0(u,\cdot,\psi):(-r,r)\to \R$ is injective. Indeed, let $a$ be any positive number such that $H(u,x,\psi)>a$ on $\mathcal{W}_2''\times(-r,r)\times W_2''$; then, by the Lipschitz version of the mean value theorem \cite[Theorem 2.6.5]{Clarke2}, we infer that: \be|G_0(u,x_1,\psi)-G_0(u,x_2,\psi)|>a|x_1-x_2|.\ee Again we can use \cite[Lemma 5]{Clarke} to conclude that, whenever $r'<r$, then: \be G_0(u,\cdot,\psi)(-r',r')\supset G_0(u,0,\psi)+(-r'a,r'a);\ee moreover, the weak continuity of $u\mapsto G_0(u,x,\psi)$, and the fact that $G_0(u,0,0)=0$, imply that for every $r>0$, there exists a weak open set $\mathcal{W}_2^{\textrm{weak}}(r)\times W_2(r)$ on which $|G_0(u,0,\psi)|<ar'/3$. Then, if $r'<r$, for every $(u,\psi)\in(\mathcal{W}_2''\cap\mathcal{W}_2^{\textrm{weak}}(r'))\times(W_2''\cap W_2(r'))$ we will have that:
	\be
		G_0(u,\cdot,\psi)\supset G_0(u,0,\psi)+(-r'a,r'a)\supset (-2/3r'a,2/3r'a).
	\ee
	Choose $0<\widetilde{r}<r/2$, and let $\mathcal{W}_3^{\textrm{weak}}=\mathcal{W}_2''\cap\mathcal{W}_2^{\textrm{weak}}(\widetilde{r})$ and $W_3=W_2''\cap W_2(\widetilde{r})$; our previous arguments then show that, for every $(u,\psi)\in\mathcal{W}_3^{\textrm{weak}}\times W_3$, the map \be G_0(u,\cdot,\psi):(-\widetilde{r},\widetilde{r})\to \R \ee is an homeomorphism onto its image, and its image contains $(-2/3\widetilde{r}a,2/3\widetilde{r}a)$.
	
	Similarly as in Lemma \ref{lemma:coord}, we define the weakly continuous functions:
	\be
		\alpha_1(u)=\min_{(x,\psi)\in (-\widetilde{r},\widetilde{r})\times \overline{W}_3} H_0(u,x,\psi),\quad\textrm{and }\quad \alpha_2(u)=\max_{\psi\in\overline{W}_3}|G_0(u,0,\psi)|
	\ee
	and, for $\epsilon>0$ small enough, the weakly open sets:
	\be \mathcal{W}_2=\{\alpha_1>a, \alpha_2<a\tilde r/3\},\quad \mathcal{V}_2=\{\alpha_1>a+\epsilon, \alpha_2<a\tilde r/3-\epsilon\}, \quad W_2=W_3\quad I_2=(-\widetilde{r},\widetilde{r}). \ee
	Then we conclude as in Lemma \ref{lemma:coord}, by choosing e.g. $r_2'=2/3\widetilde{r}a$, and where the existence of $r_2''$ is guaranteed by applying Lemma \ref{lemma:wdist} to the sets $(\mathcal{W}_2)^c$ and $\overline{\mathcal{V}}_2\cap\{J\leq E\}$. Finally, the assertion on the existence and the continuity of the function $g_2$ follows literally as in Corollary \ref{cor:impl1}.
\end{proof}

\subsection{Proof of Proposition \ref{prop:proof}}\label{sec:proof}
Let $u_0\in \mathcal{A}^E$ be soft and define $\mathcal{W}=\mathcal{W}_1\cap \mathcal{W}_2$ and $\mathcal{V}=\mathcal{V}_1\cap \mathcal{V}_2$. Let $r_2=\min\{r_1'', r_2''\}$ and $r_1, r_3>0$ be such that:
\be B(0, r_1)\subset \left(I_1\cap(-r_2', r_2') \right)\times \left(B(0, r_1')\cap W_2\right)\quad\textrm{and}\quad B(0, r_3)\supset I_2\times B_1.\ee
(All these objects have been constructed in Lemma \ref{lemma:coord} and Lemma \ref{lemma:inverse}.)
 Then the function $g$ defined by:
\be g(x, y, u)=(g_2(x,u), g_1(g_2(x,u), y, u))\ee
verifies the required properties.

	\section{An implicit function theorem}\label{sec:implicit}
\begin{thm}\label{thm:implicit}Assume all abnormal controls with $J\leq E$ are soft. There exists a neighborhood $\mathcal{W}(\mathcal{A}^E)$ and positive numbers $r_1, r_2, r_3>0$ such that for every $u_0\in \mathcal{W}(\mathcal{A}^E)$ there exists a function:
\be \sigma_{u_0}:B(0, r_1)\times B(u_0, r_2)\to L^{2}(I, \R^l)\ee
which is continuous for the strong topology and such that:
\be \sigma_{u_0}(0, u)=0\quad \textrm{and}\quad \varphi(u+\sigma_{u_0}(w,u))=\varphi(u)+w\quad \forall (w,u)\in B(0, r_1)\times B(u_0, r_2).\ee
Moreover the family $\{\sigma_{u_0}\}_{u_0\in \mathcal{W}(\mathcal{A}^E)}$ is equicontinuous.
\end{thm}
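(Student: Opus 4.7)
My plan is to patch together the local cross-sections of Proposition \ref{prop:proof} by covering the weakly compact set $\mathcal A^E$ with finitely many of the associated weak neighborhoods. I first observe that $\mathcal A^E$ is weakly compact: the sublevel $\{J\le E\}$ is a closed ball in a reflexive space, and $\mathcal A$ is weakly closed since, by Proposition \ref{propo:contendp}, $u_n\rightharpoonup u$ implies operator-norm convergence $d_{u_n}F\to d_uF$, so unit corank-one covectors $\lambda_n$ annihilating $d_{u_n}F$ admit a subsequential limit $\lambda$ annihilating $d_uF$ (lower semicontinuity of the $L^2$-norm handles $J(u)\le E$).

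For each $\tilde u\in\mathcal A^E$, which is soft by hypothesis, Proposition \ref{prop:proof} yields weak neighborhoods $\mathcal V_{\tilde u}\subset\mathcal W_{\tilde u}$, positive radii $r_1^{\tilde u},r_2^{\tilde u},r_3^{\tilde u}$, and a strongly continuous $g_{\tilde u}$ with $G_{\tilde u}(u,g_{\tilde u}(w,u))=w$. By weak compactness, extract a finite subcover $\mathcal V_1,\dots,\mathcal V_N$ of $\mathcal A^E$ with base points $\tilde u_1,\dots,\tilde u_N$, and set $r_1=\min_i r_1^i$, $r_2=\tfrac12\min_i r_2^i$. Define the strong open neighborhood
\begin{equation}
\mathcal W(\mathcal A^E)=\bigcup_{i=1}^N\bigl\{u_0\,:\,\mathrm{dist}(u_0,\mathcal V_i\cap\{J\le E\})<r_2\bigr\}.
\end{equation}
Given $u_0\in\mathcal W(\mathcal A^E)$, select $i=i(u_0)$ and a witness $u^*\in\mathcal V_i\cap\{J\le E\}$ with $\|u_0-u^*\|<r_2$; since $2r_2\le r_2^i$ and Proposition \ref{prop:proof} gives $B(u^*,r_2^i)\subset\mathcal W_i$, we have $B(u_0,r_2)\subset\mathcal W_i$, so $g_i(w,u)$ is well defined on $B(0,r_1)\times B(u_0,r_2)$. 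I then put $\sigma_{u_0}(w,u)=\alpha_{\tilde u_i}(g_i(w,u))$. Unpacking the identity $G_{\tilde u_i}(u,g_i(w,u))=w$ via \eqref{equation} gives exactly $\varphi(u+\sigma_{u_0}(w,u))=\varphi(u)+w$, and uniqueness of the solution built in Proposition \ref{prop:proof} (via Lemma \ref{lemma:coord} and Lemma \ref{lemma:inverse}) forces $g_i(0,u)=0$, so $\sigma_{u_0}(0,u)=\alpha_{\tilde u_i}(0)=0$. Strong continuity of $\sigma_{u_0}$ is then immediate from that of $g_i$ and of the affine map $\alpha_{\tilde u_i}:\R^{m+1}\to L^2(I,\R^l)$.

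The main obstacle I anticipate is the equicontinuity claim. Because the selector $i(u_0)$ takes only $N$ values, this reduces to a uniform continuity statement for each of the finitely many compositions $\alpha_{\tilde u_i}\circ g_i$ restricted to $B(0,r_1)\times\{u:\mathrm{dist}(u,\mathcal V_i\cap\{J\le E\})<r_2\}$. The image of $g_i$ lies in the compact ball $\overline{B(0,r_3^i)}\subset\R^{m+1}$ on which $\alpha_{\tilde u_i}$ is uniformly continuous, so the real task is to upgrade the pointwise strong continuity of $g_i$ to a uniform modulus. This should follow from the Lipschitz-type estimates implicit in Lemmas \ref{lemma:coord} and \ref{lemma:inverse}: the lower bound $a$ on the relevant subdifferentials and the coefficient $a(u)$ appearing in Proposition \ref{propo:estimate} are weakly continuous in $u$, hence uniformly bounded away from their bad values on the weakly compact strips $\{\mathrm{dist}(\,\cdot\,,\mathcal V_i\cap\{J\le E\})\le r_2\}\cap\{J\le E+1\}$ we may restrict to. Carrying out this quantitative bookkeeping is where the residual work lies; everything else is a direct unpacking of Proposition \ref{prop:proof}.
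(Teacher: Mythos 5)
Your construction is essentially the paper's proof: weak compactness of $\mathcal A^E$, a finite subcover by the neighborhoods $\mathcal V_i$ coming from Proposition \ref{prop:proof}, radii taken as minima, and $\sigma_{u_0}=\alpha_{\tilde u_i}\circ g_i$. Two points in your write-up deserve comment.

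First, the equicontinuity clause, which you leave as ``residual quantitative bookkeeping'', is not meant to carry a uniform modulus over the strips you describe. Since every $\sigma_{u_0}$ is a restriction of one of the $N$ fixed strongly continuous maps $\alpha_{\tilde u_i}\circ g_i$, the family is equicontinuous for the elementary reason that it is (essentially) finite -- this one-line observation is the entirety of the paper's argument, and it is all that the statement claims. Your reduction to the finitely many compositions is therefore already the whole proof, and no Lipschitz estimates from Lemma \ref{lemma:coord}, Lemma \ref{lemma:inverse} or Proposition \ref{propo:estimate} need to be revisited. (You are right that the later use in Theorem \ref{thm:serre} invokes a uniformity at the \emph{varying} base points $(0,u_0)$, which a finiteness argument alone does not literally give; but that refinement is not part of the present statement, so by stopping where you did you have left the stated ``Moreover'' unproved even though it was one sentence away.) A small related slip: $\alpha_{\tilde u_i}$ is not affine in $(x,y)$ -- the summand $v^{\mathrm{sgn}(x)}_{|x|}$ is nonlinear in $x$, with $\|v^{\pm}_{|x|}\|=|x|^{1/8}$ -- but it is continuous, which is all you use.

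Second, your $\mathcal W(\mathcal A^E)$ is a norm-metric enlargement of $\bigcup_i\bigl(\mathcal V_i\cap\{J\le E\}\bigr)$, hence strongly open but not weakly open. The paper instead sets $\mathcal W(\mathcal A^E)=\mathcal V_{u_1}\cup\cdots\cup\mathcal V_{u_k}$, which is weakly open, and Section \ref{sec:regular} relies on exactly this (it is referred to there as the \emph{weak} neighborhood of $\mathcal A^E$) so that $\mathcal B^E=\mathcal U(y)^E\setminus\mathcal W(\mathcal A^E)$ is weakly compact. As a proof of the isolated statement your set is an admissible (strong) neighborhood, and your witness-point device correctly repairs the fact that Proposition \ref{prop:proof} guarantees $B(u,r_2)\subset\mathcal W$ only for $u\in\mathcal V\cap\{J\le E\}$ -- a restriction the paper silently drops; but to serve the rest of the paper you should keep the weakly open union and simply record that the ball property (and hence the theorem) is only needed for $u_0$ with $J(u_0)\le E$, the only regime in which it is invoked later.
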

\begin{proof}
Every $u\in \mathcal{A}^E$ is soft and we can consider the weak open sets $\mathcal{V}(u)\subset\mathcal{W}(u)$, the positive numbers $r_1(u), r_2(u)>0$ and the functions $g=g_u$ and $\alpha_u$ constructed in Section \ref{sec:cross}.
Notice that $\mathcal{A}^E$ is weakly compact (it is a weakly closed set in the weakly closed ball $\{J\leq E\}$). Then $\{\mathcal{V}(u)\}_{u\in \mathcal{A^E}}$ is a weak cover of $\mathcal{A}^E$ and consequently there exist $u_1, \ldots, u_k\in \mathcal{A}^E$ such that:
\be \mathcal{W}(\mathcal{A}^E)=\mathcal{V}_{u_1}\cup\cdots\cup \mathcal{V}_{u_k}\ee
is an open neighborhood of $\mathcal{A}^E.$ Set $r_1=\min_{i}\{r_1(u_i)\}$ and $r_2=\min_{i}\{r_2(u_i)\}$.

Pick $u\in \mathcal{W}(\mathcal{A}^E)$. Then $u\in \mathcal{W}_{u_i}$ for some $i\in\{1, \ldots, k\}$ and we define the function $\sigma_u$ by:
\be \sigma_u(w, v)=\alpha_{u_i}(g_{u_i}(w, v))\quad \textrm{for $(w, v)\in B(0, r_1)\times B(u, r_2)$}.\ee
Notice that $g_{u_i}$ was defined on $B(0, r_1(u_i))\times \mathcal{W}_{u_i}$; on the other hand since $u\in \mathcal{V}_{u_i}$ then $B(u, r_2)\subset \mathcal{W}_{u_i}$ and the domain of $\sigma_u$ is contained in the domain of definition of $g_{u_i}.$

The family $\{\sigma_{u}\}_{u\in \mathcal{W}(\mathcal{A}^E)}$ is equicontinuous simply because it is finite.
\end{proof}	
	
		\section{Regular controls: gradient flow}\label{sec:regular}

Throughout this section we will assume that all abnormal controls with $J\leq E$ are soft.

\begin{remark}
	As the endpoint map is weakly continuous, the set $F^{-1}(y)$ is weakly closed. Then $\mathcal{U}(y)^E=\{u\in\mathcal{U}\,|\,J(u)\leq E\}\cap F^{-1}(y)$ is weakly compact in $\mathcal{U}^E$, this latter set also being weakly compact.
\end{remark}

Let $\mathcal{W}(\mathcal{A}^E)$ be the weak neighborhood of the abnormal controls with energy less than $E$ constructed in the previous section (abusing of the notation, we tacitly adopt the convention that $\mathcal{W}(\mathcal{A}^E)$ and $\mathcal{A}^E$ are to be intended in the relative topology of $\mathcal{U}(y)^E$; they are, respectively, weakly open and weakly compact), and let $\mathcal{B}^E=\mathcal{U}(y)^E\setminus \mathcal{W}(\mathcal{A}^E)$. Now, $\mathcal{B}^E$ and $\mathcal{A}^E$ are two disjoint weakly compact subsets in $\mathcal{U}(y)^E$, therefore they can be separated by means of weak neighborhoods $\mathcal{V}(\mathcal{A}^E)$ and $\mathcal{V}(\mathcal{B}^E)$. In particular we have the following sequence of inclusions:
\be
	\label{eq:inclusions}
		\mathcal{B}^E\subset\mathcal{V}(\mathcal{B}^E)\subset \mathcal{U}(y)^E\setminus\mathcal{V}(\mathcal{A}^E),
\ee
and, by Lemma \ref{lemma:wdist}, the weakly compact set $\mathcal{U}(y)^E\setminus\mathcal{V}(\mathcal{A}^E)$ is strongly separated from $\mathcal{A}^E$. Exploiting this distance from the abnormal set, the idea is to mimic the classical deformation theory via the gradient flow. If we call
\be
	f=J\big|_{\mathcal{U}(y)^E\setminus\mathcal{V}(\mathcal{A}^E)}
\ee
the restriction of the energy, we can apply verbatim the arguments of \cite[Proposition 10]{BoarottoLerario} in this case: the only salient fact to be observed is that the set $\mathcal{U}(y)^E\setminus\mathcal{V}(\mathcal{A}^E)$ is weakly closed in $\mathcal{U}(y)^E$, and, as such, it contains all the weak limits of its sequences (in particular they are all regular points of the endpoint map). Having this in mind, we obtain the following.

\begin{prop}[Palais-Smale condition]
	\label{prop:PS}
	The function $f$ satisfies the Palais-Smale condition, i.e any sequence $\{u_n\}_{n\in \N}\subset\mathcal{U}(y)^E\setminus\mathcal{V}(\mathcal{A}^E)$ such that
	\be
		\lim_{n\to\infty}d_{u_n}f\to 0
	\ee
	admits a convergent subsequence.
\end{prop}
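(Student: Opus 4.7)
My plan is to follow the standard recipe for Palais--Smale on a constrained Hilbert problem, exploiting the strong separation between the domain of $f$ and $\mathcal{A}^E$ to avoid any interference of abnormal points. Concretely, take any sequence $\{u_n\}\subset \mathcal{U}(y)^E\setminus\mathcal{V}(\mathcal{A}^E)$ with $d_{u_n}f\to 0$. Since $\mathcal{U}(y)^E\setminus\mathcal{V}(\mathcal{A}^E)$ is weakly closed in the weakly compact set $\mathcal{U}(y)^E$, it is itself weakly compact; so we may extract a subsequence (not relabeled) with $u_n\rightharpoonup u^*\in \mathcal{U}(y)^E\setminus\mathcal{V}(\mathcal{A}^E)$. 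Because $\mathcal{V}(\mathcal{A}^E)\supset\mathcal{A}^E$ and, under the standing softness assumption, $\mathcal{A}^E$ already contains \emph{every} singular control of energy at most $E$, the limit $u^*$ is a regular point of $F$.

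At the level of differentials, Proposition \ref{propo:contendp} gives $d_{u_n}F\to d_{u^*}F$ in the operator norm. Since $d_{u^*}F$ is surjective onto $\R^m$, its smallest singular value $\sigma>0$, and operator-norm convergence implies that $d_{u_n}F$ is surjective with smallest singular value at least $\sigma/2$ for $n$ large. Now, at any regular point $u$ of $F$, under the identification $L^2\simeq (L^2)^*$ one has $\nabla J(u)=u$, so the gradient of the restriction $f$ is the orthogonal projection of $u$ onto the tangent space $\ker d_uF$. The Palais--Smale hypothesis therefore rewrites as
\be
  u_n=(d_{u_n}F)^*\lambda_n+\rho_n,\qquad \rho_n=P_{\ker d_{u_n}F}(u_n),\quad \|\rho_n\|\to 0,
\ee
with $\lambda_n\in\R^m$ uniquely determined by projection onto the range of $(d_{u_n}F)^*$. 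The uniform lower bound on the singular values yields $\|(d_{u_n}F)^*\lambda_n\|\geq (\sigma/2)\|\lambda_n\|$, and since $\|u_n\|\leq\sqrt{2E}$ this forces $\{\lambda_n\}$ to be bounded in $\R^m$.

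Passing to a further subsequence with $\lambda_n\to\lambda^*\in\R^m$, the operator-norm convergence $(d_{u_n}F)^*\to (d_{u^*}F)^*$ combined with boundedness of $\lambda_n$ gives $(d_{u_n}F)^*\lambda_n\to (d_{u^*}F)^*\lambda^*$ strongly in $L^2$, while $\rho_n\to 0$ strongly by assumption; hence $u_n\to (d_{u^*}F)^*\lambda^*$ in norm, which is the Palais--Smale conclusion. The main subtle point, and the whole reason for the neighborhood construction of Section \ref{sec:regular}, is the very first step: guaranteeing that the weak limit $u^*$ is \emph{strongly} away from $\mathcal{A}^E$, so that $u^*$ is genuinely a regular point of $F$ and the argument closes. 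This is exactly what Lemma \ref{lemma:wdist} and the inclusions in \eqref{eq:inclusions} provide; everything after that is the classical argument as in \cite[Proposition 10]{BoarottoLerario}.
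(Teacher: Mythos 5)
Your proof is correct and takes essentially the same route as the paper, which simply applies the classical argument of \cite[Proposition 10]{BoarottoLerario} after observing that $\mathcal{U}(y)^E\setminus\mathcal{V}(\mathcal{A}^E)$ is weakly closed in the weakly compact set $\mathcal{U}(y)^E$, so the weak limit of a Palais--Smale sequence stays in it and is therefore a regular point of the endpoint map. Your explicit steps (weakly convergent subsequence, operator-norm convergence $d_{u_n}F\to d_{u^*}F$ from Proposition \ref{propo:contendp}, boundedness of the multipliers $\lambda_n$ via the uniform lower bound on singular values, and the resulting strong convergence $u_n\to (d_{u^*}F)^*\lambda^*$) are precisely the cited argument spelled out.
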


As an immediate consequence we also recover the following standard lemma.

\begin{cor}[Palais-Smale lemma]
	\label{cor:PSL}
	Let $0<s<E$, and let $\mathcal{N}$ be any open set contained in $\mathcal{U}(y)^E$ (possibly empty). Assume that
	\be
	(\mathcal{U}(y)^E\setminus(\mathcal{V}(\mathcal{A}^E)\cup \mathcal{N}))\cap\{J=s\}\cap\mathcal{C}=\emptyset;
	\ee
	then there exists a positive constant $0<\eta(s)<1$ such that
	\be
		\|d_uf\|>\eta,\quad\forall u\in(\mathcal{U}(y)^E\setminus(\mathcal{V}(\mathcal{A}^E)\cup \mathcal{N}))\cap\{s-\eta< J< s+\eta\}.
	\ee
\end{cor}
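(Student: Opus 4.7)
I would prove Corollary \ref{cor:PSL} by contradiction, in direct analogy with the classical Palais-Smale lemma.

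Suppose the conclusion fails. Then for every $n\in\N$ we can find
\be
u_n\in (\mathcal{U}(y)^E\setminus(\mathcal{V}(\mathcal{A}^E)\cup \mathcal{N}))\cap\bigl\{s-\tfrac{1}{n}<J<s+\tfrac{1}{n}\bigr\}
\ee
with $\|d_{u_n}f\|\le \tfrac{1}{n}$. Since all such $u_n$ belong to $\mathcal{U}(y)^E\setminus\mathcal{V}(\mathcal{A}^E)$ (a set to which Proposition \ref{prop:PS} applies) and $d_{u_n}f\to 0$, the Palais-Smale condition yields a subsequence $u_{n_k}\to u^*$ strongly.

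The next step is to identify where $u^*$ lives. Because $\mathcal{V}(\mathcal{A}^E)$ is weakly open (hence strongly open), and $\mathcal{N}$ is open by hypothesis, the set $\mathcal{U}(y)^E\setminus(\mathcal{V}(\mathcal{A}^E)\cup\mathcal{N})$ is strongly closed in $\mathcal{U}(y)^E$, so $u^*$ lies in it. Strong continuity of $J$ forces $J(u^*)=s$, and strong continuity of $F$ gives $F(u^*)=y$. In particular, since $u^*\notin \mathcal{V}(\mathcal{A}^E)\supset \mathcal{A}^E$, the control $u^*$ is \emph{not} abnormal, so it is a regular point of the endpoint map $F$.

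Now I would pass to the limit in the derivative. By Proposition \ref{propo:contendp}, the map $u\mapsto d_u F$ is strong-strong continuous in the operator norm, and the differential of $J$ is given by $d_uJ(\cdot)=\langle u,\cdot\rangle$ (also strongly continuous). Since $u^*$ is a regular point of $F$, the level set $F^{-1}(y)$ is a smooth Hilbert submanifold in a neighborhood of $u^*$, and $f=J|_{\mathcal{U}(y)^E\setminus \mathcal{V}(\mathcal{A}^E)}$ agrees there with $J|_{F^{-1}(y)}$. Passing to the limit in $\|d_{u_{n_k}}f\|\le 1/n_k$ therefore yields $d_{u^*}f=0$. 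By Lagrange multipliers this means there exists $\lambda\in T^*_yM$ with $\lambda\, d_{u^*}F=d_{u^*}J$, i.e., $u^*\in\mathcal{C}$.

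Finally, combining $J(u^*)=s$, $u^*\in\mathcal{C}$, and $u^*\in \mathcal{U}(y)^E\setminus(\mathcal{V}(\mathcal{A}^E)\cup\mathcal{N})$ contradicts the standing hypothesis
\be
(\mathcal{U}(y)^E\setminus(\mathcal{V}(\mathcal{A}^E)\cup \mathcal{N}))\cap\{J=s\}\cap\mathcal{C}=\emptyset.
\ee
One then chooses $\eta(s)$ to be the produced constant, intersecting with $(0,1)$ if necessary to guarantee $\eta<1$. The only delicate point, and what I would flag as the main obstacle, is step four: making sure that the convergence $d_{u_{n_k}}f\to 0$ in the intrinsic sense on the varying submanifolds $F^{-1}(y)$ passes to the limit and produces an honest critical point relation $\lambda\, d_{u^*}F=d_{u^*}J$. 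This is handled precisely by the strong-strong continuity of $u\mapsto d_u F$ in Proposition \ref{propo:contendp}, which allows us to lift local orthogonal projections onto $\ker d_{u_{n_k}}F$ and transfer them to $\ker d_{u^*}F$ in the limit.
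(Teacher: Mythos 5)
Your proposal is correct and follows essentially the same route as the paper: argue by contradiction, produce a sequence in the strip with $\|d_{u_n}f\|\to 0$, apply the Palais--Smale condition of Proposition \ref{prop:PS} to extract a strong limit $\overline{u}$ in the closed set $\mathcal{U}(y)^E\setminus(\mathcal{V}(\mathcal{A}^E)\cup\mathcal{N})$, and conclude $J(\overline{u})=s$ with $\overline{u}$ a critical point of $f$, contradicting the hypothesis. The only difference is that you spell out the limit passage for the differentials via the strong continuity of $u\mapsto d_uF$, which the paper leaves implicit.
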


\begin{proof}
	We argue by contradiction and assume $\{u_n\}_{n\in\N}$ to be a sequence in $\mathcal{U}(y)^E\setminus(\mathcal{V}(\mathcal{A}^E)\cup \mathcal{N})$ such that
	\begin{itemize}
		\item [1.] $u_n\in\{s-1/n<J<s+1/n\}$
		\item [2.] $\|d_{u_n}f\|<1/n$.
	\end{itemize}
	Then the assumptions of Proposition \ref{prop:PS} are satisfied and therefore, passing possibly to a subsequence, we may assume that $\overline{u}=\lim_{n}u_n$ exists in $\mathcal{U}(y)^E\setminus(\mathcal{V}(\mathcal{A}^E)\cup \mathcal{N})$, as this set is closed. By point 1., $J(\overline{u})=\lim_n J(u_n)=s$; moreover, by point 2., $\overline{u}$ has to be a critical point of $f$, which leads to an absurd.
\end{proof}

A further application of Lemma \ref{lemma:wdist} gives $\beta>0$ such that
\be
\label{eq:distt}
\textrm{dist}\left(\mathcal{B}^E,\mathcal{U}(y)^E\setminus\mathcal{V}(\mathcal{B}^E)\right)>\beta;
\ee
in this situation we define the (strongly) open set
\be
	\label{eq:W}\mathcal{W}(\mathcal{B}^E)=\bigcup_{u\in\mathcal{B}^E}B\left(u,\frac{\beta}{2}\right)
\ee
as the union of open balls of radius $\beta/2$, centered on elements of $\mathcal{B}^E$.

The next proposition is a refinement of a classical result, which can for instance be found in \cite[Chapter 1, Theorem 3.4]{Chang}, adapted to our setting. Notice that this statement is stronger than the analogous statement as in \cite[Chapter 1, Theorem 3.3]{Chang}, which is in fact a consequence of \cite[Chapter 1, Theorem 3.4]{Chang} (the constants depend on the Palais-Smale condition).

\begin{prop}
	\label{prop:def}
		Let $0<s<E$, $0<\delta<\min\{\beta,1\}$ and $\mathcal{N}'\subset \mathcal{N}$ be two open sets in $\mathcal{U}(y)^E$ such that $\textrm{dist}(\overline{\mathcal{N}'},\mathcal{U}(y)^E\setminus\mathcal{N})>\delta$. Assume that there exists $0<\eta<1$ such that
		\be
		\|d_u f\|> \eta,\quad\,\forall u\in(\mathcal{U}(y)^E\setminus(\mathcal{V}(\mathcal{A}^E)\cup \mathcal{N}'))\cap\{s-\eta<J<s+\eta\},
		\ee
		and let
		\be
				t=\eta\frac{\beta}{2},\quad\,0<\varepsilon'<\eta\frac{\delta}{4},\quad\textrm{and }\,\varepsilon'<\varepsilon''<\eta.
		\ee
		Then there exist a continuous map \be\Theta:[0,t]\times \mathcal{W}(\mathcal{B}^E)\to\mathcal{U}(y)^E\setminus\mathcal{V}(\mathcal{A}^E),\ee such that
		\begin{itemize}\setlength\itemsep{0.3em}
			\item [a)] $\Theta(0,\cdot)=\textrm{Id}$,
			\item [b)] $\Theta(\tau,\cdot)=\textrm{Id}$ on the set $\{J\leq s-\varepsilon''\}\cup \{J\geq s+\varepsilon''\}$,
			\item [c)] For every $u\in(\mathcal{W}(\mathcal{B}^E)\setminus \mathcal{N})\cap\{J\leq s+\varepsilon'\}$, $\Theta(t,u)\in \mathcal{U}(y)^E\setminus\mathcal{V}(\mathcal{A}^E)\cap\{J\leq s-\varepsilon'\}$,
			\item [d)] $J(\Theta(\tau,u))$ is nonincreasing, for any $(\tau,u)\in[0,t]\times \mathcal{U}(y)^E\setminus\mathcal{V}(\mathcal{A}^E)$.
		\end{itemize}
\end{prop}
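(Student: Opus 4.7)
The plan is to adapt the classical pseudo-gradient deformation construction (as in \cite[Chapter 1, Theorem 3.4]{Chang}) to the present two-scale setting, where the flow must simultaneously stay away from $\mathcal{V}(\mathcal{A}^E)$ (outside of which $f$ is defined and Palais--Smale) and away from $\overline{\mathcal{N}'}$ (outside of which the estimate $\|d_u f\|>\eta$ holds), while dropping $J$ by at least $2\varepsilon'$ over the fixed time $t=\eta\beta/2$. The core idea is to run a rescaled normalised negative-gradient flow at a speed pinned down by balancing the two confinement requirements against the required descent; the hypothesis $\varepsilon'<\eta\delta/4$ is precisely what keeps this window non-empty.

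First I would fix a Lipschitz cutoff $\chi:\R\to[0,1]$ with $\chi\equiv 1$ on $[s-\varepsilon',s+\varepsilon']$ and supported in $[s-\varepsilon'',s+\varepsilon'']$, and a spatial Lipschitz cutoff $\psi:\mathcal{U}(y)^E\setminus\mathcal{V}(\mathcal{A}^E)\to[0,1]$ defined by $\psi(u)=\min\{1,\,2\,\textrm{dist}(u,\overline{\mathcal{N}'})/\delta\}$. The hypothesis $\textrm{dist}(\overline{\mathcal{N}'},\mathcal{U}(y)^E\setminus\mathcal{N})>\delta$ guarantees $\psi\equiv 1$ on $\mathcal{U}(y)^E\setminus\mathcal{N}$, while $\psi\equiv 0$ on $\overline{\mathcal{N}'}$. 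On the support of $\chi\psi$ one lies in $\{s-\eta<J<s+\eta\}\cap(\mathcal{U}(y)^E\setminus(\mathcal{V}(\mathcal{A}^E)\cup\mathcal{N}'))$ (using $\varepsilon''<\eta$ and $\psi>0$), so the hypothesis gives $\|\nabla f\|>\eta$ there. I would then define
\[
V(u)=-c\,\chi(J(u))\,\psi(u)\,\frac{\nabla f(u)}{\|\nabla f(u)\|^2},
\]
extended by zero outside the support of $\chi\psi$. This vector field is bounded, locally Lipschitz and tangent to the fibre $F^{-1}(y)$, so its flow $\Theta$ is well defined on $[0,t]\times(\mathcal{U}(y)^E\setminus\mathcal{V}(\mathcal{A}^E))$.

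The constant $c$ is then fixed by the following balance. From $\|V\|\leq c/\eta$ the total strong displacement of any orbit over $[0,t]$ is at most $c\beta/2$; asking this to be less than $\delta/2$ (so orbits starting outside $\mathcal{N}$ never reach $\overline{\mathcal{N}'}$) and less than $\beta/2$ (so orbits starting in $\mathcal{W}(\mathcal{B}^E)$ remain in $\mathcal{V}(\mathcal{B}^E)$, using \eqref{eq:distt}) yields the upper bound $c<\delta/\beta$. Meanwhile $\langle df,V\rangle=-c\chi\psi$, so wherever $\chi=\psi=1$ the descent rate equals $c$, producing a drop of at least $ct=c\eta\beta/2$ over $[0,t]$; asking this to exceed $2\varepsilon'$ gives the lower bound $c>4\varepsilon'/(\eta\beta)$. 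The interval $(4\varepsilon'/(\eta\beta),\,\delta/\beta)$ is non-empty exactly when $\varepsilon'<\eta\delta/4$, which is how the hypothesis enters; any $c$ in this interval will do.

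Properties (a), (b) and (d) are then immediate from $\Theta(0,\cdot)=\textrm{Id}$, from $V$ vanishing outside $\{s-\varepsilon''\leq J\leq s+\varepsilon''\}$, and from $\frac{d}{d\tau}J(\Theta(\tau,u))=-c\chi\psi\leq 0$. For the crucial (c), take $u\in(\mathcal{W}(\mathcal{B}^E)\setminus\mathcal{N})\cap\{J\leq s+\varepsilon'\}$: the two displacement bounds force the orbit $\Theta(\cdot,u)$ to stay at distance $>\delta/2$ from $\overline{\mathcal{N}'}$ (hence $\psi\equiv 1$ along it) and inside $\mathcal{V}(\mathcal{B}^E)\subset\mathcal{U}(y)^E\setminus\mathcal{V}(\mathcal{A}^E)$ throughout $[0,t]$; as long as $J(\Theta(\tau,u))\geq s-\varepsilon'$ both cutoffs equal $1$ and $J$ decreases at rate exactly $c$, and since $ct>2\varepsilon'$ the orbit must cross the level $s-\varepsilon'$ before time $t$, after which monotonicity (d) gives $J(\Theta(t,u))\leq s-\varepsilon'$. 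The main obstacle I anticipate is precisely this quantitative simultaneous calibration: the admissible window for $c$ closes exactly as $\varepsilon'$ approaches $\eta\delta/4$, and this is where the earlier bookkeeping separating the weak neighbourhoods $\mathcal{V}(\mathcal{A}^E)$ and $\mathcal{V}(\mathcal{B}^E)$ (which produced the constant $\beta$) is essential.
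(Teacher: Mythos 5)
Your construction is correct and is essentially the paper's own argument: a normalized negative-gradient field truncated by an energy cutoff $\chi$ and a spatial Urysohn-type cutoff vanishing on $\overline{\mathcal{N}'}$, with the bound $\|Y\|<\mathrm{const}/\eta$ giving displacement at most $\beta/2$ (confinement in $\mathcal{V}(\mathcal{B}^E)$) and at most $\delta/2$ (so the spatial cutoff stays $\equiv 1$ along orbits starting outside $\mathcal{N}$), and the calibration $\varepsilon'<\eta\delta/4$ guaranteeing the drop below $s-\varepsilon'$. The only cosmetic difference is that you rescale the field by a constant $c\in(4\varepsilon'/(\eta\beta),\delta/\beta)$ and flow for the full time $t$, whereas the paper flows the unit-coefficient field for the shorter time $T_2=\eta\delta/2\leq t$ — a trivial reparametrization.
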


\begin{proof}
	Define
	\be
		\chi(r)=\left\{\begin{aligned}
						0 & \quad \textrm{if } r\not\in(s-\varepsilon'',s+\varepsilon'')\\
						1 & \quad \textrm{if } r \in [s-\varepsilon',s+\varepsilon']
					\end{aligned}
			 \right.
	\ee
	to be a smooth function satisfying $0\leq \chi(r)\leq 1$.  Consider two closed subsets $C_1=\mathcal{U}(y)^E\setminus(\mathcal{V}(\mathcal{A}^E)\cup \mathcal{N}'_{\delta/2})$, where $\mathcal{N}'_{p}=\{u\in \mathcal{U}(y)^E\,|\,\textrm{dist}(u,\overline{\mathcal{N}'})<p\}$, and $C_2=\overline{\mathcal{N}'}\cap(\mathcal{U}(y)^E\setminus \mathcal{V}(\mathcal{A}^E))$. Then we can construct the function
	\be
		g(u)=\frac{\textrm{dist}(u,C_2)}{\textrm{dist}(u,C_1)+\textrm{dist}(u,C_2)},
	\ee
	so that $0\leq g(u)\leq 1$, $g\equiv 1$ in $C_1$ and $g\equiv 0$ in $C_2$. On the set $\mathcal{U}(y)^E\setminus\mathcal{V}(\mathcal{A}^E)$, we define the vector field
	\be
		\label{eq:vectorfield}
			Y(u)=-g(u)\chi(J(u))\frac{d_uf}{\|d_uf\|^2}.
	\ee
	By the standard theory of differential equations on the real line, since $\|Y\|< \frac{1}{\eta}$, it is well-defined the time-$x$ flow of $Y$ starting from the point $u_0$ for any time $x>0$, which we will indicate by $\psi_x^Y(u_0)$. Pick $u\in\mathcal{W}(\mathcal{B}^E)\setminus \mathcal{N}$, and assume that it belongs to a ball centered at $u_0$. If we let $\psi^Y_\cdot(u)$ flow for a time $T_1\leq t$, by virtue of \eqref{eq:distt} and the inequality
	\be
	\|\psi_{T_1}^Y(u)-u\|\leq \int_0^{T_1}\|Y(\psi_\tau^Y(u))\|d\tau<\frac{\beta}{2},
	\ee
	we see that:
	\be
		\|\psi_{T_1}^Y(u)-u_0\|\leq \|u-u_0\|+\|\psi_{T_1}^Y(u)-u\|<\frac{\beta}{2}+\frac{\beta}{2}=\beta,
	\ee
	that is $\psi_{T_1}^Y(u)$ belongs to $\mathcal{V}(\mathcal{B}^E)\subset\mathcal{U}(y)^E\setminus\mathcal{V}(\mathcal{A}^E)$. Then we define the deformation map $\Theta$ by \be
		\Theta(\tau,u)=\psi_\tau^Y(u),\quad\forall\,(\tau,u)\in[0,t]\times\mathcal{W}(\mathcal{B}^E).
	\ee
	
	Points a), b) and d) are almost immediate: the only non trivial point to verify is c). Let $T_2=\frac{\delta}{2}\eta<\frac{\beta}{2}\eta=t$; we claim that flowing for time $T_2$ suffices for our purposes, that is we want to show that
	\be
	J(\psi_{T_2}^Y(u))\leq s-\varepsilon',\quad \forall\,u\in (\mathcal{W}(\mathcal{B}^E)\setminus \mathcal{N})\cap \{s-\varepsilon'<J\leq s+\varepsilon'\}.
	\ee
	We observe at first that
	\be
		\label{eq:one}
		\frac{d}{dt}J(\psi_\tau^Y(u))=-g(\psi_\tau^Y(u))\chi(J(\psi_\tau^Y(u))),
	\ee
	To prove our claim we argue by contradiction, and we assume
	\be\label{eq:absurd}
		s-\varepsilon'<J(\psi_{\tau}^Y(u))\leq s+\varepsilon',\quad\forall\,\tau\in\left[0,\frac{\delta}{2}\eta\right],
	\ee
	so that $\chi(J(\psi_{\tau}^Y(u)))\equiv 1$. Moreover, since $\|\psi_\tau^Y(u)-u\|<\frac{\tau}{\eta}$, we also have
	\begin{align}
		\textrm{dist}(\psi_\tau^Y(u),\mathcal{N}'_{\delta/2})&> \textrm{dist}(u,\mathcal{N}'_{\delta/2})-\frac{\tau}{\eta}\\
		&>\delta-\frac{1}{2}\delta-\frac{1}{\eta}\frac{\eta\delta}{2}=0,
	\end{align}
	so that we even have the equality $g(\psi_{\tau}^Y(u))\equiv 1$. Finally, it follows from \eqref{eq:one} that
	\be
		\frac{d}{dt}J(\psi_\tau^Y(u))=-1,
	\ee
	which implies, combined with our choice of $\varepsilon'<\frac{\delta}{4}\eta$, that the following line is true
	\be
		J(\psi_{T_2}^Y(u))= J(u)-\frac{\delta}{2}\eta\leq s+\varepsilon'-\frac{\delta}{2}\eta<s-\varepsilon'.
	\ee
	Since this contradicts \eqref{eq:absurd}, the proof is complete.
\end{proof}

\begin{cor}
	\label{cor:normgeod}
	Let $\mathcal{N}$ be a neighborhood of
	\be
		\mathcal{C}_s=(\mathcal{U}^E\setminus\mathcal{V}(\mathcal{A}^E))\cap\{J=s\}\cap\mathcal{C}.
	\ee
	Then there exist $0<\eta<1$ and a deformation map $\Theta$ satisfying the conclusions of Proposition \ref{prop:def}.
\end{cor}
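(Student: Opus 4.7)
The plan is to reduce Corollary \ref{cor:normgeod} to a direct application of Proposition \ref{prop:def}. To invoke that proposition I must produce an intermediate open set $\mathcal{N}'\subset\mathcal{N}$ containing $\mathcal{C}_s$ whose closure is separated, in the strong topology, from $\mathcal{U}(y)^E\setminus\mathcal{N}$ by some $\delta<\min\{\beta,1\}$, together with a lower bound $\eta\in(0,1)$ on $\|d_uf\|$ near $\{J=s\}$ off $\mathcal{V}(\mathcal{A}^E)\cup\mathcal{N}'$. Once $\mathcal{N}'$ is in place, the gradient bound comes for free from the Palais--Smale Lemma (Corollary \ref{cor:PSL}), since the inclusion $\mathcal{C}_s\subset\mathcal{N}'$ will make the offending intersection $(\mathcal{U}(y)^E\setminus(\mathcal{V}(\mathcal{A}^E)\cup\mathcal{N}'))\cap\{J=s\}\cap\mathcal{C}$ empty.

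The substantive step, and the main obstacle, is showing that $\mathcal{C}_s$ is \emph{strongly} compact. I plan to extract this directly from Palais--Smale: any sequence $\{u_n\}\subset\mathcal{C}_s$ satisfies $d_{u_n}f=0$ and $J(u_n)=s$, so Proposition \ref{prop:PS} provides a strongly convergent subsequence with limit $\bar u\in\mathcal{U}(y)^E\setminus\mathcal{V}(\mathcal{A}^E)$. Since $\mathcal{V}(\mathcal{A}^E)\supset\mathcal{A}^E$, the limit $\bar u$ avoids the abnormal set and is therefore a regular point of $F$; combined with the strong continuity of the differential (Proposition \ref{propo:contendp}) and of $J$, this gives $d_{\bar u}f=0$ and $J(\bar u)=s$, so $\bar u\in\mathcal{C}_s$. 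This is where the genericity/softness hypothesis enters the argument, through the validity of Palais--Smale for $f$.

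Once strong compactness is established, the construction of $\mathcal{N}'$ is routine. The function $u\mapsto\textrm{dist}(u,\mathcal{U}(y)^E\setminus\mathcal{N})$ is strongly continuous and strictly positive on $\mathcal{C}_s$, and therefore attains a positive minimum $m>0$ on the compact set $\mathcal{C}_s$. Choosing $\delta$ with $0<\delta<\min\{\beta,1,m/3\}$, I would take
\be
	\mathcal{N}'=\bigl\{u\in\mathcal{U}(y)^E:\textrm{dist}(u,\mathcal{U}(y)^E\setminus\mathcal{N})>2\delta\bigr\}.
\ee
This is strongly open, contains $\mathcal{C}_s$ (since $m\geq 3\delta$), and $\overline{\mathcal{N}'}$ is separated from $\mathcal{U}(y)^E\setminus\mathcal{N}$ by at least $2\delta>\delta$.

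To conclude, the inclusion $\mathcal{C}_s\subset\mathcal{N}'$ makes Corollary \ref{cor:PSL} (applied with $\mathcal{N}'$ in place of $\mathcal{N}$) produce $\eta\in(0,1)$ with the required gradient estimate. Feeding the quadruple $(\mathcal{N}',\mathcal{N},\delta,\eta)$ into Proposition \ref{prop:def}, together with any admissible choice $\varepsilon'<\eta\delta/4$ and $\varepsilon'<\varepsilon''<\eta$, delivers the deformation $\Theta$ with the stated properties. Apart from the compactness of $\mathcal{C}_s$, every other step is a direct bookkeeping of the hypotheses of the previous results.
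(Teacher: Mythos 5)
Your proposal is correct and follows essentially the same route as the paper: compactness of $\mathcal{C}_s$ via the Palais--Smale condition, an intermediate open set $\mathcal{N}'$ sandwiched between $\mathcal{C}_s$ and $\mathcal{N}$ with the required metric separation, then Corollary \ref{cor:PSL} to get $\eta$ and Proposition \ref{prop:def} to get $\Theta$. The only (cosmetic) difference is that you define $\mathcal{N}'$ as a superlevel set of the distance to $\mathcal{U}(y)^E\setminus\mathcal{N}$, while the paper takes a small metric neighborhood $\mathcal{C}_s(\overline{\nu})$ of $\mathcal{C}_s$; both choices serve the same purpose.
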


\begin{proof}
	The Palais Smale condition implies that the set $\mathcal{C}_s$ is (sequentially) compact; therefore for sufficiently small values of the parameter $\nu>0$ the closure of the set
	\be
		\mathcal{C}_s(\nu)=\{u\in \mathcal{U}^E\,|\,\textrm{dist}(u,\mathcal{C}_s)<\nu\}
	\ee
	is contained in $\mathcal{N}$. Let $\overline{\nu}$ be such that the inclusion holds, and define $\mathcal{N}'=\mathcal{C}_s(\overline{\nu})$. Then by construction
	\be
		\mathcal{U}(y)^E\setminus(\mathcal{V}(\mathcal{A}^E)\cup \mathcal{N}')\cap\{J=s\}\cap\mathcal{C}=\emptyset;
	\ee
	therefore, by Corollary \ref{cor:PSL}, there exists $0<\eta<1$ satisfying the assumptions of Proposition \ref{prop:def}, and then we conclude.
\end{proof}

Using somewhat the same ideas as in Proposition \ref{prop:def} we also have the following proposition.

\begin{prop}
	\label{prop:defuniform}
	Let $0<E_1<E_2<E$ and $\epsilon>0$. Assume that there exists $0<\eta<\min\{\epsilon/2,1\}$ such that
	\be
	\|d_u f\|> \eta,\quad\,\forall u\in(\mathcal{U}(y)^E\setminus\mathcal{V}(\mathcal{A}^E))\cap\{E_1+\epsilon-\eta<J<E_2+\eta\},
	\ee
	and let
	\be
	t=\eta\frac{\beta}{2}, \quad\textrm{and }\,0<\varepsilon'<\varepsilon''<\eta.
	\ee
	Then there exist a continuous map
	\be
		\Theta:[0,t]\times \mathcal{W}(\mathcal{B}^E)\to\mathcal{U}(y)^E\setminus\mathcal{V}(\mathcal{A}^E),
	\ee
	such that
	\begin{itemize}\setlength\itemsep{0.3em}
		\item [a)] $\Theta(0,\cdot)=\textrm{Id}$,
		\item [b)] $\Theta(\tau,\cdot)=\textrm{Id}$ on the set $\{J\leq E_1+\epsilon-\varepsilon''\}\cup \{J\geq E_2+\varepsilon''\}$,
		\item [c)] For any $u\in\mathcal{W}(\mathcal{B}^E)\cap\{E_1+\epsilon\leq J\leq E_2\}$ and $0\leq \tau<\varepsilon'$, $\Theta(\tau,u)\in\mathcal{U}(y)^E\setminus\mathcal{V}(\mathcal{A}^E)$ and $J(\Theta(\tau,u))=J(u)-\tau$.
	\end{itemize}
\end{prop}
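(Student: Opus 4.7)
The plan is to follow the template of Proposition~\ref{prop:def}, adapting it in two places. First, since the hypothesis gives $\|d_uf\|>\eta$ \emph{everywhere} on the relevant strip intersected with $\mathcal U(y)^E\setminus\mathcal V(\mathcal A^E)$, there is no neighborhood of normal geodesics to cut off and the spatial cutoff $g$ can simply be dropped. Second, the energy cutoff $\chi$ should be engineered so that the energy drops at unit rate on the whole plateau $[E_1+\epsilon-\varepsilon',E_2+\varepsilon']$, which is precisely what conclusion c) asks for.

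Concretely, I would choose a smooth function $\chi:\R\to[0,1]$ with $\chi\equiv 1$ on $[E_1+\epsilon-\varepsilon',E_2+\varepsilon']$ and $\chi\equiv 0$ outside $(E_1+\epsilon-\varepsilon'',E_2+\varepsilon'')$, and set
\be
Y(u)=-\chi(J(u))\,\frac{d_uf}{\|d_uf\|^2},
\ee
extended by zero where $\chi(J(u))=0$. Since $\varepsilon''<\eta$, the hypothesis yields $\|d_uf\|>\eta$ at every point of $\mathcal U(y)^E\setminus\mathcal V(\mathcal A^E)$ where $\chi(J(u))\neq 0$, so $Y$ is well-defined, locally Lipschitz, and satisfies $\|Y\|<1/\eta$. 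For $u\in\mathcal W(\mathcal B^E)$ and $\tau\in[0,t]$ with $t=\eta\beta/2$, the estimate $\|\psi_\tau^Y(u)-u\|\leq\tau/\eta\leq\beta/2$ combined with $u\in B(\mathcal B^E,\beta/2)$ and \eqref{eq:distt} shows $\psi_\tau^Y(u)\in\mathcal V(\mathcal B^E)\subset\mathcal U(y)^E\setminus\mathcal V(\mathcal A^E)$, so the flow never leaves the regular region; I then set $\Theta(\tau,u)=\psi_\tau^Y(u)$.

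Property a) is immediate, and b) follows because $\chi(J(u))=0$ (hence $Y(u)=0$) whenever $J(u)\notin(E_1+\epsilon-\varepsilon'',E_2+\varepsilon'')$, so $\Theta(\tau,\cdot)$ is the identity there. The main (but still mild) technical point is c). Along an orbit one computes
\be
\frac{d}{d\tau}J(\psi_\tau^Y(u))=\bigl\langle d_{\psi_\tau^Y(u)}f,\,Y(\psi_\tau^Y(u))\bigr\rangle=-\chi\bigl(J(\psi_\tau^Y(u))\bigr),
\ee
so the linear drop $J(\Theta(\tau,u))=J(u)-\tau$ will follow provided $\chi\equiv 1$ along the orbit. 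For $u$ with $J(u)\in[E_1+\epsilon,E_2]$ a short bootstrap using $\varepsilon'<\varepsilon''$ does this: as long as the orbit remains in $[E_1+\epsilon-\varepsilon',E_2+\varepsilon']$ one has $\chi=1$ and therefore $J(\psi_\tau^Y(u))=J(u)-\tau\ge E_1+\epsilon-\tau>E_1+\epsilon-\varepsilon'$, which prevents escape from the plateau for all $\tau<\varepsilon'$ and closes the argument.
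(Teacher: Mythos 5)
Your proposal is correct and follows essentially the same route as the paper's own (sketched) proof: drop the spatial cutoff $g$, keep the energy cutoff $\chi$ with plateau $[E_1+\epsilon-\varepsilon',E_2+\varepsilon']$, use $\|Y\|<1/\eta$ and the $\beta/2$ estimate from \eqref{eq:distt} to keep the flow in $\mathcal{V}(\mathcal{B}^E)\subset\mathcal{U}(y)^E\setminus\mathcal{V}(\mathcal{A}^E)$, and verify c) by the bootstrap showing $\chi\equiv 1$ along the orbit for $\tau<\varepsilon'$. No substantive differences; your write-up just makes the well-definedness of the flow and the unit-rate energy drop slightly more explicit than the paper's sketch.
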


\begin{proof}[Sketch of proof]
	In this case (the notations are mutuated from Proposition \ref{prop:def}), we have
	\be
	\chi(r)=\left\{\begin{aligned}
		0 & \quad \textrm{if } r\not\in(E_1+\epsilon-\varepsilon'',E_2+\varepsilon'')\\
		1 & \quad \textrm{if } r \in [E_1+\epsilon-\varepsilon',E_2+\varepsilon']
	\end{aligned}
	\right.
	\ee
	and
	\be
		\label{eq:vectorfield1}
		Y(u)=-\chi(J(u))\frac{d_uf}{\|d_uf\|^2}.
	\ee
	The flow up to time $t$ is well-defined, since for any point $u$ in $\mathcal{W}(\mathcal{B}^E)$, $\psi_\tau^Y(u)$ stays within $\mathcal{U}(y)^E\setminus\mathcal{V}(\mathcal{A}^E)$, and we define the deformation map $\Theta$ by \be
	\Theta(\tau,u)=\psi_\tau^Y(u),\quad\forall\,(\tau,u)\in[0,t]\times\mathcal{W}(\mathcal{B}^E).
	\ee
	Only point c) needs a verification: but, since for any $u\in\mathcal{W}(\mathcal{B}^E)\cap\{E_1+\epsilon\leq J\leq E_2\}$ and $0\leq\tau<\varepsilon'$
	\be
		J(\psi_\tau^Y(u))\geq J(u)-\varepsilon'\geq E_1+\epsilon-\varepsilon',
	\ee
	then $\chi(J(\psi_\tau^Y(u)))\equiv 1$, and the claim follows.
\end{proof}

\begin{cor}
	\label{cor:cordefuniform}
	Let $0<E_1<E_2<E$ be such that
	\be
		(\mathcal{U}(y)^E\setminus\mathcal{V}(\mathcal{A}^E))\cap\{E_1<J\leq E_2\}\cap\mathcal{C}=\emptyset.
	\ee
	Then, for any $\epsilon>0$, the conclusions of Proposition \ref{prop:defuniform} hold on the strip $\{E_1+\epsilon\leq J\leq E_2\}$.
\end{cor}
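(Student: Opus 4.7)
The plan is to verify the hypothesis of Proposition \ref{prop:defuniform} by a uniform Palais--Smale argument, analogous to Corollary \ref{cor:PSL} but performed on a strip of levels rather than at a single level.

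Fix $\epsilon>0$. The only thing to check is that there exists $\eta\in (0,\min\{\epsilon/2,1\})$ such that
\be
\|d_uf\|>\eta \quad \forall\, u\in (\mathcal{U}(y)^E\setminus\mathcal{V}(\mathcal{A}^E))\cap\{E_1+\epsilon-\eta<J<E_2+\eta\};
\ee
once this is established the conclusion is immediate from Proposition \ref{prop:defuniform}.

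Suppose by contradiction that no such $\eta$ exists. Then for every integer $n>\max\{2/\epsilon,1\}$ there is a point $u_n\in \mathcal{U}(y)^E\setminus\mathcal{V}(\mathcal{A}^E)$ with
\be
E_1+\epsilon-\tfrac{1}{n}<J(u_n)<E_2+\tfrac{1}{n}\quad\text{and}\quad \|d_{u_n}f\|\leq \tfrac{1}{n}.
\ee
The sequence $\{u_n\}$ lies in the strongly closed subset $\mathcal{U}(y)^E\setminus \mathcal{V}(\mathcal{A}^E)$ of the weakly compact set $\mathcal{U}(y)^E$; by the Palais--Smale condition (Proposition \ref{prop:PS}) a subsequence converges strongly to some $\overline u\in \mathcal{U}(y)^E\setminus\mathcal{V}(\mathcal{A}^E)$. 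By continuity of $J$ and of $u\mapsto d_u f$ on this set,
\be
E_1+\tfrac{\epsilon}{2}\leq J(\overline u)\leq E_2\quad\text{and}\quad d_{\overline u}f=0.
\ee
In particular $\overline u$ is a regular point of $F$ (it lies outside $\mathcal{V}(\mathcal{A}^E)$, hence outside $\mathcal A^E$) which is also a critical point of $J$ restricted to the fiber $F^{-1}(y)$. By the very definition of $\mathcal{C}$ recalled in \eqref{eq:decomp}, this forces $\overline u\in \mathcal{C}$. Thus
\be
\overline u\in (\mathcal{U}(y)^E\setminus\mathcal{V}(\mathcal{A}^E))\cap\{E_1<J\leq E_2\}\cap\mathcal{C},
\ee
contradicting the standing assumption that this intersection is empty.

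Hence some $\eta\in(0,\min\{\epsilon/2,1\})$ with the required property exists, and Proposition \ref{prop:defuniform} directly yields the deformation $\Theta$ with the desired properties on the strip $\{E_1+\epsilon\leq J\leq E_2\}$. The only subtlety I expect is the one already exploited above: one must make sure that the limit point $\overline u$ really stays in the closed set $\mathcal{U}(y)^E\setminus \mathcal{V}(\mathcal{A}^E)$ (so that $f$ is defined and $d_{\overline u}f=0$ makes sense), and this is granted by choosing $\mathcal{V}(\mathcal{A}^E)$ weakly open and using that strong convergence implies weak convergence.
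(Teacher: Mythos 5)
Your argument is correct, but it follows a different route from the paper. The paper's proof treats the strip level by level: for each $s\in[E_1+\epsilon,E_2]$ it invokes Corollary \ref{cor:PSL} to get a local parameter $\eta(s)$, covers the compact interval $[E_1+\epsilon,E_2]$ by the intervals $(s-\eta(s),s+\eta(s))$, extracts a finite subcover and sets $\eta=\min\{\eta_1,\dotso,\eta_p,\epsilon/2\}$. You instead prove the uniform gradient bound on the whole strip in one stroke, by negating it, producing a sequence $u_n$ in $(\mathcal{U}(y)^E\setminus\mathcal{V}(\mathcal{A}^E))\cap\{E_1+\epsilon-\tfrac1n<J<E_2+\tfrac1n\}$ with $\|d_{u_n}f\|\le\tfrac1n$, and applying Proposition \ref{prop:PS} directly; the strong limit $\overline u$ stays in the (weakly, hence strongly) closed set, is a critical point of $f$ at a level in $(E_1,E_2]$, hence lies in $\mathcal{C}$ by the decomposition \eqref{eq:decomp}, contradicting the hypothesis. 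In effect you re-run the proof of Corollary \ref{cor:PSL} on a strip rather than at a single level, so your argument is self-contained modulo Proposition \ref{prop:PS} and avoids the covering step; it also automatically handles the slightly enlarged strip $\{E_1+\epsilon-\eta<J<E_2+\eta\}$, a point the paper's finite-subcover argument treats only implicitly (the intervals $I(s_i)$ are only guaranteed to cover $[E_1+\epsilon,E_2]$ itself). The paper's version, on the other hand, reuses an already-established lemma as a black box and is shorter on the page. The one step you leave implicit — that a critical point of $f$ outside $\mathcal{V}(\mathcal{A}^E)$, being a regular point of $F$, must be strictly normal and hence in $\mathcal{C}$ — is the same implicit step the paper makes in Corollary \ref{cor:PSL}, so no gap is introduced.
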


\begin{proof}
	Let $\epsilon>0$ be fixed. Then, for any $E_1+\epsilon\leq s\leq E_2$, Corollary \ref{cor:PSL} applies, and permits to find the corresponding parameter $0<\eta(s)<1$. Since
	\be
		[E_1+\epsilon,E_2]\subset \bigcup_{s\in [E_1+\epsilon,E_2]}\underbrace{(s-\eta(s),s+\eta(s))}_{I(s)},
	\ee
	by compactness we may extract a finite subcover consisting of the open intervals $I(s_1),\dotso,I(s_p)$. Now it is sufficient to choose $\eta=\min\{\eta_1,\dotso,\eta_p,\epsilon/2\}$ (notice in particular that $0<\eta<\min\{1,\epsilon/2\}$) to see that the assumption of Proposition \ref{prop:defuniform} are satisfied.
\end{proof}

	\section{The Serre fibration property}\label{sec:serre}
  We will need the following preliminary lemma, asserting that the property of being a Serre fibration can be verified locally. Notice that if the open cover  in the statement of Lemma \ref{lemma:localserre}  did not depend on $n$, then this is classical (see \cite{Hurewicz} for a proof in the more general case of a Hurewicz fibration); instead here we have to work with an open cover that might depend on $n$, but in the case of a \emph{Serre} fibration this is not an obstacle and the proof remains essentially unchanged.
    \begin{lemma}\label{lemma:localserre}Let $p:U\to Y$ be a continuous function between topological spaces such that for every $n\in \mathbb{N}$ there exists an open cover $\mathfrak{U}_n=\{Y_\alpha\}_{\alpha \in A}$ of $Y$ with the property that for every $\alpha\in A$ the map
    \be p|_{p^{-1}(Y_\alpha)}:p^{-1}(Y_\alpha)\to Y_\alpha\ee
    has the homotopy lifting property with respect to all $n$-dimensional CW-complexes. Then $p$ is a Serre fibration.
    \end{lemma}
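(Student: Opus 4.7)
The plan is to establish that $p$ has the homotopy lifting property (HLP) with respect to every disk $D^n$, $n\ge 0$, which is equivalent to being a Serre fibration. Fix $n$ and a lifting problem: a continuous map $f:D^n\to U$ together with a homotopy $H:D^n\times I\to Y$ satisfying $H(\cdot,0)=p\circ f$. The goal is to construct $\tilde H:D^n\times I\to U$ with $p\circ\tilde H=H$ and $\tilde H(\cdot,0)=f$.

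The first step is to exploit compactness. Apply the hypothesis with the cover $\mathfrak{U}_n=\{Y_\alpha\}$, so that each restriction $p|_{p^{-1}(Y_\alpha)}$ has the HLP with respect to all $n$-dimensional CW-complexes. The family $\{H^{-1}(Y_\alpha)\}$ is an open cover of the compact space $D^n\times I$, and by the Lebesgue number lemma one can choose fine enough product subdivisions $D^n=\bigcup_j\sigma_j$ (into cubes, say from a simplicial refinement of $D^n$) and $0=t_0<t_1<\cdots<t_m=1$ such that every cube $K_{i,j}:=\sigma_j\times[t_i,t_{i+1}]$ is contained in some $H^{-1}(Y_{\alpha(i,j)})$.

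The second step is a double induction producing $\tilde H$ cube by cube. Inductively on $i$, assume $\tilde H$ has been constructed on $D^n\times[0,t_i]$; within the slab $D^n\times[t_i,t_{i+1}]$, order the cubes $\sigma_j$ so that at stage $j$ the lift is already defined on the closed subcomplex
\[
A_{i,j}=\bigl(\sigma_j\times\{t_i\}\bigr)\cup\Bigl(\bigl(\sigma_j\cap\textstyle\bigcup_{j'<j}\sigma_{j'}\bigr)\times[t_i,t_{i+1}]\Bigr),
\]
which is a union of faces of $K_{i,j}$ containing its bottom face $\sigma_j\times\{t_i\}$. The inductive task is then to extend this lift across the whole cube $K_{i,j}$, mapping into $p^{-1}(Y_{\alpha(i,j)})$.

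The key technical ingredient is the classical observation that any closed union of faces of a cube that contains one distinguished face is a retract homeomorphic (as a pair) to that face; concretely there is a homeomorphism of pairs $(K_{i,j},A_{i,j})\cong(\sigma_j\times[0,1],\sigma_j\times\{0\})$. Under this homeomorphism, the extension problem over $K_{i,j}$ is transformed into the standard HLP problem: lift the homotopy $\sigma_j\times[0,1]\to Y_{\alpha(i,j)}$ given a prescribed lift on $\sigma_j\times\{0\}$. Since $\sigma_j$ is an $n$-dimensional CW-complex, the assumption on $p|_{p^{-1}(Y_{\alpha(i,j)})}$ supplies such a lift, and we transport it back to $K_{i,j}$.

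The resulting $\tilde H$ is continuous because the finite collection of cubes is a closed cover on which the constructed lifts agree on overlaps (common faces) by construction; the standard pasting lemma for finite closed covers finishes the argument. The main obstacle is the homeomorphism-of-pairs step identifying $(K_{i,j},A_{i,j})$ with a standard cube-with-bottom pair; this is the combinatorial heart of the local-to-global argument, but it is entirely standard (the same mechanism that underlies the classical proof of Hurewicz's local-to-global theorem for fibrations, and which, as the excerpt notes, still works here despite the cover depending on $n$, because the HLP is only tested against CW-complexes of a fixed finite dimension at a time).
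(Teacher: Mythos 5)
Your proposal is correct and follows essentially the same route as the paper: subdivide $D^n\times I$ finely enough (Lebesgue number) with respect to the cover $\mathfrak{U}_n$ and lift inductively cube by cube, solving on each small cube a relative lifting problem into some $p^{-1}(Y_\alpha)$. The only cosmetic difference is that you make explicit the homeomorphism of pairs $(K_{i,j},A_{i,j})\cong(\sigma_j\times[0,1],\sigma_j\times\{0\})$, whereas the paper invokes the equivalent statement that the homotopy lifting property with respect to $n$-dimensional CW-complexes is the same as with respect to $n$-dimensional CW-pairs (citing Hatcher), which rests on exactly that homeomorphism.
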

    \begin{proof}

    Recall that in order for $p$ to be a Serre fibration, it is enough to check that it has the homotopy lifting property with respect to all cubes.
    Let $\tilde H:I^n\times I\to Y$ be a homotopy, $\tilde h_t=\tilde H(\cdot, t)$ and $h_0:I^n\to U$ be a lift; consider the open cover $\mathfrak{U}_n$ and a subdivision of $I^n$ into small cubes $\{C_\beta\}_{\beta \in B}$ and of $I$ into small intervals $\{I_j\}_{j=1, \ldots, N}$ with the property that for every $(\beta, j)\in B\times \{1, \ldots, N\}$ there exists $\alpha \in A$ such that:
    \be\label{eq:cont} \tilde H(C_\beta \times I_j)\subset Y_\alpha.\ee
    We can assume by induction that $h_t$ has been constructed over $\partial C_\beta$ for every $C_\beta$. To extend $h_t$ over the all cube $C_\beta$ we use \eqref{eq:cont} and notice that it means that we are given a homotopy $\tilde h_t|_{C_\beta}:C_\beta\to Y_\alpha$ with a lift:
    \be\label{eq:constr} h_t|_{\partial C_\beta}:\partial C_\beta\to p^{-1}(Y_\alpha).\ee
    By assumption $p|_{p^{-1}(Y_\alpha)}$ has the homotopy lifting property with respect to all $n$-dimensional CW-complexes. This is equivalent to the fact that $p|_{p^{-1}(Y_\alpha)}$ has the homotopy lifting property with respect to all $n$-dimensional CW-pairs (see \cite[Section 4.2]{Hatcher}); that means exactly that we can lift the homotopy $\tilde h_t|_{C_\beta}:C_\beta\to Y_\alpha$ with the constraint \eqref{eq:constr}.
    \end{proof}

\begin{thm}[Serre fibration property]\label{thm:serre}Assume all singular curves with $J\leq E_2$ are soft. If there are no normal geodesics in $\mathcal{U}(y)$ with Energy $E_1< J\leq E_2$, then for every $\epsilon>0$ sufficiently small the restriction of the Energy to $\mathcal{U}(y)_{E_1+\epsilon}^{E_2}$ is a Serre fibration.
    \end{thm}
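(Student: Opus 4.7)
The strategy is to verify the Serre fibration property locally via Lemma~\ref{lemma:localserre}, by combining two complementary ``energy-shift'' mechanisms inside $\mathcal{U}(y)$: the cross-section of Theorem~\ref{thm:implicit} (valid near abnormal controls) and the normalised gradient flow of Corollary~\ref{cor:cordefuniform} (valid in the regular region, thanks to the no-normal-geodesic assumption). The parameter $\epsilon$ will be taken smaller than the universal radius $r_1$ from Theorem~\ref{thm:implicit}, so that the cross-section is free to adjust the energy throughout the strip $\{E_1+\epsilon\le J\le E_2\}$. Fix $n\in\N$; by Lemma~\ref{lemma:localserre} it is enough to exhibit an open cover $\mathfrak{U}_n$ of $[E_1+\epsilon,E_2]$ such that over each $I\in\mathfrak{U}_n$ the restriction of $J$ has the homotopy lifting property with respect to $n$-dimensional CW-complexes.

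The first tool is an energy-shift near abnormals. For each $u_0\in\mathcal{A}^{E_2}$ set
\[
    S_{u_0}(u,r)\;=\;u+\sigma_{u_0}\bigl((0,r-J(u)),\,u\bigr),\qquad u\in B(u_0,r_2)\cap\mathcal{U}(y),\ |r-J(u)|<r_1.
\]
Since $\varphi(u+\sigma_{u_0}(w,u))=\varphi(u)+w$ and the chosen $w=(0,\ldots,0,r-J(u))$ has its first $m$ components zero, $S_{u_0}$ preserves $F$ (so $S_{u_0}(u,r)\in\mathcal{U}(y)$), satisfies $J(S_{u_0}(u,r))=r$, and $S_{u_0}(u,J(u))=u$. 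The equicontinuity of the family $\{\sigma_{u_0}\}_{u_0\in\mathcal{W}(\mathcal{A}^{E_2})}$ guarantees joint continuity with uniform moduli. The second tool is a gradient-flow shift on the regular part $\mathcal{W}(\mathcal{B}^{E_2})$. Corollary~\ref{cor:PSL}, together with the no-normal-geodesic hypothesis, gives a uniform lower bound $\|d_u f\|>\eta>0$ in the strip; the normalised flows along $\pm\nabla J/\|\nabla J\|^2$, exactly as in Corollary~\ref{cor:cordefuniform}, produce a continuous shift $T(u,r)$ with $T(u,J(u))=u$, $T(u,r)\in\mathcal{U}(y)$, and $J(T(u,r))=r$ for $r$ in a uniformly small window.

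Given $s_*\in[E_1+\epsilon,E_2]$, a cube $I^n$, an initial lift $h_0:I^n\to J^{-1}(s_0)$ with $s_0\in I(s_*)$, and a homotopy $\tilde H:I^n\times[0,1]\to I(s_*)$ such that $\tilde H(\cdot,0)=J\circ h_0$, proceed as follows. The compact image $h_0(I^n)$ is covered by finitely many balls $B(u_1,r_2),\dots,B(u_k,r_2)$ (extracted exactly as in the proof of Theorem~\ref{thm:implicit}) together with $\mathcal{W}(\mathcal{B}^{E_2})$. Subdivide $I^n$ into small cubes $\{C_\beta\}$ and $[0,1]$ into small intervals $\{I_j\}$ so finely that each $h_0(C_\beta)$ sits inside a single chart, and the oscillation of $\tilde H$ on every $C_\beta\times I_j$ is less than a fixed fraction of the universal window. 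On each piece, lift by post-composing with the appropriate shift $S_{u_0}$ or $T$, and extend inductively across the skeleta of $I^n\times[0,1]$ as prescribed by Lemma~\ref{lemma:localserre}.

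The main obstacle is the \emph{continuous patching} of the lift over the overlap of an abnormal chart and the regular one: the paths produced by $S_{u_0}$ and by $T$ are a priori distinct, even though they share endpoints on $\mathcal{U}(y)\cap J^{-1}(r)$. The remedy is to straight-line interpolate the two candidate lifts in the ambient Hilbert space $L^2(I,\R^l)$ (which will in general leave $\mathcal{U}(y)$), and then reproject onto the joint fiber $\varphi^{-1}(y,r)$ using the cross-section $\sigma_{u_0}$; the equicontinuity of $\{\sigma_{u_0}\}$ bounds the reprojection uniformly. Balancing the length of $I(s_*)$, the radii $r_1,r_2$, and the moduli of continuity of $\sigma_{u_0}$ and of the normalised gradient flow is exactly what dictates the smallness of $\epsilon$; once this balance is secured the skeletal induction is routine and the resulting $\mathfrak{U}_n$ verifies the hypothesis of Lemma~\ref{lemma:localserre}.
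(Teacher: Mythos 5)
Your overall architecture coincides with the paper's: verify the fibration property locally on energy intervals via Lemma~\ref{lemma:localserre} (with the cover depending on $n$), shift energy near the abnormal set by the cross-section $\sigma_{u_0}$ of Theorem~\ref{thm:implicit} (your $S_{u_0}$ is exactly the paper's lift formula), and shift energy in the regular zone by the truncated gradient flow of Proposition~\ref{prop:defuniform}/Corollary~\ref{cor:cordefuniform}. However, at the one point you yourself identify as ``the main obstacle'' --- reconciling the flow-lift and the cross-section-lift on overlaps, and more generally extending a lift that is already prescribed on the lower skeleta --- your argument is only asserted, and as stated it does not close. Interpolating the two candidate lifts linearly in $L^2$ and reprojecting with $\sigma_{u_0}$ produces \emph{some} path in the fiber between them, but a homotopy lift must agree \emph{exactly} with the previously constructed lift on $\partial C_\beta\times I_j$ and on $C_\beta\times\{0\}$ (data which were themselves built by earlier patchings, not by either canonical formula), and must also be consistent on overlaps of two different abnormal charts $S_{u_1},S_{u_2}$, not only on ``abnormal versus regular'' overlaps. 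You give no mechanism (collar, partition of unity in the domain, or otherwise) turning the interpolation into such a relative extension, and no quantitative control guaranteeing that the interpolated points stay in $B(u_0,r_2)$ with defect in $B(0,r_1)$ after several inductive steps; ``balancing the radii'' is precisely the content that is missing.

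The paper avoids this patching problem altogether, and this is the idea absent from your proposal: the constants are chosen so that each lifting step moves points by at most $\mu/n$ with $\mu=\min\{\beta/8,r_2/3\}$, hence after at most $n$ skeletal steps the partial lift of a problematic $k$-cell, together with its $h_0$-image, still lies in a single ball $B(u_0,r_2)$ where one fixed cross-section $\sigma_{u_0}$ is available; then the relative extension over $(D^k\times I,\,D^k\times\{0\}\cup\partial D^k\times I)$ is converted into an absolute lifting problem via the homeomorphism of this pair with $(D^k\times I,\,D^k\times\{0\})$, and solved in one stroke by $\sigma_{u_0}$ --- so the already-prescribed boundary lift is matched by construction, and no two formulas ever need to be glued on an overlap. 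Cells mapped into $\mathcal{K}(\mathcal{B}^{E})$ are handled uniformly by the flow, which agrees with itself on shared boundaries. Unless you supply an analogous relative-extension mechanism and the $\mu/n$-type displacement bookkeeping (this is also where the dependence of $\delta$ and of your subdivision on $n$ really enters), your induction cannot be completed, so the proposal has a genuine gap at its decisive step.
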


      \begin{proof}
We will prove that for every $n\in \mathbb{N}$ there exists $\delta=\delta(n)$ such that for every point $s\in [E_1+\epsilon, E_2]$, denoting by:
\be I(s)=[E_1+\epsilon, E_2]\cap (s-\delta, s+\delta)\quad \textrm{and}\quad \mathcal{U}(y)_{I(s)}=\mathcal{U}(y)\cap J^{-1}(I(s)),\ee
then $J|_{J^{-1}(I(s))\cap \mathcal{U}(y)}$ has the homotopy lifting property with respect to all $n$-dimensional CW-complexes (or, equivalently with respect to all $n$-dimensional disks). The result will then follow from Lemma \ref{lemma:localserre}.

We denote by $\psi_t(\cdot)$ the deformation $\Theta(t,\cdot)$ coming from Proposition \ref{prop:defuniform}; it has the property that for every $u\in \mathcal{W}(\mathcal{B}^E)\cap \{E_1+\epsilon, J\leq E_2\}$ and $t<\epsilon'$:
\be \psi_t(u)\in \mathcal{U}(y)^E\backslash \mathcal{V}(A^{\epsilon})\quad \textrm{and}\quad J(\psi_t(u))=J(u)-t.\ee
Recalling the definition of $\mathcal{W}(\mathcal{B}^E)$ given in \eqref{eq:W}, we define the open set:
\be \mathcal{ K}(\mathcal{B}^{E})=\bigcup_{u\in \mathcal{B}^E}B(u, \beta/4).\ee
Let $r_2$ be given by Theorem \ref{thm:implicit} and define the number:
\be \mu=\min\left\{\frac{\beta}{8}, \frac{r_2}{3}\right\}.\ee

By Proposition \ref{prop:defuniform}, if $u\in \mathcal{K}(\mathcal{B}^E)$, and  $t<\min\{\epsilon',\frac{\mu}{n}\frac{1}{\eta}\}$, then:
\be\label{eq:disp1} \|u-\psi_t(u)\|\leq \frac{\mu}{n}.\ee

Moreover by the equicontinuity (at zero) of the family of functions $\{\sigma_{u_0}\}$ from Theorem \ref{thm:implicit}, there exists $c$ such that if $\|w-\varphi(u_0)\|+\|u-u_0\|\leq c$ then:
\be\label{eq:disp2} \|u+\sigma_{u_0}(w,u)-u_0\|\leq \frac{\mu}{n}.\ee

We define accordingly:
\be \delta=\delta(n)=\min\left\{\frac{\epsilon'}{2},\frac{\mu}{2n}\frac{1}{\eta}, \frac{c}{2}\right\}.\ee

Consider then a map $h_0:D^n\to \mathcal{U}(y)_{I(s)}$ lifting the homotopy $\tilde h_t:D^n\to I(s)$ at time $t=0.$ Endow $D^n$ with a CW-complex structure such that each cell is either entirely contained in $\mathcal{K}(\mathcal{B}^E)$ or entirely contained in $h_0^{-1}(B_{\frac{\mu}{n}}(h(x)))$ for some $x\in D^n$.

We lift the homotopy inductively on the skeleta of $D^n$, starting from the zero skeleton. If $x\in D^n$ is a point in the zero skeleton such that $h(x)\in\mathcal{K}( \mathcal{B}^E)$, then we define the homotopy $h_t|_{\{x\}}$ by:
\be
h_t(x)=\psi_{\tilde h_0(x)-\tilde h_t(x)}(h_0(x)),
\ee
in such a way that $J(h_t(x))=J(h_0(x))-\tilde h_0(x)+\tilde h_t(x)=\tilde h_t(x).$

If otherwise $x\notin\mathcal{K}( \mathcal{B}^E)$, then $x\in  B_{\frac{\mu}{n}}(u_0(x))$ for some $u_0=u_0(x)$. We consider the corresponding function $\sigma=\sigma_{u_0(x)}:B(0, r_1)\times B(u_0(x), r_2)\to L^2$ given by Theorem \ref{thm:implicit}.  Then $h_t|_{\{x\}}$ is defined as:
\be h_t(x)=h_0(x)+\sigma_{u_0(x)}(\underbrace{( \tilde h_t(x), y), h_0(x)}_{\in \R\times\R^m\times L^2 })\ee
which lifts $\tilde h_t(x)$ by Theorem \eqref{thm:implicit}.

Notice that, because of \eqref{eq:disp1} and \eqref{eq:disp2}, during the homotopy the point $h_t(x)$ has been moved from its original location at most at a distance:
\be\label{eq:dis} \|h_0(x)-h_t(x)\|\leq \frac{\mu}{n} .\ee

Assume now that the homotopy has been lifted to the $(k-1)$-skeleton of $D^n$ (with the CW-complex structure defined before). Composing with the characteristic map $\phi:D^k\to D^n$ of a cell, reduces to the case when we have to extend to the whole disk $D^k$ a homotopy which has been defined already on $\partial D^k$.

If $h_0(D^k)$ is entirely contained in $\mathcal{K}(\mathcal{B}^E)$, we simply define the homotopy using the flow as above:
\be h_{t}(x)=\psi_{\tilde h_0(x)-\tilde h_t(x)}(h_0(x)), \quad x\in D^k.\ee
This homotopy glues on the boundary $\partial D^k$, which by \eqref{eq:dis} is also entirely contained in $\mathcal{K}(\mathcal{B}^E)$ and for which the homotopy was defined by the flow.

Otherwise there exists $u_0$ such that $h_0(D^k)\subset B_{\frac{\mu}{n}}(u_0)$. We need to extend to the all disk a homotopy that has been defined already on $\partial D^k$; notice that since at each previous inductive step the homotopy moved the points from their original location at a distance at most $\frac{\mu}{n}$, then:
\be h_t(\partial D^k)\subset  B_{\frac{k \mu}{n}}(u_0)\subset  B_{r_2}(u_0) \quad \forall t\in I.\ee
In other words, denoting by $H:D^n\times I\to \mathcal{U}(y)$ the partially defined homotopy, we have:
\be H(D^{k}\times I, D^{k}\times \{0\}\cup \partial D^k\times I)\subset B_{r_2}(u_0).\ee
The pairs $(D^{k}\times I, D^{k}\times \{0\}\cup \partial D^k\times I)$ and  $(D^k\times I, D^{k}\times \{0\})$ are homeomorphic and the extension problem is equivalent to just the homotopy lifting property for a map from the $k$-dimensional disk to $I(s)$ with a time-zero lift all in $B_{r_2}(u_0)$.
For such a map $h_0:D^k\to B_{r_2}(u_0)$ we define again the lifting homotopy as:
\be  h_t(x)=h_0+\sigma_{u_0(x)}((\tilde h_t(x),y), h_0(x)).\ee
This concludes the proof.
\end{proof}
  \subsection{The deformation Lemma}\label{sec:deformation}
   \begin{thm}[subriemannian Deformation Lemma]\label{thm:deformation}
    	Assume that  all singular curves with $J\leq E_2$ are soft and that there are no normal geodesics in $\Omega(y)$ with Energy between $E_1$ and $E_2$.
	Then for every $\epsilon>0$, every compact manifold $X$ and any continuous map $h:X\to\Omega(y)^{E_2}$ there exists an homotopy $h_t:X\to \Omega(y)^{E_2}$ such that $h_0=h$ and $h_1(X)\subset \Omega(y)^{E_1+\epsilon}$.
    \end{thm}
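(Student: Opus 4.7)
The plan is to reduce the Deformation Lemma to the Serre fibration property (Theorem~\ref{thm:serre}) by working in the global chart. First I would lift the map to controls by setting $\tilde h = \mu \circ h \colon X \to \mathcal{U}(y)^{E_2}$, with $\mu$ the minimal control. Since $A \circ \mu = \mathrm{id}$ and $J(A(u)) \leq J(u)$ by equation~\eqref{eq:Ju*}, it suffices to construct a homotopy $\tilde h_t \colon X \to \mathcal{U}(y)^{E_2}$ with $\tilde h_0 = \tilde h$ and $\tilde h_1(X) \subset \mathcal{U}(y)^{E_1+\epsilon}$; composing with $A$ then yields the required $h_t$ in $\Omega(y)^{E_2}$.

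Next, pick $0 < \epsilon'' < \epsilon' < \epsilon$ small enough that Theorem~\ref{thm:serre} produces a Serre fibration $p := J \colon \mathcal{U}(y)_{E_1+\epsilon''}^{E_2} \to [E_1+\epsilon'', E_2]$. Design a continuous base homotopy $\rho \colon [0, E_2] \times I \to [0, E_2]$ by setting $\rho(s,t)=s$ for $s \leq E_1+\epsilon'$ and $\rho(s,t)=s - t(s - E_1 - \epsilon')$ for $s \geq E_1 + \epsilon'$; then $\rho(\cdot,0)=\mathrm{id}$, $\rho(s,1) \leq E_1+\epsilon'$, and $\rho$ is constant in $t$ on low energies $s \leq E_1+\epsilon'$. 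The target is to lift the induced base homotopy $\tilde \rho(x,t) = \rho(J(\tilde h(x)),t)$ to a homotopy of $\tilde h$ inside $\mathcal{U}(y)^{E_2}$.

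To build the lift, triangulate the compact manifold $X$ so finely that $J \circ \tilde h$ varies by less than $(\epsilon'-\epsilon'')/2$ on each closed cell $C$. Classify $C$ as \emph{low} if $(J \circ \tilde h)(C) \subset [0, E_1+\epsilon']$, and \emph{high} otherwise; the diameter bound forces high cells to satisfy $(J \circ \tilde h)(C) \subset [E_1+\epsilon'', E_2]$, so $\tilde h(C)$ lands in the total space of $p$. On low cells set $\tilde h_t \equiv \tilde h$, which lifts $\tilde \rho$ because $\rho$ is constant in $t$ on low energies. On high cells, proceed by induction over skeleta: using the HLP for $(D^k, S^{k-1})$ provided by the Serre fibration $p$, extend the lift from $\partial C$ (constructed at the previous step) to the whole cell $C$, lifting $\tilde \rho|_C$. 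Where the boundary of a high cell meets a low cell, the prescribed boundary lift is the constant $\tilde h$ and $\tilde \rho$ is simultaneously constant there (since $J \circ \tilde h \leq E_1+\epsilon'$ on such overlaps), so the two constructions glue continuously.

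The main obstacle is the construction of the Serre lift on high cells that pass close to $\mathcal{A}^E$, where the endpoint map is degenerate and the gradient flow $\Theta$ of Proposition~\ref{prop:defuniform} is not defined. This is precisely the content of the proof of Theorem~\ref{thm:serre}, which glues $\Theta$ on the regular set $\mathcal{K}(\mathcal{B}^E)$ with the implicit-function cross section $\sigma_{u_0}$ of Theorem~\ref{thm:implicit} in a weak neighborhood of each soft abnormal control. Once this Serre lift is accepted, at $t=1$ every high cell's image lies in $\{J=E_1+\epsilon'\}$ and every low cell's image satisfies $J \leq E_1+\epsilon'$, so $\tilde h_1(X) \subset \mathcal{U}(y)^{E_1+\epsilon'} \subset \mathcal{U}(y)^{E_1+\epsilon}$, and applying $A$ completes the proof.
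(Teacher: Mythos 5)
Your proposal is correct and follows essentially the same route as the paper: reduce to the global chart via the minimal control $\mu$ and the map $A$, then invoke the Serre fibration property of Theorem \ref{thm:serre} to lift a base homotopy that pushes energies down to $E_1+\epsilon'$ while remaining stationary where $J\circ \tilde h$ is already low, and finally push forward by $A$. The only differences are bookkeeping --- the paper isolates the high-energy region with a partition of unity and a smooth manifold with boundary $(M,\partial M)$, applying the CW-pair lifting property once, whereas you triangulate finely and induct over skeleta --- together with the fact that the paper explicitly verifies the curve-to-control transfer of the hypotheses (softness of singular controls, normal controls projecting to normal geodesics), a short step you leave implicit when applying Theorem \ref{thm:serre} in $\mathcal{U}(y)$.
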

    \begin{proof}
    Let us first show how to reduce the proof for horizontal curves to the proof of the exact same statement for the global chart. Notice first that, as we have shown in Section  \ref{subsect:genericprop}, if a singular curve $\gamma$ is soft, then the same is true for any control whose associated trajectory is $\gamma$; moreover if $u$ is a singular control with $J(u)\leq E_2$ which is not soft, then the corresponding trajectory $\gamma_u$ is also singular, it is not soft and its energy satisfies $J(\gamma_u)\leq E_2$. Also if $u$ is a normal control, then $A(u)$ is a normal geodesic. This can be seen as follows: being locally length minimizing, $A(u)$ can be either the projection of a normal or an abnormal extremal. If it were the projection of an abnormal extremal, it would be a singular curve, hence of corank one, and strictly abnormal, contradicting the existence of a normal extremal lift for $u$.
In particular the hypothesis at the level of horizontal curves imply the same hypothesis for the set of controls.

    Given $h$, we can consider the function:
    \be \overline{h}:X\to\mathcal{U}, \quad \overline{h}(\theta)=\mu(h(\theta))\ee
    (recall that $\mu:\Omega \to \mathcal{U}$ denotes the minimal control). Then, because $J(\mu(\gamma))=J(\gamma)$, we have $\overline{h}(X)\subset \mathcal{U}(y)^{E_2}$.

   If we can find a homotopy $\overline{h}_t:X\to \mathcal{U}(y)^{E_2}$ with the property that $\overline{h}_1(X)\subset \mathcal{U}(y)^{E_1+\epsilon}$, then the function:
   \be h_t=A\circ \overline h_t:X\to \Omega(y)\ee
   defines the desired homotopy. In fact $h_0(\theta)=A(\overline h_0(\theta))=A(\mu(h(\theta)))=h(\theta)$ for every $\theta\in X$; moreover since $J(A(\overline h_t(\theta)))\leq J(\overline h_t(\theta))$ then we also have $h_t(X)\subset \Omega(y)^{E_2}$ and $h_1(X)\subset \Omega(y)^{E_1+\epsilon}.$

  This reduces to prove that the same statement holds true if $\Omega(y)$ is replaced with $\mathcal{U}(y)$.
Consider the two open sets of $V_1=h^{-1}(\{J<E_1+\epsilon\})$ and $V_2=h^{-1}(\{J> E_1+\epsilon/2\})$. Let $\{\rho_1, \rho_2\}$ be a smooth partition of unity subordinated to the open cover $\{V_1, V_2\}$ of $X$. Then $\rho_2|_{h^{-1}(\{J\geq E_1+\epsilon\})}\equiv 1$ and $\rho_2|_{h^{-1}(\{J\leq E_1+\epsilon/2\})}\equiv 0$. Let $c\in (E_1+\epsilon/2, E_1+\epsilon)$ be a regular value of $\rho_2$ and consider the smooth submanifold:
    \be M=\{\rho_2\geq c\}\quad\textrm{with} \quad \partial M=\{\rho_2=c\}.\ee
    The pair $(M, \partial M)$ is a CW-complex pair and $h(M)\subset  \{J\geq c\}.$ Then by Theorem \ref{thm:serre} there exists a homotopy
    \be\tilde H:M\times I\to \Omega(y)\cap \{c\leq J\leq E_2\}\ee
     such that $\tilde H(\cdot, 0)=h|_M$, $\tilde h_1(M)\subset \{J\leq \epsilon\}$ and $\tilde H(\cdot, t)|_{\partial M}\equiv h|_{\partial M}.$
     The desired homotopy $H:X\times I\to \mathcal{U}(y)^{E_2}$ is defined by:
     \be H(x, t)=\left\{\begin{array}{cc}h(x)&\overline x\in X\backslash M\\
	&\\
	\tilde H(x,t)& x\in M\end{array}\right.\ee
       \end{proof}
	
	\section{Applications}\label{sec:applications}
\subsection{A subriemannian Minimax principle}
In this section we prove a subriemannian version of the classical Minimax principle for variational problems. If $X$ is a compact manifold and two continuous maps $f,g:X\to \Omega(y)$ are homotopic we will write $f\sim g$.
\begin{thm}[subriemannian Minimax principle]\label{thm:minmax} Let $X$ be a compact manifold and $f:X\to \Omega(y)$ be a continuous map. Consider:
\be c=\inf_{g\sim f}\sup_{\theta \in X} J(g(\theta)).\ee
Assume that $c>0$ and that there exists $\delta>0$ such that all singular curves with energy $J\leq c+\delta$ are soft.
Then for every $\delta>\epsilon>0$ there exists a normal geodesic $\gamma_\epsilon$ such that:
\be c-\epsilon\leq J(\gamma_\epsilon)\leq c+\epsilon.\ee

\end{thm}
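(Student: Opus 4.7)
The plan is the classical minimax argument combined with the subriemannian Deformation Lemma (Theorem \ref{thm:deformation}). I argue by contradiction: suppose that for some $0<\epsilon<\delta$ there is \emph{no} normal geodesic $\gamma$ in $\Omega(y)$ satisfying $c-\epsilon\leq J(\gamma)\leq c+\epsilon$. I will produce a map $\widehat g\sim f$ with $\sup_\theta J(\widehat g(\theta))<c$, contradicting the definition of $c$ as an infimum.

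First, by the very definition of $c$ there exists $g\sim f$ with
\be
\sup_{\theta\in X} J(g(\theta))\;<\;c+\tfrac{\epsilon}{2},
\ee
so in particular $g(X)\subset \Omega(y)^{E_2}$ with $E_2=c+\epsilon$. The hypothesis $\epsilon<\delta$ guarantees that every singular curve with $J\leq E_2\leq c+\delta$ is soft, so the standing assumption of Theorem \ref{thm:deformation} is met. Setting $E_1=c-\epsilon$, the contradiction assumption states precisely that there are no normal geodesics with energy in $[E_1,E_2]$, so Theorem \ref{thm:deformation} applies to the pair $(E_1,E_2)$.

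Now pick any $0<\epsilon'<\epsilon$ such that $E_1+\epsilon'=c-\epsilon+\epsilon'<c$. Applying Theorem \ref{thm:deformation} to the continuous map $g:X\to \Omega(y)^{E_2}$ yields a homotopy $g_t:X\to\Omega(y)^{E_2}$ with $g_0=g$ and
\be
g_1(X)\;\subset\;\Omega(y)^{E_1+\epsilon'}\;=\;\Omega(y)^{c-\epsilon+\epsilon'}.
\ee
In particular $g_1\sim g\sim f$, and
\be
\sup_{\theta\in X} J(g_1(\theta))\;\leq\;c-\epsilon+\epsilon'\;<\;c.
\ee
This contradicts the defining property of $c=\inf_{h\sim f}\sup_\theta J(h(\theta))$, proving that a normal geodesic with $c-\epsilon\leq J\leq c+\epsilon$ must exist.

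I expect no real obstacle here: the whole substance of the theorem has already been absorbed into Theorem \ref{thm:deformation}, and the minimax deduction is a purely formal contradiction argument. The only point requiring a brief check is the choice of $\epsilon'$, which must simultaneously be admissible as the parameter ``$\epsilon$'' in the statement of the Deformation Lemma and satisfy $\epsilon'<\epsilon$; but since the Deformation Lemma permits arbitrarily small positive values, this is harmless.
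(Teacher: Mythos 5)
Your proposal is correct and follows essentially the same route as the paper's proof: argue by contradiction, pick $g\sim f$ with $\sup_\theta J(g(\theta))$ just above $c$, and use the Deformation Lemma (Theorem \ref{thm:deformation}) with $E_1=c-\epsilon$, $E_2=c+\epsilon$ to push $g$ strictly below the level $c$, contradicting the definition of the minimax value. The paper simply fixes the auxiliary parameter as $\epsilon/2$ where you keep a general $\epsilon'<\epsilon$; the arguments are otherwise identical.
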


\begin{proof}Assume that the claim is false. Then there exists $\delta>\epsilon>0$ such that any curve in $\Omega(y)^{c+\epsilon}_{c-\epsilon}$ is either regular or a soft abnormal. 	
Then let $g\sim f$ such that $\sup_{\theta \in X} J(g(\theta))\leq c+\epsilon$. By Theorem \ref{thm:deformation} the map $g$ is homotopic to a map $g':X\to \Omega(y)^{c-\epsilon/2}$, which contradicts the definition of $c.$
\end{proof}

Note that if $x$ (the starting point) is different from $y$, then the Minimax value $c$ given by the previous theorem is automatically different from zero. In the case $x=y$ we have the following result (no assumption on the singular curves is required).
\begin{prop}Assume $x=y$. Then there exists $\epsilon>0$ such that any map $f:X\to \Omega(x)$ satisfying $\sup_{\theta\in X}J(f(\theta))\leq \epsilon$ is homotopic to a constant map.
\end{prop}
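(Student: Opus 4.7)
The plan is to prove that for sufficiently small $\epsilon>0$ the sublevel set $\Omega(x)^\epsilon$ is null-homotopic inside $\Omega(x)$, which immediately implies the claim since any $f\colon X\to \Omega(x)^\epsilon$ then factors through a null-homotopic inclusion. The first observation is that $J(\gamma)\leq \epsilon$ forces the image of $\gamma$ into the closed subriemannian ball $\overline{B}_{SR}(x,\sqrt{2\epsilon})$: indeed Cauchy--Schwarz gives $d_{SR}(x,\gamma(t))\leq\int_0^t|\dot\gamma|\,d\tau\leq\sqrt{2J(\gamma)}$. So for $\epsilon$ small enough every curve in $\Omega(x)^\epsilon$ has image in a fixed neighborhood $U$ of $x$ on which the subriemannian geometry is well controlled.

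The core construction is carried out in the global chart of Section~\ref{sec:global}. I would build a continuous local section $\nu\colon U\to\mathcal{U}$ of the endpoint map $F$, that is, $F(\nu(y))=y$, $\nu(x)=0$, with $\|\nu(y)\|_{L^2}\to 0$ as $y\to x$. Existence of such $\nu$ is a classical consequence of the Chow--Rashevskii theorem combined with the Ball--Box theorem and a direct construction in privileged coordinates centered at $x$ (concatenating explicit flow-segments along the $X_i$'s and a prescribed family of their iterated brackets, as developed e.g.\ in \cite[Chapter 10]{AgrachevBarilariBoscain}). Given $\nu$, for $\gamma\in\Omega(x)^\epsilon$ with minimal control $u=\mu(\gamma)$ I would define a homotopy of controls
\be
u_s=(1-s)u\,\star\,\nu^{-}\!\bigl(F((1-s)u)\bigr),\qquad s\in[0,1],
\ee
where $\star$ denotes concatenation (reparametrized to $[0,1]$) and $\nu^{-}(y)(t)=-\nu(y)(1-t)$ is the time-reversed control steering from $y$ back to $x$. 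By construction $F(u_s)=x$ for every $s$: the trajectory first follows the rescaled control $(1-s)u$ from $x$ to $F((1-s)u)$, then returns to $x$ via $\nu^{-}$. At $s=0$ one has $\nu^{-}(F(u))=\nu^{-}(x)=0$, so $A(u_0)$ equals $\gamma$ up to a harmless reparametrization that can be absorbed in an initial, trivial reparametrization homotopy. At $s=1$ one has $u_1=0\star 0=0$ and $A(u_1)\equiv x$. Setting $\gamma_s=A(u_s)$ and applying the construction pointwise to $f$ produces the desired null-homotopy.

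The main obstacle is twofold. First, producing the continuous section $\nu$ requires genuine subriemannian input (Chow's theorem alone only gives \emph{existence}, not continuous selection); the classical workaround via privileged coordinates and the nilpotent approximation is standard but technical. Second, one must verify continuity of $s\mapsto u_s$ and of $\gamma\mapsto u_s$ in the $W^{1,2}$ topology, and in particular that $\|u_s\|_{L^2}\to 0$ as $s\to 1$. This follows because $s\mapsto F((1-s)u)$ is continuous with limit $x$, so $\|\nu(F((1-s)u))\|_{L^2}\to 0$ by continuity of $\nu$, while $(1-s)u\to 0$ in $L^2$. Finally, $\epsilon$ must be chosen small enough that $F((1-s)u)$ stays inside the domain $U$ of $\nu$ for every admissible $u$ and every $s\in[0,1]$; this is guaranteed by the continuity of $F$ on the weakly compact set $\{J\leq\epsilon\}$, since the image $\{F((1-s)u):s\in[0,1],\,J(u)\leq\epsilon\}$ shrinks to $\{x\}$ as $\epsilon\to 0$.
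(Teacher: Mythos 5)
Your strategy is genuinely different from the paper's and, as a scheme, it is sound: the homotopy $u_s=(1-s)u\,\star\,\nu^{-}(F((1-s)u))$ of controls does keep the endpoint at $x$, is jointly continuous (using continuity of the minimal control $\mu$, of $A$, of $F$ and of concatenation), the length bound $d_{SR}(x,F((1-s)u))\leq\|(1-s)u\|_{L^2}\leq\sqrt{2\epsilon}$ keeps you inside the domain of $\nu$ for small $\epsilon$, and the reparametrization issue at $s=0$ is indeed harmless since no energy bound is required along the homotopy. The paper instead proves the proposition in three lines of a purely topological nature: by \cite{BoarottoLerario} the space $\Omega(x)$ has contractible neighborhoods, and the sublevels $\{J<t\}$ form a neighborhood basis of the constant curve (because $J(\gamma)=\tfrac12\|\mu(\gamma)\|^2$ and $\mu$, $A$ are continuous), so a small sublevel sits inside a neighborhood which is contractible in $\Omega(x)$ and any map into it is null-homotopic. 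Your argument, if completed, would give more (an explicit contraction of $\Omega(x)^\epsilon$), but it pays for it with a construction the paper never needs.

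The genuine gap is the load-bearing ingredient you label as ``classical'': a \emph{continuous} local section $\nu:U\to\mathcal{U}$ of the endpoint map with $\nu(x)=0$ and $\|\nu(y)\|_{L^2}\to 0$ as $y\to x$. This does not follow from Chow--Rashevskii or the Ball--Box theorem as usually stated: those results give \emph{existence} of short connecting curves and two-sided distance estimates, not a continuous selection of controls. Nor can one invoke any implicit function theorem at the zero control, since the constant curve is a critical point of $F$ (its differential has image $\Delta_x$), and -- as the paper stresses -- this proposition must hold with no softness assumption, so the machinery of Sections \ref{sec:cross}--\ref{sec:implicit} is unavailable here. The standard constructive proof of Chow (compositions of flows through a point where the parameter map is a submersion) does yield a continuous, even smooth, choice of parameters, but the resulting controls have norm of a fixed small size, not one tending to $0$ at $x$; upgrading to $\|\nu(y)\|\to 0$ requires the nilpotent approximation/anisotropic dilations and a perturbation argument, i.e.\ a nontrivial lemma that you would have to prove or locate precisely (it is closer to the cross-section constructions of \cite{BoarottoLerario} than to anything stated in \cite{AgrachevBarilariBoscain}). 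Either supply that lemma, or bypass it as the paper does by quoting the local contractibility of $\Omega(x)$ together with the fact that the energy sublevels are a neighborhood basis at the constant curve.
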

\begin{proof}By \cite[Corollary 7]{BoarottoLerario} the space $\Omega(x)$ has the homotopy type of a CW-complex, and in particular any point in it has a contractible neighborhood. Consider the constant curve $\gamma(y)\equiv x$, and a neighborhood $U_{\gamma}\subset \Omega(x)$ which is contractible in $\Omega(x)$. Since the family $\{J< t\}_{t\in \R}$ is a local basis for $\Omega(x)$ at the constant curve $\gamma$, then there exists $\epsilon$ such that $\{J\leq\epsilon\}\subset U_{\gamma}$. As a consequence, if $\textrm{im} (f)\subset \{J\leq \epsilon\}$, then $f$ is homotopic to a constant map.
\end{proof}

\subsection{Serre's theorem and another deformation lemma}

We finish this section with a proof of a subriemannian version of Serre's Theorem, providing the existence of infinitely many geodesics between any two points $x$ and $y$ on a compact subriemannian manifold. The case when $y$ is a regular value for the endpoint map centered at $x$ and the case of a contact manifolds are proved in \cite{BoarottoLerario}. We will need the following variation of the deformation lemma.

\begin{lemma}Assume all singular curves with $J\leq E$ are soft. Let $0<s<E$ and $\mathcal{M}$ be a neighborhood of $\mathcal{C}\cap\{J=s\}$. Then for every $n\in \mathbb{N}$ there exists $\epsilon=\epsilon(n)$ such that for every $n$-dimensional simplicial complex $X$ and any continuous map:
\be f:X\to\Omega(y)^{s+\epsilon}\backslash \mathcal{M}\ee
there exists a homotopy of maps $f_t:X\to \Omega(y)^{s+\epsilon}\backslash \mathcal{M}$, $t\in [0,1]$, such that $f_0=f$ and:
\be f_1:X\to \Omega^{s-\epsilon}.\ee
\end{lemma}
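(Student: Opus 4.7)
The plan is to adapt the Serre fibration argument of Theorem \ref{thm:serre} and the deformation scheme of Theorem \ref{thm:deformation} to the situation where normal geodesics exist at the critical level $s$ but have been excised by the neighborhood $\mathcal{M}$. As in the proof of Theorem \ref{thm:deformation}, I first reduce to the control space via the minimal control map: setting $\widetilde{\mathcal{M}}:=A^{-1}(\mathcal{M})\cap\mathcal{U}(y)$ (strongly open since $A$ is continuous), it is enough to build a homotopy of controls $\overline f_t:X\to\mathcal{U}(y)^{s+\epsilon}\setminus\widetilde{\mathcal{M}}$ starting at $\overline f_0=\mu\circ f$ and ending in $\mathcal{U}(y)^{s-\epsilon}$, and then post-compose with $A$. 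By softness the critical set $\mathcal{C}_s:=\mathcal{C}\cap\{J=s\}$ is disjoint from $\mathcal{A}^E$, and by Proposition \ref{prop:PS} it is compact; invoking Lemma \ref{lemma:wdist} I select nested open neighborhoods $\mathcal{N}'\subset\overline{\mathcal{N}'}\subset\mathcal{N}\subset\overline{\mathcal{N}}\subset\widetilde{\mathcal{M}}$ of $\mathcal{C}_s$ with positive mutual separation and positive strong distance from $\mathcal{A}^E$, and apply Corollary \ref{cor:normgeod} with this $\mathcal{N}$ to produce a downward gradient-flow deformation $\Theta$ satisfying the four properties of Proposition \ref{prop:def}.

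Once this machinery is in place, the homotopy $\overline f_t$ is constructed by lifting the target homotopy $\tilde h_t(x):=J(\overline f(x))-2\epsilon t$ along the energy, exactly following the scheme of Theorem \ref{thm:serre}. I endow $X$ with a simplicial subdivision (depending on $n$) adapted to $\overline f$, so that the image of every closed cell under $\overline f$ lies either in some small ball $B(u_0,\mu/n)$ with $u_0\in\mathcal{A}^E$, or entirely inside $\mathcal{W}(\mathcal{B}^E)\setminus\overline{\mathcal{N}}$. On cells of the second type the lift is given by the flow $\Theta$, and on cells of the first type it is given by the equicontinuous family of cross-sections $\sigma_{u_0}$ from Theorem \ref{thm:implicit}, combined with the normal-coordinate construction of Proposition \ref{prop:proof}. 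The induction over the skeleta of $X$ is identical to the one carried out in the proof of Theorem \ref{thm:serre}, so the only thing to verify is that the resulting homotopy has its image inside $\mathcal{U}(y)^{s+\epsilon}\setminus\widetilde{\mathcal{M}}$.

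This last point is the main obstacle, and is also the reason why $\epsilon$ must depend on $n$: unlike Theorem \ref{thm:serre}, where the flow only had to avoid $\mathcal{V}(\mathcal{A}^E)$, here we must also avoid the larger open set $\widetilde{\mathcal{M}}$, and $\Theta$ is not designed to steer clear of $\widetilde{\mathcal{M}}\setminus\mathcal{N}$. The key observation is that $\overline f(X)$ is compact and disjoint from $\widetilde{\mathcal{M}}$, hence its strong distance $d_0:=\mathrm{dist}(\overline f(X),\widetilde{\mathcal{M}})$ is positive. Since the flow $\Theta$ has speed bounded by $1/\eta$ and the total energy that must be dissipated is $2\epsilon$, each invocation of $\Theta$ displaces a point by at most $2\epsilon/\eta$; combined with the equicontinuity of $\{\sigma_{u_0}\}_{u_0\in \mathcal{A}^E}$, the cumulative displacement over the $n$ skeletal stages is bounded by a quantity of order $\epsilon/\eta+\mu$. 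Choosing $\epsilon=\epsilon(n)$ small enough that this bound is less than $d_0/2$ (and smaller than the various intrinsic parameters $\varepsilon'$, $\eta$, $r_1$, $r_2$ appearing in Proposition \ref{prop:def} and Theorem \ref{thm:implicit}) forces the homotopy to remain in the $d_0/2$-neighborhood of $\overline f(X)$, hence outside $\widetilde{\mathcal{M}}$, completing the argument.
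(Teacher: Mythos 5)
Your overall toolkit is the right one (reduction to controls via $\mu$ and $A$, the flow of Corollary \ref{cor:normgeod} away from the excised critical set, the cross-sections $\sigma_{u_0}$ of Theorem \ref{thm:implicit}, and the skeleton-by-skeleton lifting of Theorem \ref{thm:serre}), but the step you yourself identify as ``the main obstacle'' is where the argument breaks. You choose $\epsilon$ so that the total displacement is smaller than $\tfrac12 d_0$ with $d_0=\mathrm{dist}(\overline f(X),\widetilde{\mathcal M})$. This cannot work for two reasons. First, it inverts the quantifiers of the statement: $\epsilon=\epsilon(n)$ must be fixed \emph{before} $X$ and $f$ are given (indeed the very domain $\Omega(y)^{s+\epsilon}\setminus\mathcal M$ of $f$ depends on $\epsilon$), while your $d_0$ depends on $f$; there is no uniform positive lower bound for $d_0$ over all admissible $f$. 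Second, $d_0$ need not even be positive: $\widetilde{\mathcal M}=A^{-1}(\mathcal M)$ is open but not closed, and a compact image disjoint from it may touch its closure, so $\mathrm{dist}(\overline f(X),\widetilde{\mathcal M})=0$ is possible. A metric ``safety margin'' is therefore not the mechanism by which the homotopy avoids $\mathcal M$; in the paper the avoidance is built into the deformation itself, by applying Corollary \ref{cor:normgeod} with a neighborhood $\mathcal N'$ of $\mathcal C_s$ inside $A^{-1}(\mathcal M)$ (the cutoff of Proposition \ref{prop:def} is constructed relative to that excised set), and the resulting $\epsilon(n)=\min\{\epsilon'/2,\,\mu/(2n\eta),\,c/2,\,r_1/2\}$ involves only constants coming from Proposition \ref{prop:def} and Theorem \ref{thm:implicit}, independent of $f$.

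A second gap is your prescribed energy homotopy $\tilde h_t(x)=J(\overline f(x))-2\epsilon t$ on all of $X$. Points of $X$ whose energy is well below $s$ cannot be lifted along such a homotopy: the vector field of Proposition \ref{prop:def} is cut off outside a small band around the level $s$ (so the flow is stationary there and does not realize the prescribed decrease), the cross-sections only cover energy shifts of size less than $r_1$, and for small $J(\overline f(x))$ the target level even becomes negative; moreover such points do not need to move at all. This is precisely why the paper does not deform $X$ globally: it first proves a \emph{relative} claim for $n$-dimensional simplicial pairs $(Y,Z)$ with image in the strip $\{s-2\epsilon\le J\le s+\epsilon\}$, with the homotopy stationary on $Z\subset\{J\le s-\epsilon\}$ and target level $\tilde h_t(z)=J(f(z))(1-t)+t(s-\epsilon)$, and then deduces the lemma by decomposing $X=Y_1\cup Y_2$ into subcomplexes subordinate to $f^{-1}(\{J<s-\epsilon\})$ and $f^{-1}(\{J>c\})$, applying the claim to $(Y_2,Y_1\cap Y_2)$ and gluing with the constant homotopy on $Y_1$. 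Without this relative formulation and gluing step (or an equivalent device), your construction does not produce a well-defined homotopy of the whole complex ending in $\Omega^{s-\epsilon}$.
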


\begin{proof}
First, arguing as in the proof of Theorem \ref{thm:deformation}, we reduce to prove the statement for $\mathcal{U}$ instead of $\Omega$, thus working with $\mathcal{N}=A^{-1}(\mathcal{M})$ instead of $\mathcal{M}$.

We claim that the statement follows from the following fact (whose proof we postpone): there exists $\epsilon>0$ such that for every $n$-dimensional simplicial pair $(Y, Z)$, $Z\subset Y$ and any continuous map $f:Y\to\left( \mathcal{U}(y)\backslash\mathcal{M}\right)\cap\{s-2\epsilon\leq J\leq s+\epsilon\}$ such that $f(Z)\subset \{J\leq s-\epsilon\}$ we can find a homotopy $f_t:Y\to \left( \mathcal{U}(y)\backslash\mathcal{M}\right)\cap\{s-2\epsilon\leq J\leq s+\epsilon\}$ with $f_1(Y)\subset \{J\leq s-\epsilon\}$ and $f_t|_{Z}\equiv f.$
To see that this implies the statement consider $c\in (s-2\epsilon, s-\epsilon)$ and the two open sets $V_1=f^{-1}(\{J<s-\epsilon\})$ and $V_2=f^{-1}(\{J>c\}).$ Since $X$ is a simplicial complex, there exist subcomplexes $Y_1, Y_2\subset X$ such that $X=Y_1\cup Y_2$ and $Y_1\subset V_1$, $Y_2\subset V_2$. Applying the claim to the map $f|_{Y_2}$ and the pair $(Y_2, Y_2\cap Y_1)$ provides a homotopy $\tilde f_t:Y_2\to  \left( \mathcal{U}(y)\backslash\mathcal{M}\right)\cap\{s-2\epsilon\leq J\leq s+\epsilon\}$ which is stationary on $Y_1\cap Y_2$ and which consequently glues to a homotopy $f_t:X\to  \left( \mathcal{U}(y)\backslash\mathcal{M}\right)\cap\{s-2\epsilon\leq J\leq s+\epsilon\}$ which is  just $f$ on $Y_2$ and which satisfies the needed requirements.

It remains to prove the claim. Arguing as in Lemma \ref{lemma:localserre}, we see that instead of proving it for any simplicial pair, it is enough to do it for a map from the disk $D^n$:
\be f:D^n \to\left( \mathcal{U}(y)\backslash\mathcal{M}\right)\cap\{s-2\epsilon\leq J\leq s+\epsilon\}\ee
($\epsilon$ will be chosen below).

First we use Corollary \ref{cor:normgeod}, which provides us with $0<\eta'<\eta$ such that for every $\epsilon'<\epsilon''<\eta'<\eta$ we have a deforming map:
\be \psi_t:\mathcal{W}(\mathcal{B}^E)\to \mathcal{U}(y)\backslash \mathcal{V}(\mathcal{A}^E)\ee
with the properties: $\psi_0\equiv \textrm{id}$, $\psi_t|_{\{J\leq s-\epsilon''\}\cup\{J\geq s+\epsilon''\}}\equiv \textrm{id}$ and:
\be\psi_{\overline{t}}\left((\mathcal{W}(\mathcal{B}^E)\backslash \mathcal{N})\cap\{J\leq s+\epsilon'\}\right)\subset \mathcal{U}^E\backslash \mathcal{V}(\mathcal{A}^E)\cap \{J\leq s-\epsilon'\}\ee
(here $\overline t=\eta\beta/2$).
We define $\tilde\psi_t=\psi_{t/\overline t}$, $t\in [0,1]$ so that we have completely deformed at time $t=1;$ moreover (using the notation of Proposition \ref{prop:def}) if $t<\frac{\mu \overline t \eta}{n}$ we also have:
\be \|\psi_t(u)-u\|\leq \int_{0}^{\frac{t}{\overline t}}\|Y(\psi_\tau^Y(u))\|d\tau\leq \frac{t}{\overline t \eta}\leq \frac{\mu}{n}.\ee
As before, by the equicontinuity (at zero) of the family of functions $\{\sigma_{u_0}\}$ from Theorem \ref{thm:implicit}, there exists $c$ such that if $\|w-\varphi(u_0)\|+\|u-u_0\|\leq c$ then:
\be\label{eq:disp2} \|u+\sigma_{u_0}(w,u)-u_0\|\leq \frac{\mu}{n}.\ee

We define accordingly:
\be \epsilon=\epsilon(n)=\min\left\{\frac{\epsilon'}{2},\frac{\mu}{2n}\frac{1}{\eta}, \frac{c}{2}, \frac{r_1}{2}\right\}.\ee
The proof now proceeds exactly as the proof of Theorem \ref{thm:serre}, with the choice $h_0=f$ and $\tilde{h}_t$ defined by:
\be \tilde{h}_t(z)=J(f(z))(1-t)+t(s-\epsilon), \quad z\in D^n.\ee

\end{proof}

\begin{cor}[Serre's Theorem]\label{cor:serre}Let $x,y$ be any two point on a \emph{compact} subriemannian manifold whose singular curves are all soft. Then there are infinitely many normal geodesics joining $x$ and $y$.
\end{cor}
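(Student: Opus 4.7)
The plan follows a standard Lusternik--Schnirelmann minimax scheme, adapted to the subriemannian setting via the deformation lemma proved immediately before this corollary. By the theorem of Boarotto--Lerario recalled at the beginning of the introduction, $\Omega(y)$ is homotopy equivalent to the standard (unconstrained) $W^{1,2}$ path space of $M$ from $x$ to $y$, which in turn has the homotopy type of the classical space of continuous paths. Since $M$ is compact, Serre's classical theorem provides, for a suitable coefficient field $\mathbb{F}$, integers $n_{k}\to\infty$ and non-zero classes $\alpha_{k}\in H_{n_{k}}(\Omega(y);\mathbb{F})$; represent each $\alpha_{k}$ by a continuous map $\sigma_{k}:Z_{k}\to\Omega(y)$ from a compact simplicial complex of dimension $n_{k}$.

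For each $k$ set
\[
c_{k}\;=\;\inf_{\sigma\sim\sigma_{k}}\,\max_{\theta\in Z_{k}}\,J(\sigma(\theta)).
\]
The value $c_{k}$ is strictly positive: when $x\neq y$ this is trivial, and when $x=y$ it follows from the Proposition immediately after Theorem \ref{thm:minmax}, since that Proposition forbids cycles of sufficiently small maximum energy from representing a non-trivial class. By Theorem \ref{thm:minmax} applied at $c_{k}$, for every $\delta>0$ there is a normal geodesic of energy in $[c_{k}-\delta,c_{k}+\delta]$. Assume, toward a contradiction, that only finitely many normal geodesics $\gamma_{1},\ldots,\gamma_{N}$ join $x$ to $y$, and let $\mathcal{E}=\{J(\gamma_{1}),\ldots,J(\gamma_{N})\}$. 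Since $\mathcal{E}$ is finite (hence closed), each $c_{k}$ must belong to $\mathcal{E}$, and the pigeonhole principle yields a value $E^{\star}\in\mathcal{E}$ with $c_{k}=E^{\star}$ for infinitely many $k$; fix any such $k$.

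To obtain the contradiction, I plan to invoke the dimension-graded deformation lemma preceding this corollary with $s=E^{\star}$ and $n=n_{k}$. Write $\Gamma=\mathcal{C}\cap\{J=E^{\star}\}$ for the finite set of normal geodesics at that energy, and let $\epsilon=\epsilon(n_{k})>0$ be the constant furnished by the lemma. Pick $\sigma\sim\sigma_{k}$ with $\max_{\theta}J(\sigma(\theta))<E^{\star}+\epsilon/2$, which exists because $c_{k}=E^{\star}$. Under the softness hypothesis, the decomposition $\Omega=\mathcal{R}\cup\mathcal{C}\cup\mathcal{A}$ recalled in Subsection \ref{subsect:genericprop} is disjoint, so every $\gamma\in\Gamma$ is a regular point of the endpoint map $F$; hence $\Omega(y)=F^{-1}(y)$ is a smooth Hilbert submanifold in a neighborhood of each such $\gamma$. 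Since $Z_{k}$ is finite-dimensional and each $\gamma\in\Gamma$ has infinite codimension in the ambient local Hilbert chart, a standard $C^{0}$-small transversality perturbation, carried out inside $\Omega(y)^{E^{\star}+\epsilon}$, produces a map $\sigma'\sim\sigma$ whose image avoids some open neighborhood $\mathcal{M}$ of $\Gamma$. The deformation lemma then homotopes $\sigma'$ through maps into $\Omega(y)^{E^{\star}+\epsilon}\setminus\mathcal{M}$ to a map $\sigma''$ with $\sigma''(Z_{k})\subset\Omega(y)^{E^{\star}-\epsilon}$, still representing $\alpha_{k}$. This contradicts $c_{k}=E^{\star}$.

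The main obstacle is the push-off step: one must homotope a finite-dimensional cycle inside a given sublevel set so as to miss a small neighborhood of each of the finitely many normal geodesics in $\Gamma$. Its validity hinges essentially on the softness hypothesis, which ensures that every normal geodesic is $F$-regular and therefore sits in a genuinely smooth Hilbert-manifold part of $\Omega(y)$, where the infinite codimension of a single point makes the $C^{0}$-small perturbation routine; the continuity of $J$ then guarantees that the perturbed map remains in $\Omega(y)^{E^{\star}+\epsilon}$. Once this step is granted, the remaining ingredients -- the Serre topological input, the formation of the minimax values, the application of Theorem \ref{thm:minmax}, and the final use of the dimension-graded deformation lemma -- fit into the standard pattern of critical point theory, now transferred to the subriemannian setting by the tools built up in the preceding sections.
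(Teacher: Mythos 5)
Your overall skeleton (minimax values attached to homology classes, pigeonhole on a finite set of energies, then a deformation to contradict the minimax characterization) resembles the classical scheme, but the step you yourself flag as the main obstacle --- the push-off --- is a genuine gap, and it cannot be repaired in the form you propose. The deformation lemma preceding the corollary has the quantifier order: the neighborhood $\mathcal{M}$ of $\mathcal{C}\cap\{J=s\}$ is given \emph{first}, and only then does one obtain $\epsilon=\epsilon(n)$, which depends on $\mathcal{M}$ (through the Palais--Smale constant $\eta$ of Corollary \ref{cor:PSL} relative to the excised set and the separation parameters entering Proposition \ref{prop:def}). You instead fix $\epsilon$, pick $\sigma\sim\sigma_k$ with $\max J<E^{\star}+\epsilon/2$, and then perturb $\sigma$ so that it avoids ``some'' neighborhood $\mathcal{M}$ of $\Gamma$. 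But a $C^{0}$-small perturbation only clears a neighborhood whose size depends on the perturbed cycle (namely, its distance to $\Gamma$), and for that arbitrarily small $\mathcal{M}$ the lemma returns a new, much smaller $\epsilon(\mathcal{M},n_k)$ which your cycle has no reason to respect; conversely, if $\mathcal{M}$ is fixed of definite size first, there is in general no $\sigma\sim\sigma_k$ with $\max J\le E^{\star}+\epsilon(\mathcal{M},n_k)$ avoiding $\mathcal{M}$. Indeed your argument, if correct, would apply verbatim in the smooth Riemannian case to a minimax level carried by a single nondegenerate geodesic of index $n_k$ (a single point, of infinite codimension in the path space) and would push a near-optimal cycle strictly below that level, contradicting the definition of $c_k$. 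The obstruction is not the critical point itself but its stable set: once the energy margin is small relative to a fixed neighborhood, every near-optimal cycle of the relevant dimension must meet that neighborhood, and this is exactly what the dependence of $\epsilon$ on $\mathcal{M}$ encodes. This also explains why ``finitely many geodesics at level $E^{\star}$'' is not by itself contradictory, and why the infinitude of the degrees $n_k$ plays no real role in your argument (a single $k$ would already yield your contradiction --- a sign that it proves too much).

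The paper's route is different precisely at this point: it follows the Lusternik--Schnirelmann argument of \cite[Lemma 3.1]{Chang}, using that the cup-length of $\Omega(y)$ is infinite (Serre) and the subordination of the corresponding minimax classes; when such minimax values coincide, the conclusion is not reached by pushing the cycle off the critical set, but by showing that the critical set at that level is topologically nontrivial (of positive category), hence infinite. The new deformation lemma is used only to replace the classical deformation below a level in the presence of the soft abnormal set. There is also a secondary gap in your topological input: Serre's theorem producing nonzero homology in infinitely many degrees requires $M$ simply connected (it fails, e.g., for a torus, whose path-space components carry no homology in positive degrees). The paper first splits according to $\pi_1(M)$: for infinite fundamental group it runs a minimax in each homotopy class, and for finite fundamental group it passes to the compact universal cover; your proposal skips this reduction.
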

\begin{proof}The scheme of the proof is identical to the classical proof (see for example \cite[Section 3.2]{Chang}), except that here we have to work with the (possibly singular) space $\Omega(y)$ of horizontal curves joining $x$ and $y$ and with the functional $J:\Omega(y)\to \R$. The main difference is to replace the classical deformation lemma with the one above. We only give a sketch, leaving the details to the reader.

In the case the fundamental group of $M$ is infinite, it is enough to apply a Minimax procedure on each homotopy class (as it is done in \cite{LerarioMondino}); otherwise one passes to the universal cover, which is locally isometric to the manifold itself, and proves the statement for the case $M$ is compact and simply connected.

The homotopy of $\Omega(y)$ is the same as the homotopy of the ordinary path-space (in fact the inclusion is a homotopy equivalence, see \cite{BoarottoLerario}), and under the assumption that $M$ is compact and simply connected the cup-length of $\Omega(y)$ is infinite \cite{Serre}.
The theorem follows then by applying the same argument as in the proof of \cite[Lemma 3.1]{Chang}, with the following modification. Using the notation of \cite[Lemma 3.1, pag. 106]{Chang}, the simplicial set $|\tau_1|$ is deformed below the level $c_2-\epsilon$ using the previous Lemma.
\end{proof}

\end{document}